\newtheorem{theorem}{Theorem}[section]
\newtheorem{lemma}[theorem]{Lemma}
\theoremstyle{definition}
\newtheorem{definition}[theorem]{Definition}
\newtheorem{remark}[theorem]{Remark}
\newcommand{\norm}[1]{\left\Vert#1\right\Vert}
\newcommand{\tvh}[1]{\left<#1\right>}
\newcommand{\tap}[1]{\left\{#1\right\}}
\numberwithin{equation}{section}
\begin{document}
	\font\nho=cmr10
	\def\dive{\mathrm{div}}
	\def\cal{\mathcal}
	\def\L{\cal L}

	\def \ud{\underline }
	\def\id{{\indent }}
	\def\f{\frac}
	\def\non{{\noindent}}
	\def\le{\leqslant} 
	\def\leq{\leqslant} 
	\def\rar{\rightarrow}
	\def\Rar{\Rightarrow}
	\def\ti{\times}
	\def\i{\mathbb I}
	\def\j{\mathbb J}
	\def\si{\sigma}
	\def\Ga{\Gamma}
	\def\ga{\gamma}
	\def\ld{{\lambda}}
	\def\Si{\Psi}
	\def\f{\mathbf F}
	\def\r{\hro{R}}
	\def\e{\cal{E}}
	\def\B{\cal B}
	\def\A{\mathcal{A}}
	\def\p{\mathbb P}
	
	\def\tet{\theta}
	\def\Tet{\Theta}
	\def\hro{\mathbb}
	\def\ho{\mathcal}
	\def\P{\ho P}
	\def\E{\mathcal{E}}
	\def\n{\mathbb{N}}
	\def\M{\mathbb{M}}
	\def\dMu{\mathbf{U}}
	\def\dMcs{\mathbf{C}}
	\def\dMcu{\mathbf{C^u}}
	\def\vk{\vskip 0.2cm}
	\def\td{\Leftrightarrow}
	\def\df{\frac}
	\def\Wei{\mathrm{We}}
	\def\Rey{\mathrm{Re}}
	\def\s{\mathbb S}
	\def\l{\mathcal{L}}
	\def\C+{C_+([t_0,\infty))}
	\def\o{\cal O}
\title[Periodic solutions for Boussinesq systems]{Periodic solutions for Boussinesq systems in weak-Morrey spaces}

\author[P.T. Xuan]{Pham Truong Xuan}
\address{Pham Truong Xuan \hfill\break Faculty of Pedagogy, VNU University of
Education, Vietnam National University, 144 Xuan Thuy, Cau Giay, Hanoi, Viet Nam}
\email{phamtruongxuan.k5@gmail.com or ptxuan@vnu.edu.vn}

\author[N.T. Van]{Nguyen Thi Van}
\address{Nguyen Thi Van\hfill\break
	Department of Mathematics, Faculty of Information Technology, Thuyloi University, 175 Tay Son, Dong Da, Ha Noi, Viet Nam}
\email{van@tlu.edu.vn}

\author[T.V. Thuy]{ Tran Van Thuy}
\address{ Tran Van Thuy\hfill\break
East asia university of Technology, Polyco building, Trinh Van Bo, Nam Tu Liem, Ha Noi, Viet Nam}
\email{thuyhum@gmail.com or thuytv@eaut.edu.vn}

\begin{abstract}
We prove the existence and polynomial stability of periodic mild solutions for Boussinesq systems in critical weak-Morrey spaces for dimension $n\geqslant3$. Those systems are derived via the Boussinesq approximation and describe the movement of an incompressible viscous fluid under natural convection filling the whole space $\mathbb{R}^{n}$. Using certain dispersive and smoothing properties of heat semigroups on Morrey-Lorentz spaces as well as Yamazaki-type estimate on block spaces, we prove the existence of bounded mild solutions for the linear  {systems} corresponding to the Boussinesq systems. Then, we establish a Massera-type theorem to obtain the existence and uniqueness of periodic solutions to corresponding linear  {systems} on the half line time-axis by using a mean-ergodic method. Next, using fixed point arguments, we can pass from linear  {systems} to prove the existence uniqueness and polynomial stability of such solutions for Boussinesq systems. Finally, we apply the results to Navier-Stokes equations.

\end{abstract}
\subjclass{[2010]35A01, 35B10, 35B65, 35Q30, 35Q35, 76D03, 76D07}
\keywords{Boussinesq  systems, Convection problem, Periodic mild solution, Bilinear estimate, Morrey-Lorentz spaces}
\maketitle

\tableofcontents

\font\nho=cmr10

\section{Introduction}
We are concerned with the incompressible Boussinesq system in the whole space%
\begin{equation}
\left\{
\begin{array}
[c]{rll}%
u_{t}-\Delta u+(u\cdot\nabla)u+\nabla p\!\!\!\! & =\kappa\theta g + \dive F\quad &
x\in\mathbb{R}^{n},\,t>0,\hfill\\
\operatorname{div}u\!\! & =\;0\quad & x\in\mathbb{R}^{n},\,t\geq0,\\
\theta_{t}-\Delta\theta+(u\cdot\nabla)\theta\!\! & =\; \dive f\quad & x\in
\mathbb{R}^{n},\,t>0,\\
u(x,0)\!\! & =\;u_{0}(x)\quad & x\in\mathbb{R}^{n},\\
\theta(x,0)\!\! & =\;\theta_{0}(x)\quad & x\in\mathbb{R}^{n},
\end{array}
\right.  \label{BouEq}%
\end{equation}
where $n\geqslant3$, the unknowns are $u(\cdot,t): \mathbb{R}^n\to \mathbb{R}^n $, $p(\cdot,t): \mathbb{R}^n\to \mathbb{R}$, and 
$\theta(\cdot,t):\mathbb{R}^n\to \mathbb{R}$ representing the velocity field, the pressure and the temperature of the fluid, respectively. The constant $\kappa>0$ is the volume expansion
coefficient and the field $g$ represents a generalization of the gravitational field on $\mathbb{R}^n$.  Here, we have considered the forms of the external force and the reference temperature in the system (1.1) by $\dive F$ (for $F$ is a second order tensor) and $\dive f$ (for $f$ is a vector field), respectively. These are arised from technical issues in this paper.
Note that, the divergence forms of external forces were also used in the previous works for the Navier-Stokes equations in \cite{GaSho,Ya2000} and for the Boussinesq system in \cite{HuyXuan2022}.
If we consider the zero-temperature case, i.e., $\theta=0$, then the system
(\ref{BouEq}) becomes the Navier-Stokes equations.

The  system (\ref{BouEq}) describe the movement of an incompressible viscous
fluid under the effect of natural convection filling the whole space
$\mathbb{R}^{n}$, by assuming the so-called Boussinesq approximation in which
density variations are considered only in the coupling term $\kappa\theta g$
via a buoyancy-type force (see, e.g., \cite{Chandra},\cite{Fi1969}). Moreover,
by relaxing and considering some suitable variations in that approximation,
generalizations of (\ref{BouEq}) appear in a natural way (see \cite{Ca1980}%
,\cite{Fi1969}). Thus, we also consider more general forms of $g$, including
time-dependent cases (see, e.g., \cite{Ca1980},\cite{Mo1991}), but without
losing sight of relevant cases corresponding to time-independent fields
(Remark \ref{Rem-Theo-1} $i.$).

System (\ref{BouEq}) presents the scaling
\begin{equation}
(u,\theta)\rightarrow(u_{\lambda},\theta_{\lambda}),\text{ for }\lambda>0,
\label{Scal-1}%
\end{equation}
where $(u_{\lambda},\theta_{\lambda})=\lambda(u(\lambda x,\lambda^{2}%
t),\theta(\lambda x,\lambda^{2}t))$ is the solution of (\ref{BouEq}) with the field
$g_{\lambda}(x,t)=\lambda^{2}g(\lambda x,\lambda^{2}t)$ and the initial data
$(u_{0\lambda},\theta_{0\lambda})=\lambda(u_{0}(\lambda x),\theta_{0}(\lambda
x))$ in place of $g=g(x,t)$ and $(u_{0},\theta_{0}),$ respectively. Then,
assuming the correct homogeneity for $g$ and $(u_{0},\theta_{0})$, say
$g(x,t)=\lambda^{2}g(\lambda x,\lambda^{2}t)$ and $(u_{0\lambda},\theta_{0\lambda}%
)=\lambda(u_{0}(\lambda x),\theta_{0}(\lambda x))$, it follows that
$(u_{\lambda},\theta_{\lambda})$ satisfies (\ref{BouEq}) for each $\lambda>0$
provided that $(u,\theta)$ does so. Moreover, the map (\ref{Scal-1}) induces
the following initial-data scaling
\begin{equation}
(u_{0}(x),\theta_{0}(x))\rightarrow\lambda(u_{0}(\lambda x),\theta_{0}(\lambda
x)). \label{Scal-2}%
\end{equation}
Although $u$ is a vector field and $\theta$ is a scalar field, throughout this paper, we
denote their spaces in the same way. Additionally, the divergence-free
condition is assumed for each element in the corresponding space of $u$.
Sometimes, for clarity, we may use the notation $X^{\sigma}$ to denote the
space of all $u:\mathbb{R}^{n}\rightarrow\mathbb{R}^{n}$ such that $u\in X$
and $\nabla\cdot u=0$ in the sense of tempered distributions. In view of
(\ref{Scal-2}), we say that the Banach space $X$ is critical for the Boussinesq system (\ref{BouEq}) whether its norm is invariant under \eqref{Scal-2} in the sense that $\norm{u_{0}}_{X^\sigma} \simeq \lambda\norm{u_{0\lambda}}_{X^\sigma}$ and $\norm{\theta_{0}}_{X} \simeq \lambda\norm{\theta_{0\lambda}}_{X}$, for all $u_{0}\in X^{\sigma}$ and $\theta
_{0}\in X$.  Here, the notation ``$\simeq$'' means the equivalence between two sides by multiplying a positive constant independent to $\lambda$. 

  The  Boussinesq system have been studied by several authors and  by using some approaches and techniques. To illustrate that, but without claiming
to make a complete list due to the extensive literature of (\ref{BouEq}), we
only review some of relevant works. In pioneering work, Fife and Joseph
\cite{Fi1969} provided one of the first rigorous mathematical results for the
convection problem by constructing analytic stationary solutions for
(\ref{BouEq}) with bounded fields $g$, as well as analyzing some stability and
bifurcation properties. After, Cannon and DiBenedetto \cite{Ca1980}
established the local-in-time existence in the class $L^{p}(0,T;L^{q}%
(\mathbb{R}^{n}))$ with suitable  numbers  $p,q$ and a more general coupling term
$f(\theta,x,t)$ in R.H.S. of (\ref{BouEq}), which is Lipschitz
continuous in a suitable $L^{p}(0,T;L^{q})$-sense w.r.t. $\theta$, covering in
particular $f(\theta,x,t)=\kappa\theta g$ with unbounded fields $g(x,t)$. They
also obtained global-in-time solutions under smallness conditions on the
initial data and $f$. Considering a bounded domain of $\mathbb{R}^{3}$ (see
also \cite{Mo1991}), Hishida \cite{Hi1995} obtained  the  existence and exponential
stability of global-in-time strong solutions for (\ref{BouEq}) near to the
steady state. Later, by using estimates in weak-$L^{p}$ spaces (i.e.,
$L^{p,\infty}$-$L^{q,\infty}$-type estimates) of the semigroup $e^{-tL}$
associated with the corresponding linear equation of (\ref{BouEq}),
Hishida \cite{Hi1997} showed the existence and large-time behavior of
global-in-time strong solutions in an exterior domain of $\mathbb{R}^{3}$
under smallness assumptions on the initial data $(u_{0},\theta_{0})$.
Well-posedness of time-periodic small solutions in exterior domains was proved
in \cite{Na2020} by employing frameworks in weak-$L^{p}$ spaces. The
existence and stability of global small mild solutions for (\ref{BouEq}) and the corresponding stationary Boussinesq
 system  were studied in weak-$L^{p}$ spaces by \cite{Fe2006} and
\cite{Fe2010}, respectively. The results of \cite{Fe2006} were extended for the initial data which belongs to product of Morrey spaces $\mathcal{M}^\sigma_{p,n-p}\times \mathcal{M}_{p,n-p}$ in \cite{Al2011}. A stability result in $B_{2,1}^{3/2}\times
\dot{B}_{2,1}^{-1/2},$ under small perturbations, for a class of global large
$H^{1}$- solutions was proved by \cite{Liu2014}. In the whole space
$\mathbb{R}^{3}$, Brandolese and Schonbek \cite{Br2012} obtained results on the
existence and time-decay of weak solutions for (\ref{BouEq}) with the initial data
$(u_{0},\theta_{0})\in L^{2}\times L^{2}$. Li and Wang \cite{Li-Wang2021}
analyzed (\ref{BouEq}) in the torus $\mathbb{T}^{3}$ and obtained an
ill-posedness result in $\dot{B}_{\infty,\infty}^{-1}\times\dot{B}%
_{\infty,\infty}^{-1}$ by showing the so-called norm inflation phenomena. Komo
\cite{Komo2015} analyzed (\ref{BouEq}) in general smooth domains
$\Omega\subset$ $\mathbb{R}^{3}$ and obtained uniqueness criteria for strong
solutions in the framework of Lebesgue time-spatial mixed spaces
$L^{p}(0,T;L^{q}(\Omega))$ by assuming $(u_{0},\theta_{0})\in L^{2}\times
L^{2}$ and $g\in L^{8/3}(0,T;L^{4}(\Omega))$. Considering the case of a
constant field $g$, Brandolese and He \cite{Br2020} showed the uniqueness of
mild solutions in the class $(u,\theta)\in C([0,T],L^{3}(\mathbb{R}^{3})\times
L^{1}(\mathbb{R}^{3}))$ with $\theta\in L_{loc}^{\infty}((0,T);L^{q,\infty
}(\mathbb{R}^{3}))$. They also obtained the uniqueness property for $u\in
C([0,T],L^{3}(\mathbb{R}^{3}))$ with $\theta\in L_{loc}^{\infty}%
((0,T);L^{q,\infty}(\mathbb{R}^{3})),$ $\sup_{0<t<T}t^{\frac{3}{2}%
(1-1/q)}\left\Vert \theta(\cdot,t)\right\Vert _{L^{q,\infty}}<\infty$ and
$\theta_{0}\in B_{L^{q,\infty},\infty}^{-3(1-1/q)}.$ For existence and
uniqueness results in the partial inviscid cases of (\ref{BouEq}), we   quote  \cite{Danchin2009}, \cite{Danchin2008} and their references, where the authors
explored different kinds of conditions on the initial data $(u_{0},\theta
_{0})$ involving $L^{p},$ $L^{p,\infty}$ (weak-$L^{p}$) and Besov spaces.
The unconditional uniqueness results for mild solutions of the Boussinesq system
(and also the Navier-Stokes equations) were obtained on weak-Lorentz spaces in some previous works \cite{Le2002, Ya2000} and recently in weak-Morrey spaces by Ferreira and Pham in \cite{Fe2016, FeXuan2023}.

The existence of time periodic solutions to  the Boussinesq system \eqref{BouEq} in bounded and exterior domains has been established in \cite{HuyXuan2022,Na2020,Vi2010}, respectively. On one hand, the authors in \cite{Na2020,Vi2010} considered the Boussinesq system \eqref{BouEq} in the product of weak-$L^n$ spaces, i.e., $\mathbb{L}_\sigma^{n,\infty}(\Omega)\times L^{n,\infty}(\Omega)$, where $\Omega\subset \mathbb{R}^n\, (n \geq 3)$ is an exterior domain. 
They  exploited the interpolation properties and Kato's iteration scheme to construct   periodic mild solutions on whole line time-axis $\mathbb{R}$ to \eqref{BouEq} under certain conditions on the boundedness of the temperature term and of the external force in suitable weak-Lorentz spaces. On the other hand, the authors in \cite{HuyXuan2022} considered the system \eqref{BouEq} on only a half line time-axis $\r_+$ and proved the existence, uniqueness and stability of a periodic solution to \eqref{BouEq} in this case. They used the $L^p-L^q$ smoothness, duality estimates and interpolation functors to prove the existence of bounded mild solutions for the {linear Boussinesq system} in $\mathbb{L}_\sigma^{n,\infty}(\Omega)\times L^{n,\infty}(\Omega)$. Then they invoked Massera's principle to construct an initial datum which guarantees the existence of a periodic solution for  corresponding  linear  system  to \eqref{BouEq}. Using results obtained for linear  system  and fixed point arguments, they  established the existence and stability results for \eqref{BouEq} in $\mathbb{L}_\sigma^{n,\infty}(\Omega)\times L^{n,\infty}(\Omega)$. 

 {The homogeneous weak-Morrey space $\mathcal{M}_{p,\infty,\lambda}$ (for $0<p<\infty$ and $0<\lambda<n$) belongs to a very singularity class of the family of Lorentz-Morrey spaces. The space $\mathcal{M}_{p,\infty,\lambda}$ appeared in some previous works with difference frameworks such as: potential analysis \cite{Adams,Al2011},  fluid dynamics \cite{Fe2016}, the optimal regularity of solutions to elliptic equations \cite{Phuc} and harmonic analysis \cite{Ragusa}. Besides, the space $\mathcal{M}_{p,\infty,\lambda}$ was also studied in some other works \cite{Al2013,Al2022,Hatano20,Hatano22}.
Therefore, the weak-Morrey space plays an important role in many areas of analysis.
In this paper, we continue to study fluid dynamics in this functional space.} In particular,  {inspired} from \cite{Fe2016,FeXuan2023,HuyXuan2022}, we study the existence, uniqueness and polynomial stability of periodic mild solutions for  {the} Boussinesq system \eqref{BouEq} on half line time-axis and in suitable weak-Morrey spaces. We establish the well-posedness of the system \eqref{BouEq} in $BC(\mathbb{R}_+,\, \mathcal{M}_{p,\infty,\lambda}^{\sigma}(\mathbb{R}^{n})\times\mathcal{M}_{p,\infty,\lambda}(\mathbb{R}^{n}))$ with the initial data $(u_0,\theta_0)$ which belongs to $\mathcal{M}_{p,\infty,\lambda}^{\sigma}(\mathbb{R}^{n})\times\mathcal{M}_{p,\infty,\lambda}(\mathbb{R}^{n})$, where $\lambda=n-p$. First, we use Duhamel's principle to give the integral formulation of system \eqref{BouEq} (see Subsection \ref{MatrixEqs}).
Then, we use linear and bilinear estimates \eqref{linearEs} and \eqref{bestimate} (which have been proven by using dispersive and Yamazaki-type estimates) to establish the existence of bounded mild solutions for corresponding linear  {system} to \eqref{BouEq} in Theorem \ref{Theo-uniq}. Next, we prove a Massera-type principle for the existence  {of an initial data,} which provides the periodic solution for linear equation in Theorem \ref{PeriodicLinearCasse} by  {exploiting} the limit of Ces\`aro sum, predual of weak-Morrey spaces and Banach-Alaoglu's theorem. The existence and polynomial stability of periodic solutions for  {the} Boussinesq system \eqref{BouEq} (see Theorem \ref{wellposed} and \ref{stability}) are based on a combination of fixed point arguments and some bilinear estimates \eqref{BBestimate}.
Since the weak-Morrey space $\mathcal{M}_{p,\infty,n-p}(\mathbb{R}^{n})$ are larger than weak-Lorentz space $L^{n,\infty}(\mathbb{R}^n)$,  {our stability results} extend the ones obtained in previous works \cite{Al2011,Fe2006,Fe2010,HuyXuan2022}. The new results in this paper also imply the similar ones for the Navier-Stokes equations when we consider the temperature function is zero (see for example refs. \cite{Huy2014,Ya2000} for the existence of periodic solutions for  to Navier-Stokes equations).  

This paper is organized as follows: Section \ref{S2} provides some preliminaries about Lorentz, Lorentz-Morrey spaces and the preduals of Lorentz-Morrey spaces, and we refer (\ref{BouEq}) in a suitable matrix integral form; Section \ref{S3} relies on linear estimates and the existence of periodic mild solutions for the inhomogeneous linear  {system} corresponding to \eqref{BouEq}; in Section \ref{S4}, we prove the existence and polynomial stability for periodic mild solutions of Boussinesq system \eqref{BouEq} by using results in Section \ref{S3} and fixed point arguments and we revisit the results obtained for Navier-Stokes equations. \\
{\bf Notations.}\\
$\bullet$ We denote the space of bounded and continuous functions with values in the Banach space $X$ by
$$ {BC}(\mathbb R_+,X)=\{h: \mathbb R_+ \to X|\; h \text{ is continuous and } \sup_{t\in \mathbb R_+}\|h(t)\|_X <\infty \},$$
endowed with the norm $\|h\|_{ {BC}(\mathbb R_+,X) }:=\sup\limits_{t\in \mathbb R_+}\|h(t)\|_X$.\\
$\bullet$  We denote the norm on Lorentz-Morrey space $\mathcal{M}_{p,q,\lambda}$ by $\left\Vert f\right\Vert _{p,q,\lambda}=\left\Vert f\right\Vert _{\mathcal{M}_{p,q,\lambda}}$.\\
$\bullet$  {We denote the Cartesian product spaces: $\mathcal{M}^\sigma_{p,\infty,\lambda} (\mathbb R^n) \times  \mathcal{M}_{p,\infty,\lambda}(\mathbb R^n)$ by ${\bf X}_{p,\lambda}$ and $\mathcal{M}^\sigma_{q,\infty,\mu}(\mathbb{R}^n)\times \mathcal{M}_{r,\infty,\nu}(\mathbb{R}^n)$ by ${\bf X}_{q,\mu;r,\nu}$. Throughout this paper, we use the norms $$\left\Vert 
	\begin{bmatrix}
		u\\ \theta
	\end{bmatrix}
	\right\Vert _{{\bf X}_{p,\lambda}} := \norm{u}_{ \mathcal M^\sigma_{p,\infty,\lambda}}+ \norm{\theta}_{\mathcal M_{p,\infty,\lambda}}$$
and
$$\norm{\begin{bmatrix} u\\ \theta
	\end{bmatrix}}_{{\bf X}_{q,\mu;r,\nu}}: = \norm{u}_{\mathcal M^\sigma_{q,\infty,\mu}} + \norm{\theta}_{\mathcal{M}_{r,\infty,\nu}}$$	
for the spaces ${\bf X}_{p,\lambda}$ and ${\bf X}_{q,\mu;r,\nu}$, respectively.}\\
$\bullet$ We denote $|||g|||_{\beta,b}=\sup\limits_{t>0}t^{\beta}\left\Vert g(\cdot,t)\right\Vert _{b,\infty,\lambda}$ for $\beta >0$.

\section{Preliminaries}\label{S2}
This section is devoted to some preliminaries on Lorentz spaces, Lorentz-Morrey spaces, their preduals, block spaces and basic estimates that
will be useful in the next section. For further details on those subjects, the
reader is referred to \cite{Adams,Fe2016,FeXuan2023,Graf2004}.
\subsection{Lorentz and Lorentz-Morrey spaces} 
Consider $\Omega\subset\mathbb{R}^{n}$ and the rearrangement function
$f^{\ast}(t)=\inf\left\{  s>0:m(\left\{  x\in\Omega:|f(x)|>s\right\}
)\leqslant t\right\}  ,$ for $t>0,$ where $m$ denotes  the Lebesgue measure in
$\mathbb{R}^{n}$. A measurable function $f:\Omega\rightarrow\mathbb{R}$
belongs to the Lorentz space $L^{p,q}(\Omega)$ if the norm
\begin{equation}
\left\Vert f\right\Vert _{L^{p,q}}=%
\begin{cases}
\left[  \int_{0}^{\infty}\left(  t^{\frac{1}{p}}[f^{\ast\ast}(t)]\right)
^{q}\frac{dt}{t}\right]  ^{\frac{1}{q}}, & 1<p<\infty,1\leq q<\infty,\\
\displaystyle\sup_{t>0}t^{\frac{1}{p}}[f^{\ast\ast}(t)], & 1<p\leq
\infty,\,q=\infty,
\end{cases}
\label{almost_norm}%
\end{equation}
is finite, where $f^{\ast\ast}(t)=\frac{1}{t}\int_{0}^{t}f^{\ast}(s)ds$ is the
double-rearrangement of $f$. The pair $(L^{p,q}(\Omega),\left\Vert
\cdot\right\Vert _{p,q})$ is a Banach space. Particularly, we have
$L^{p}(\Omega)=L^{p,p}(\Omega)$, and $L^{p,\infty}$ is the weak-$L^{p}$ space.

A natural generalization of Morrey spaces $\mathcal{M}_{p,\lambda}$ is the
so-called Lorentz-Morrey spaces, namely Morrey spaces based on Lorentz spaces.
For that, denote the open ball $D(a,\rho)=\left\{  x\in\mathbb{R}%
^{n};\left\vert x-a\right\vert <\rho\right\}  $ with $a\in\mathbb{R}^{n}$ and
$\rho>0$. The Morrey-Lorentz space $\mathcal{M}_{p,q,\lambda}:=\mathcal{M}%
_{p,q,\lambda}(\mathbb{R}^{n})$ is the class of all $f\in L_{loc}%
^{p,q}(\mathbb{R}^{n})$ satisfying%
\begin{equation}
\left\Vert f\right\Vert _{p,q,\lambda}:=\left\Vert f\right\Vert _{\mathcal{M}%
_{p,q,\lambda}}=\sup_{x_{0}\in\mathbb{R}^{n},\rho>0}\rho^{-\frac{\lambda}{p}%
}\left\Vert f\right\Vert _{L^{p,q}(D(x_{0},\rho))}<\infty, \label{NormLM}%
\end{equation}
where the quantity $\left\Vert \cdot\right\Vert _{p,q,\lambda}$ defines a norm
in $\mathcal{M}_{p,q,\lambda}$. The space $\mathcal{M}_{p,q,\lambda}$ endowed
with $\left\Vert \cdot\right\Vert _{p,q,\lambda}$ is a Banach space. In the
case $p=q$ and $q=\infty$, we have $\mathcal{M}_{p,p,\lambda}=\mathcal{M}%
_{p,\lambda}$ and the weak-Morrey space $\mathcal{M}_{p,\infty,\lambda}$.
Moreover, we have the scaling $\left\Vert f(cx)\right\Vert _{\mathcal{M}%
_{p,q,\lambda}}=c^{-\tau_{p,\lambda}}\left\Vert f(x)\right\Vert _{\mathcal{M}%
_{p,q,\lambda}}$, for all $c>0$, where $\tau_{p,\lambda}=\dfrac{n-\lambda}%
{p}.$

We have the inclusion (see \cite{Fe2016}):
$$\mathcal{M}_{p_2,q_2,\lambda}\hookrightarrow \mathcal{M}_{p_1,q_1,\mu},$$
for $0\leq \lambda,\mu <n,$ $1\leq p_1\leq p_2 \leq \infty, \, 1\leq q_2 \leq q_1 \leq \infty$ and $\frac{n-\mu}{p_1}=\frac{n-\lambda}{p_2}$.
Comparing with more standard critical spaces, we have the continuous inclusions (see \cite{Al2011,BeLo,KoNa1994}):
$$L^n(\mathbb{R}^n)\hookrightarrow L^{n,\infty}(\mathbb{R}^n)\hookrightarrow \mathcal{M}_{p,n-p}(\mathbb{R}^n)\hookrightarrow \mathcal{M}_{p,\infty,n-p}(\mathbb{R}^n).$$

In what follows, we recall H\"{o}lder inequality and a heat estimate in the
$\mathcal{M}_{p,q,\lambda}$-setting (see \cite{Fe2016}).

\begin{lemma}\label{heatestimate}
(i) Let $1<p_{0},p_{1},r\leqslant\infty$ and $0\leqslant\beta,\lambda
_{0},\lambda_{1}<n$ satisfy $\dfrac{1}{r}=\dfrac{1}{p_{0}}+\dfrac{1}{p_{1}}$
and $\dfrac{\beta}{r}=\dfrac{\lambda_{0}}{p_{0}}+\dfrac{\lambda_{1}}{p_{1}},$
and let $s\geqslant1$ be such that $\dfrac{1}{q_{0}}+\dfrac{1}{q_{1}}%
\geqslant\dfrac{1}{s}$. Then, the following inequality holds
\begin{equation}
\left\Vert fg\right\Vert _{r,s,\beta}\leqslant C\left\Vert f\right\Vert
_{p_{0},q_{0},\lambda_{0}}\left\Vert g\right\Vert _{p_{1},q_{1},\lambda_{1}},
\label{HolderWM}%
\end{equation}
where $C>0$ is a constant.

(ii) Let $m\in\left\{  0\right\}  \cup\mathbb{N}$, $1<p,r\leqslant\infty$,
$1\leqslant q\leqslant d\leqslant\infty$, $0\leqslant\lambda,\mu<\infty$ and
$\tau_{r,\mu}=\dfrac{n-\mu}{r}\leqslant\tau_{p,\lambda}=\dfrac{n-\lambda}{p}$.
Assume also that $\lambda=\mu$ when $p\leqslant r$. Then, we have the
estimate
\begin{equation}
\left\Vert \nabla_{x}^{m}e^{t\Delta}\varphi\right\Vert _{r,d,\mu}\leqslant
Ct^{-\frac{m}{2}-\frac{1}{2}(\tau_{p,\lambda}-\tau_{r,\mu})}\left\Vert
\varphi\right\Vert _{p,q,\lambda}, \label{disp}%
\end{equation}
for all $\varphi\in\mathcal{M}_{p,q,\lambda},$ where $C>0$ is a constant.
\end{lemma}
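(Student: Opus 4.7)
Both parts reduce to well-known estimates on Lorentz spaces, combined with the definition of the Morrey-Lorentz norm as a weighted supremum over balls. Two arithmetic identities do the bookkeeping: in (i), the scaling identity $\beta/r=\lambda_0/p_0+\lambda_1/p_1$ splits the Morrey weight $\rho^{-\beta/r}$ compatibly with the Lorentz Hölder indices; in (ii), the condition $\tau_{r,\mu}\leqslant\tau_{p,\lambda}$ forces the time exponent, and the hypothesis $1\leqslant q\leqslant d\leqslant\infty$ is compatible with real interpolation on the second index.

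\textbf{Plan for (i).} Fix a ball $D(x_0,\rho)$ and apply O'Neil's Lorentz Hölder inequality
\[
\|fg\|_{L^{r,s}(D(x_0,\rho))}\leqslant C\,\|f\|_{L^{p_0,q_0}(D(x_0,\rho))}\|g\|_{L^{p_1,q_1}(D(x_0,\rho))},
\]
which is valid precisely under the index conditions $1/r=1/p_0+1/p_1$ and $1/s\leqslant 1/q_0+1/q_1$. Factoring the Morrey weight as $\rho^{-\beta/r}=\rho^{-\lambda_0/p_0}\cdot\rho^{-\lambda_1/p_1}$ via the second hypothesis, attaching one factor to each Lorentz norm, and taking the supremum over $(x_0,\rho)$ yields \eqref{HolderWM}. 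No interpolation is needed at this stage.

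\textbf{Plan for (ii).} Write $e^{t\Delta}\varphi=G_t\ast\varphi$ with $G_t(x)=(4\pi t)^{-n/2}e^{-|x|^2/(4t)}$. I would first prove the analogous bound on the strong Morrey scale $\mathcal{M}_{p,\lambda}\to\mathcal{M}_{r,\mu}$: fix a ball $D(x_0,\rho)$, decompose $\varphi=\varphi\mathbf{1}_{D(x_0,2\rho)}+\sum_{k\geqslant 1}\varphi\mathbf{1}_{A_k}$ with dyadic annuli $A_k=D(x_0,2^{k+1}\rho)\setminus D(x_0,2^k\rho)$, apply Young's inequality locally on the near piece, and control each far piece by the Gaussian tail $|\nabla_x^m G_t(x-y)|\lesssim t^{-(n+m)/2}e^{-|x-y|^2/(ct)}$. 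Summing the geometric series in $k$ and dividing by $\rho^{\mu/r}$ produces a Morrey-to-Morrey estimate whose $t$-exponent is pinned by the scaling $(u,\varphi)\mapsto(u_\lambda,\varphi_\lambda)$ to equal $-m/2-(\tau_{p,\lambda}-\tau_{r,\mu})/2$. The passage from the strong Morrey scale to the Lorentz-Morrey scale is then a standard real-interpolation argument, using that $\mathcal{M}_{p,q,\lambda}$ arises as a real-interpolation space between strong Morrey spaces with the same critical exponent $\tau_{p,\lambda}$, which respects the ordering $q\leqslant d$.

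\textbf{Expected main obstacle.} The technical heart lies in the regime $r\geqslant p$, where the heat semigroup must genuinely upgrade integrability rather than merely localize. Here the hypothesis ``$\lambda=\mu$ when $p\leqslant r$'' is exactly what is required so that the dyadic tail contributions with $2^k\rho\gg\sqrt{t}$ sum absolutely against the Morrey averaging of $\varphi$ over $D(x_0,2^{k+1}\rho)$. Balancing the Gaussian decay of $\nabla_x^m G_t$ against the Morrey scaling $\rho^{-\lambda/p}$ at each dyadic level, and then reassembling the near-field and far-field estimates into the single factor $t^{-m/2-(\tau_{p,\lambda}-\tau_{r,\mu})/2}\|\varphi\|_{p,q,\lambda}$, is the delicate part; getting the constants $k$-summable is what ultimately forces the scaling inequality $\tau_{r,\mu}\leqslant\tau_{p,\lambda}$.
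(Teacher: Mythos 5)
The paper never proves this lemma: both parts are simply recalled from \cite{Fe2016}, so your proposal can only be measured against that reference and against internal consistency. For part (i) your plan is exactly the standard argument (and the one behind the cited result): apply O'Neil's H\"older inequality for Lorentz norms on each ball $D(x_0,\rho)$, split the weight $\rho^{-\beta/r}=\rho^{-\lambda_0/p_0}\rho^{-\lambda_1/p_1}$ using the hypothesis $\beta/r=\lambda_0/p_0+\lambda_1/p_1$, and take the supremum over balls; this is correct as stated.

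Part (ii), however, has a genuine gap at the final step. The dyadic near/far decomposition with Gaussian tails does give the strong-Morrey bound $\mathcal{M}_{p,\lambda}\to\mathcal{M}_{r,\mu}$, but you cannot then ``pass to the Lorentz--Morrey scale by a standard real-interpolation argument,'' because $\mathcal{M}_{p,q,\lambda}$ does \emph{not} arise as a real interpolation space of strong Morrey spaces. What is true is only the one-sided embedding $\left(\mathcal{M}_{p_0,\lambda_0},\mathcal{M}_{p_1,\lambda_1}\right)_{\theta,q}\hookrightarrow\mathcal{M}_{p,q,\lambda}$ (restrict any decomposition to a ball and interpolate the local Lebesgue couple); the reverse embedding fails in general (non-interpolation of Morrey-type spaces, Blasco--Ruiz--Vega, Lemari\'e-Rieusset). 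Transferring boundedness from strong-Morrey endpoints to a Lorentz--Morrey \emph{source} needs precisely the reverse embedding, and the case this paper actually uses, the weak-Morrey source $q=\infty$, is strictly larger than any such interpolation space; nor can you reduce to a strong-Morrey source by inclusion, since $\mathcal{M}_{p,\infty,\lambda}$ embeds only into weak-Morrey spaces along the same scaling line. Two standard repairs: (a) run your kernel decomposition directly in the localized Lorentz norms — near field by the Young--O'Neil convolution inequality for $L^{p,q}(D)$, far field by Gaussian tails against the $L^{1}$-averages on balls controlled by the Morrey norm — which yields \eqref{disp} without any interpolation of Morrey spaces; or (b) argue by duality with the block spaces $\mathcal{P}\mathcal{D}_{p',q',\chi}$, which, unlike Morrey spaces, do enjoy the interpolation property (Lemma \ref{Interpolation}): prove the semigroup estimate on blocks as in Lemma \ref{disBlock}(i) and dualize via Lemma \ref{DualBlock}; one checks that $\omega_{p',\chi}=n-\tau_{p,\lambda}$, that the block-space condition $\chi_1/(p_1-1)=\chi_2/(p_2-1)$ becomes $\lambda=\mu$ when $p\leqslant r$, and that the monotonicity of the second indices dualizes to $q\leqslant d$, so this route reproduces \eqref{disp} with exactly the stated hypotheses.
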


\bigskip
\subsection{Predual of Morrey-Lorentz spaces}
We recall the predual of Morrey-Lorentz spaces and their properties, we refer readers to ref. \cite{Fe2016, FeXuan2023} for more details. Let $1<p\leqslant\infty$, $1\leqslant q\leqslant\infty$ and
$\chi\geqslant0$ where $q=\infty$ in the case $p=\infty$. A measurable
function $b(x)$ is a $(p,q,\chi)$-block if there  exist $a\in\mathbb{R}^{n}$
and $\rho>0$ such that $\mathrm{supp}(b)\subset D(a,\rho)$ and $\rho
^{\frac{\chi}{p}}\left\Vert b\right\Vert _{L^{p,q}(D(a,\rho))}\leqslant1.$

The block space $\mathcal{P}\mathcal{D}_{p,q,\chi}:=\mathcal{P}\mathcal{D}_{p,q,\chi}(\mathbb{R}^n)$ consists of all
measurable functions $h(x)$ such that
\[
h(x)=\sum_{k=1}^{\infty}\alpha_{k}b_{k}(x),\hbox{  for a.e.,  }x\in
\mathbb{R}^{n},
\]
where $b_{k}(x)$ is a $(p,q,\chi)$-block and $\sum_{k=1}^{\infty}\alpha
_{k}<\infty$. The space $\mathcal{P}\mathcal{D}_{p,q,\chi}$ is a Banach
space endowed with the norm
\[
\left\Vert h\right\Vert _{\mathcal{P}\mathcal{D}_{p,q,\chi}}=\inf\left\{
\sum_{k=1}^{\infty}|\alpha_{k}|<\infty;\,h=\sum_{k=1}^{\infty}\alpha_{k}%
b_{k}\hbox{  where $b_k$'s are $(p,q,\chi)$-blocks}\right\}  .
\]

The relation between the preduals of Lorentz-Morrey spaces and block spaces is
given in the following lemma (see \cite[Lemma 3.1]{Fe2016}).
\begin{lemma}
\label{DualBlock} Let $1<p,q\leqslant\infty$, $0\leqslant\lambda<n$, and
$\chi\geq0$ {be} such that $\dfrac{\lambda}{p-1}=\chi$. Then, we have the duality property $\left(  \mathcal{P}\mathcal{D}_{p^{\prime},q^{\prime}%
,\chi}\right)  ^{\ast}=\mathcal{M}_{p,q,\lambda}$.
\end{lemma}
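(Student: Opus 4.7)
The plan is to establish the duality by proving two separate norm inequalities: first that pairing with any $f\in\mathcal{M}_{p,q,\lambda}$ gives a bounded linear functional on $\mathcal{PD}_{p',q',\chi}$ with operator norm at most $\|f\|_{p,q,\lambda}$, and conversely that every bounded linear functional arises this way from some $f$ with a comparable norm bound.

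For the first direction, I would take a $(p',q',\chi)$-block $b$ supported in $D(a,\rho)$, so that $\rho^{\chi/p'}\|b\|_{L^{p',q'}(D(a,\rho))}\leq 1$. Applying H\"older's inequality for Lorentz spaces on $D(a,\rho)$, together with the Morrey-Lorentz bound $\|f\|_{L^{p,q}(D(a,\rho))}\leq \rho^{\lambda/p}\|f\|_{p,q,\lambda}$, yields
\[
\Bigl|\int fb\,dx\Bigr|\leq \rho^{\lambda/p}\|f\|_{p,q,\lambda}\cdot \rho^{-\chi/p'}.
\]
The exponent balance $\lambda/p=\chi/p'$ follows from the hypothesis $\chi=\lambda/(p-1)$, so the right-hand side collapses to $\|f\|_{p,q,\lambda}$. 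Then, for $h=\sum_k\alpha_k b_k$, linearity gives $|\int fh|\leq \|f\|_{p,q,\lambda}\sum_k|\alpha_k|$; infimizing over admissible block decompositions gives the desired functional-norm bound.

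For the converse, given a bounded linear functional $L$ on $\mathcal{PD}_{p',q',\chi}$, I would exploit the natural embedding $L^{p',q'}(D(a,\rho))\hookrightarrow \mathcal{PD}_{p',q',\chi}$: any $g$ supported in $D(a,\rho)$ is a scalar multiple (by $\rho^{\chi/p'}\|g\|_{L^{p',q'}}$) of a block, so $\|g\|_{\mathcal{PD}_{p',q',\chi}}\leq \rho^{\chi/p'}\|g\|_{L^{p',q'}(D(a,\rho))}$. Consequently, the restriction of $L$ to $L^{p',q'}(D(a,\rho))$ is bounded with norm at most $\|L\|\rho^{\chi/p'}=\|L\|\rho^{\lambda/p}$. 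Invoking the classical Lorentz-space duality $(L^{p',q'}(D(a,\rho)))^{\ast}=L^{p,q}(D(a,\rho))$ (valid in the ranges of the statement; for $q=\infty$ one uses the predual $L^{p',1}$), I obtain a representing function $f_{a,\rho}\in L^{p,q}(D(a,\rho))$ with $\|f_{a,\rho}\|_{L^{p,q}(D(a,\rho))}\leq \|L\|\rho^{\lambda/p}$. Uniqueness of such representatives allows the $f_{a,\rho}$ to be patched into a single $f\in L^{p,q}_{\mathrm{loc}}$ satisfying $\rho^{-\lambda/p}\|f\|_{L^{p,q}(D(a,\rho))}\leq \|L\|$ uniformly in $(a,\rho)$, hence $f\in\mathcal{M}_{p,q,\lambda}$ with $\|f\|_{p,q,\lambda}\leq \|L\|$.

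To conclude, I must verify that $L(h)=\int fh\,dx$ for every $h\in\mathcal{PD}_{p',q',\chi}$, not merely for single blocks. This follows by writing $h=\sum_k\alpha_kb_k$ with $\sum|\alpha_k|<\infty$, noting that $L$ agrees with integration against $f$ on finite partial sums (each compactly supported, so covered by the patching), and passing to the limit: continuity of $L$ gives convergence on one side, while the functional-norm estimate from the first direction gives continuity of $h\mapsto\int fh$ on the other. The main obstacle I anticipate is the patching step in the second direction, where one must verify consistency of the locally defined $f_{a,\rho}$ on overlapping balls and handle the endpoint $q=\infty$ by working with the correct predual pairing $L^{p',1}$--$L^{p,\infty}$; the exponent identity $\chi/p'=\lambda/p$ is the structural mechanism that makes both directions close with the same constant.
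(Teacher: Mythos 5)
Your argument is correct in substance, but note that the paper itself contains no proof of this lemma: it is quoted directly from \cite[Lemma 3.1]{Fe2016}, so there is no internal proof to compare against. What you wrote is the standard block-space/Morrey duality argument and is essentially the route taken in that cited source: the exponent identity $\chi/p'=\chi(p-1)/p=\lambda/p$ makes the Lorentz--H\"older estimate on a single block close with the constant $\left\Vert f\right\Vert _{p,q,\lambda}$, giving $\mathcal{M}_{p,q,\lambda}\subset\left(\mathcal{P}\mathcal{D}_{p',q',\chi}\right)^{\ast}$, while the converse follows from the embedding of $L^{p',q'}(D(a,\rho))$ into the block space with constant $\rho^{\chi/p'}$, local Lorentz duality (with the predual pairing $L^{p',1}$--$L^{p,\infty}$ at the endpoint $q=\infty$, which is in fact the case the paper actually uses), and patching of the local representatives. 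Two small points you should make explicit if you write this up: (a) in the first direction the identity $\int fh=\sum_k\alpha_k\int fb_k$ needs justification; it follows from Tonelli since $\sum_k|\alpha_k|\int|f||b_k|\leqslant C\left\Vert f\right\Vert_{p,q,\lambda}\sum_k|\alpha_k|<\infty$ by the same blockwise H\"older estimate, which also shows $fh\in L^1$ so the pairing is well defined independently of the decomposition; (b) both H\"older in Lorentz spaces and the local Lorentz dualities hold only up to multiplicative constants, so the conclusion is an identification with equivalent norms rather than an isometry, which is all the lemma (and its use in the paper) requires. The degenerate corner $p=\infty$ (forcing $q=\infty$, $\chi=0$) reduces to the classical $L^1$--$L^\infty$ duality and is consistent with your scheme.
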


{The density property of predual Lorentz-Morrey spaces in give in the following lemma.
\begin{lemma}\label{SeparabilityWM}
	Let $1<p\leq\infty$, $1\leq q \leq \infty$ and $\chi\geqslant 0$. Then, the space $L_c^{p^\prime,q^\prime}(\mathbb R^n)$ is dense in $\cal P\cal D_{p^\prime,q^\prime,\chi} $.
As a direct consequence, the space of smooth and compact support functions	
	$C_c^\infty(\mathbb R^n)$ is dense in $\cal P\cal D_{p^\prime,q^\prime,\chi}$ and the space $\cal P\cal D_{p^\prime,q^\prime,\chi}$ is separable.
\end{lemma}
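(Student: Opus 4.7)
The proof breaks into three pieces: density of $L_c^{p',q'}(\mathbb{R}^n)$ in $\mathcal{PD}_{p',q',\chi}$, the subsequent density of $C_c^\infty(\mathbb{R}^n)$, and separability. The pivotal elementary observation is the following: if a nonzero $f\in L^{p',q'}(\mathbb{R}^n)$ is supported in some ball $D(a,R)$, then setting $\alpha = R^{\chi/p'}\|f\|_{L^{p',q'}(D(a,R))}$ makes $f/\alpha$ a $(p',q',\chi)$-block, so the one-term atomic decomposition $f = \alpha\cdot(f/\alpha)$ yields
\[
\|f\|_{\mathcal{PD}_{p',q',\chi}} \;\leq\; R^{\chi/p'}\|f\|_{L^{p',q'}(\mathbb{R}^n)}.
\]
Hence, whenever we fix a supporting ball, we have a continuous embedding $L_c^{p',q'}(\mathbb{R}^n)\hookrightarrow\mathcal{PD}_{p',q',\chi}$, and norm-convergence in $L^{p',q'}$ transfers to convergence in the block norm.

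For the first assertion, given $h\in\mathcal{PD}_{p',q',\chi}$ and $\varepsilon>0$, I would pick by definition an atomic decomposition $h=\sum_{k=1}^{\infty}\alpha_k b_k$ with blocks $b_k$ supported in balls $D(a_k,\rho_k)$ and $\sum_{k=1}^{\infty}|\alpha_k|\leq\|h\|_{\mathcal{PD}_{p',q',\chi}}+\varepsilon$. Each block lies in $L_c^{p',q'}$ (by definition $\|b_k\|_{L^{p',q'}(D(a_k,\rho_k))}\leq \rho_k^{-\chi/p'}$ and $\mathrm{supp}(b_k)$ is compact), and so is the partial sum $h_N=\sum_{k=1}^{N}\alpha_k b_k$, being a finite sum of functions with compact support. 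The triangle inequality in $\mathcal{PD}_{p',q',\chi}$ gives
\[
\|h-h_N\|_{\mathcal{PD}_{p',q',\chi}} \;\leq\; \sum_{k>N}|\alpha_k|\,\|b_k\|_{\mathcal{PD}_{p',q',\chi}}\;\leq\;\sum_{k>N}|\alpha_k|\;\xrightarrow{N\to\infty}\;0,
\]
since each block has $\mathcal{PD}$-norm $\leq 1$. This proves $L_c^{p',q'}(\mathbb{R}^n)$ is dense.

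For the density of $C_c^\infty(\mathbb{R}^n)$ it suffices, by transitivity, to approximate an arbitrary $f\in L_c^{p',q'}$ (say supported in $D(a,R)$) by smooth compactly supported functions in the $\mathcal{PD}$-norm. Mollifying with a standard $\phi_\delta\in C_c^\infty(\mathbb{R}^n)$, the function $f*\phi_\delta$ is smooth, supported in $D(a,R+\delta)$, and satisfies $\|f-f*\phi_\delta\|_{L^{p',q'}(\mathbb{R}^n)}\to 0$ as $\delta\to 0^+$ in the range in which mollification converges in Lorentz norm (in particular for the paper's principal case $q=\infty$, i.e.\ $q'=1$, where $L^{p',1}$ is a standard rearrangement-invariant space with dense smooth subset). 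The embedding estimate applied with radius $R+\delta$ then yields
\[
\|f-f*\phi_\delta\|_{\mathcal{PD}_{p',q',\chi}}\;\leq\;(R+\delta)^{\chi/p'}\|f-f*\phi_\delta\|_{L^{p',q'}}\;\longrightarrow\;0.
\]
Separability is finally deduced by exhibiting a countable dense set: the step functions $\sum_{i=1}^N c_i\chi_{Q_i}$ with $c_i\in\mathbb{Q}$ and $Q_i$ cubes with rational vertices and rational side-length lie in $L_c^{p',q'}$, are dense there in the $L^{p',q'}$-norm for $p',q'<\infty$, and hence—through the embedding—dense in $\mathcal{PD}_{p',q',\chi}$.

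The main subtlety to watch is the mollification step at the endpoint $q'=\infty$ (that is, $q=1$), where smooth functions are generically not dense in $L^{p',\infty}$; here one would need to supplement the argument with an atomic/re-decomposition of $f\in L_c^{p',\infty}$ into blocks of small radius, but this endpoint is not required for the applications in this paper, which use $\mathcal{M}_{p,\infty,\lambda}$ (so $q'=1$). Beyond this, the proof is essentially an atomic-decomposition bookkeeping plus the one inclusion estimate, with no further technical obstacle.
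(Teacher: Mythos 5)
Your proposal is correct and follows essentially the same route as the paper's: truncate the block decomposition $h=\sum_k\alpha_k b_k$ at level $N$ to get density of $L_c^{p',q'}(\mathbb{R}^n)$, then pass to $C_c^\infty(\mathbb{R}^n)$ via the elementary embedding $\left\Vert f\right\Vert_{\mathcal{P}\mathcal{D}_{p',q',\chi}}\leqslant R^{\chi/p'}\left\Vert f\right\Vert_{L^{p',q'}}$ for $f$ supported in $D(a,R)$, which the paper uses only implicitly in its one-line appeal to the density of $C_c^\infty$ in $L_c^{p',q'}$. Your explicit mollification step and your caveat at $q'=\infty$ (i.e. $q=1$, where smooth functions are generically not dense in $L^{p',\infty}$ and the paper's appeal is not justified as stated, though only the case $q'=1$ is used in the paper) make your write-up, if anything, more careful than the original.
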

\begin{proof}
	The proof is extended from the one for the predual Morrey spaces (see for example Theorem 345 in \cite{SaFaHa2020}). Indeed, since $h\in  \cal P\cal D_{p^\prime,q^\prime,\chi}$, there exists a sequence $\{\alpha_k \}_{k=1}^\infty $ satisfying $\sum\limits_{k=1}^{\infty}|\alpha_{k}|<\infty$ and a sequence of $(p^\prime,q^\prime,\chi)$-blocks $\{b_k(x)\}_{k=1}^\infty  $ such that $h(x)=\sum\limits_{k=1}^{+\infty}\alpha_k b_k(x),\hbox{  for a.e.,  }x\in
	\mathbb{R}^{n}$. We define the function $h_N(x):=\sum\limits_{k=1}^{N}\alpha_k b_k(x),\hbox{  for a.e.,  }x\in
	\mathbb{R}^{n}$. We have that $h_N\in L_c^{p^\prime,q^\prime}(\mathbb R^n)$, and
	$\norm{h-h_N}_{\cal P\cal D_{p^\prime,q^\prime,\chi}} \leq  \sum\limits_{k=N+1}^{+\infty}|\alpha_k|\to 0$ as $N\to \infty$. Finally, since $C_c^\infty(\mathbb{R}^n)$ is dense in $L^{p',q'}_c(\mathbb{R}^n)$, it is also dense in $\cal P\cal D_{p^\prime,q^\prime,\chi}$.
\end{proof}}

The next lemma contains some interpolation properties for block spaces (see \cite[Lemma 3.2]{Fe2016}).
\begin{lemma}
\label{Interpolation} Let $1<p,p_{1},p_{0},q,q_{1},q_{0}\leqslant\infty$ and
$\chi,\chi_{1},\chi_{0}\geqslant0$ be such that $\dfrac{1}{p}%
=\dfrac{1-\eta}{p_{0}}+\dfrac{\eta}{p_{1}}$ and $\dfrac{\chi}{p}%
=\dfrac{(1-\eta)\chi_{0}}{p_{0}}+\dfrac{\eta\chi_{1}}{p_{1}}$ with
$\eta\in(0,\,1)$. If $X_{0}$ and $X_{1}$ are Banach spaces and $\mathcal{T}%
:\mathcal{P}\mathcal{D}_{p_{0},q_{0},\chi_{0}}\rightarrow X_{0}$ and
$\mathcal{T}:\mathcal{P}\mathcal{D}_{p_{1},q_{1},\chi_{1}}\rightarrow X_{1}$
are continuous linear   {operators}, then $\mathcal{T}:\mathcal{P}\mathcal{D}%
_{p,q,\chi}\rightarrow(X_{0},X_{1})_{\theta,q}$ is also linear and
continuous. As a consequence, it follows that
\[
\mathcal{P}\mathcal{D}_{p,q,\chi}\hookrightarrow\left(  \mathcal{P}%
\mathcal{D}_{p_{0},q_{0},\chi_{0}},\mathcal{P}\mathcal{D}_{p_{1}%
,q_{1},\chi_{1}}\right)  _{\theta,q}.
\]

\end{lemma}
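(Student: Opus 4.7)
The plan is to first establish the embedding
$$\mathcal{PD}_{p,q,\chi} \hookrightarrow (\mathcal{PD}_{p_0,q_0,\chi_0},\mathcal{PD}_{p_1,q_1,\chi_1})_{\eta,q},$$
after which the operator interpolation statement follows from the standard real interpolation theorem applied to the bounded operators $\mathcal T:\mathcal{PD}_{p_i,q_i,\chi_i}\to X_i$. The proof of the embedding will use the $K$-method combined with the classical real interpolation identity $(L^{p_0,q_0}(E),L^{p_1,q_1}(E))_{\eta,q}=L^{p,q}(E)$ on a fixed measurable set $E$ of finite measure.

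The first step is to reduce to a single block. For $h\in\mathcal{PD}_{p,q,\chi}$ with an atomic expansion $h=\sum_k\alpha_k b_k$ nearly realizing $\|h\|_{\mathcal{PD}_{p,q,\chi}}$, subadditivity of the interpolation quasi-norm reduces the task to the uniform bound $\|b\|_{(\mathcal{PD}_{p_0,q_0,\chi_0},\mathcal{PD}_{p_1,q_1,\chi_1})_{\eta,q}}\leq C$ for every $(p,q,\chi)$-block $b$. Fix such a block $b$ supported in $D(a,\rho)$ with $\rho^{\chi/p}\|b\|_{L^{p,q}(D(a,\rho))}\leq 1$. For any splitting $b=b_0+b_1$ with both pieces supported in $D(a,\rho)$, each $b_i$ is automatically a scalar multiple of a $(p_i,q_i,\chi_i)$-block whose multiplier equals $\rho^{\chi_i/p_i}\|b_i\|_{L^{p_i,q_i}(D(a,\rho))}$. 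Passing to the infimum over such splittings therefore yields
$$K(t,b;\mathcal{PD}_{p_0,q_0,\chi_0},\mathcal{PD}_{p_1,q_1,\chi_1}) \leq \rho^{\chi_0/p_0}\, K\!\left(\rho^{-(\chi_0/p_0-\chi_1/p_1)}t,\ b;\ L^{p_0,q_0}(D(a,\rho)),L^{p_1,q_1}(D(a,\rho))\right).$$

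The second step is to integrate this pointwise bound against $t^{-\eta q}\,dt/t$ and change variables to match the Lorentz $K$-functional. The surviving power of $\rho$ becomes
$$\rho^{\chi_0/p_0-\eta(\chi_0/p_0-\chi_1/p_1)}=\rho^{(1-\eta)\chi_0/p_0+\eta\chi_1/p_1}=\rho^{\chi/p},$$
exactly by the second balance hypothesis on the $\chi_i/p_i$. Combined with the Lorentz interpolation identity, this gives $\|b\|_{(\mathcal{PD}_{p_0,q_0,\chi_0},\mathcal{PD}_{p_1,q_1,\chi_1})_{\eta,q}}\leq C\rho^{\chi/p}\|b\|_{L^{p,q}(D(a,\rho))}\leq C$, which is the required uniform block estimate.

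I expect the main obstacle to be precisely the bookkeeping just described: the two scaling hypotheses are used in a tightly coupled way, one producing the correct Lorentz exponent $1/p$ via the interpolation identity and the other forcing the $\rho$-powers to collapse to exactly $\rho^{\chi/p}$. A secondary technical point concerns the cases $q=\infty$ or $q_i=\infty$, where the $K$-integral must be replaced by a supremum and the Lorentz interpolation identity invoked in its weak form, following the standard adjustments in real interpolation with an extremal second parameter.
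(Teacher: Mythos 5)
Your argument is sound, and in fact the paper does not prove this lemma at all: it is quoted from \cite[Lemma 3.2]{Fe2016}, and the proof there is in the same spirit as yours, namely a reduction to a single block followed by a $K$-functional computation that feeds into the classical real interpolation of Lorentz spaces on the ball $D(a,\rho)$. Your bookkeeping is right: restricting to splittings supported in $D(a,\rho)$ only overestimates the $K$-functional, the change of variables produces exactly $\rho^{(1-\eta)\chi_0/p_0+\eta\chi_1/p_1}=\rho^{\chi/p}$, and the constant in $\bigl(L^{p_0,q_0}(E),L^{p_1,q_1}(E)\bigr)_{\eta,q}=L^{p,q}(E)$ depends only on the exponents (Holmstedt), not on $E$, which is what makes the block bound uniform.

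Three small points you should make explicit. First, the Lorentz interpolation identity you invoke requires $p_{0}\neq p_{1}$; if $p_{0}=p_{1}$ the statement needs the additional relation $\frac{1}{q}=\frac{1-\eta}{q_{0}}+\frac{\eta}{q_{1}}$, an edge case the lemma (as quoted) glosses over and which is irrelevant for its use in the paper. Second, the reduction to one block needs the routine limit identification: the partial sums $h_{N}=\sum_{k\leq N}\alpha_{k}b_{k}$ are Cauchy in the interpolation norm (by your uniform block bound and $\sum|\alpha_{k}|<\infty$) and converge to $h$ in $\mathcal{P}\mathcal{D}_{p,q,\chi}$; since both spaces embed continuously into $\mathcal{P}\mathcal{D}_{p_{0},q_{0},\chi_{0}}+\mathcal{P}\mathcal{D}_{p_{1},q_{1},\chi_{1}}$, the two limits coincide and $h$ inherits the bound. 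Third, deducing the operator half by composing your embedding with the exact real interpolation theorem (the reverse of the order in the statement, where the embedding is the ``consequence'') is fine, but it uses tacitly that the two continuous realizations of $\mathcal{T}$ are compatible, i.e.\ come from a single linear map on $\mathcal{P}\mathcal{D}_{p_{0},q_{0},\chi_{0}}+\mathcal{P}\mathcal{D}_{p_{1},q_{1},\chi_{1}}$; this is implicit in the way the lemma is stated, and worth one sentence.
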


H\"{o}lder-type inequalities work well in the context of block spaces (see the proof in \cite[Lemma 3.5]{Fe2016}).
\begin{lemma}
\label{HolderBlockSpace} Let $1<p,p_{1},p_{0}\leqslant\infty$, $1<q,q_{1}%
,q_{0}\leqslant\infty$ and $\chi,\chi_{1},\chi_{0}\geqslant0$ be such
that $\dfrac{1}{p}=\dfrac{1}{p_{0}}+\dfrac{1}{p_{1}}$ and $\dfrac{1}%
{q}\leqslant\dfrac{1}{q_{0}}+\dfrac{1}{q_{1}}$. Then, we have
\begin{equation}
\left\Vert fg\right\Vert _{\mathcal{P}\mathcal{D}_{p,q,\chi}}\leqslant
C\left\Vert f\right\Vert _{\mathcal{P}\mathcal{D}_{p_{0},q_{0},\chi_{0}}%
}\left\Vert g\right\Vert _{\mathcal{P}\mathcal{D}_{p_{1},q_{1},\chi_{1}}},
\label{HolderBlock}%
\end{equation}
where $C>0$ is a constant. Moreover, if $p=q=1$ and $\dfrac{1}{q_{0}}%
+\dfrac{1}{q_{1}}\geqslant1$, then (\ref{HolderBlock}) is   {still} valid.
\end{lemma}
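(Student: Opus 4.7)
The plan is to reduce the inequality to the pairwise multiplication of blocks and then reassemble a block expansion. First, I would pick near-optimal block expansions $f = \sum_k \alpha_k b_k$ and $g = \sum_j \beta_j c_j$, where $b_k$ is a $(p_0,q_0,\chi_0)$-block and $c_j$ is a $(p_1,q_1,\chi_1)$-block, satisfying $\sum_k |\alpha_k| \le 2\|f\|_{\mathcal{P}\mathcal{D}_{p_0,q_0,\chi_0}}$ and similarly for $g$. Then $fg = \sum_{k,j} \alpha_k \beta_j\, b_k c_j$, and it suffices to show that each product $b_k c_j$ is a bounded universal multiple of a $(p,q,\chi)$-block. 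Summing the resulting block expansion yields
$$\|fg\|_{\mathcal{P}\mathcal{D}_{p,q,\chi}} \le C \sum_{k,j} |\alpha_k \beta_j| \le 4C \|f\|_{\mathcal{P}\mathcal{D}_{p_0,q_0,\chi_0}} \|g\|_{\mathcal{P}\mathcal{D}_{p_1,q_1,\chi_1}},$$
which is the claim.

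For the pairwise step, fix a $(p_0,q_0,\chi_0)$-block $b$ supported in $D(a,\rho)$ with $\rho^{\chi_0/p_0} \|b\|_{L^{p_0,q_0}} \le 1$ and a $(p_1,q_1,\chi_1)$-block $c$ supported in $D(a',\rho')$ with $(\rho')^{\chi_1/p_1}\|c\|_{L^{p_1,q_1}} \le 1$. Assume without loss of generality $\rho \le \rho'$; then $\mathrm{supp}(bc) \subset D(a,\rho) \cap D(a',\rho') \subset D(a,\rho)$, and if this intersection is empty there is nothing to prove. H\"older's inequality for Lorentz spaces, valid under $1/p = 1/p_0 + 1/p_1$ and $1/q \le 1/q_0 + 1/q_1$, applied on the smaller ball gives
$$\|bc\|_{L^{p,q}(D(a,\rho))} \le C \|b\|_{L^{p_0,q_0}} \|c\|_{L^{p_1,q_1}} \le C \rho^{-\chi_0/p_0}(\rho')^{-\chi_1/p_1}.$$
Multiplying both sides by $\rho^{\chi/p}$ and invoking the natural balance relation $\chi/p = \chi_0/p_0 + \chi_1/p_1$ (the predual analogue of the Morrey-Lorentz H\"older condition in Lemma~\ref{heatestimate}), one obtains
$$\rho^{\chi/p} \|bc\|_{L^{p,q}(D(a,\rho))} \le C \rho^{\chi_1/p_1}(\rho')^{-\chi_1/p_1} = C (\rho/\rho')^{\chi_1/p_1} \le C,$$
since $\rho \le \rho'$ and $\chi_1 \ge 0$. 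Thus $bc$ is a bounded multiple of a $(p,q,\chi)$-block supported in $D(a,\rho)$.

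The main obstacle is the Lorentz H\"older step in the endpoint regime $p = q = 1$ required by the last sentence of the lemma: the standard Lorentz H\"older inequality no longer applies directly, but the endpoint version $\|bc\|_{L^1} \le C \|b\|_{L^{p_0,q_0}} \|c\|_{L^{p_1,q_1}}$ remains valid under $1/p_0 + 1/p_1 = 1$ and $1/q_0 + 1/q_1 \ge 1$ via the O'Neil product estimate for rearrangements. Once this replacement is in hand, the remaining argument is unchanged. A secondary subtlety is the bookkeeping of the two possible orderings of $\rho$ and $\rho'$, which is handled symmetrically by always using the ball of smaller radius in the Lorentz H\"older step.
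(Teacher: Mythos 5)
Your proof is correct and is essentially the argument the paper relies on: the text gives no proof of this lemma but defers to \cite[Lemma 3.5]{Fe2016}, whose proof runs exactly along your lines --- expand $f$ and $g$ in near-optimal block decompositions, note that the product of two blocks is supported in the smaller of the two balls, apply O'Neil's H\"older inequality for Lorentz norms there (with the rearrangement version covering the endpoint $p=q=1$, $1/q_0+1/q_1\geq 1$), and check the radius normalization before resumming. One caveat worth recording: the balance relation $\chi/p=\chi_0/p_0+\chi_1/p_1$ that you invoke is not listed among the hypotheses as the lemma is reproduced in this paper, but it is genuinely needed and is part of the hypotheses in the cited source; indeed, since block norms scale with exponent $\omega_{p,\chi}=(n+\chi)/p$ under dilations, the inequality can only hold when $\omega_{p,\chi}=\omega_{p_0,\chi_0}+\omega_{p_1,\chi_1}$, which given $1/p=1/p_0+1/p_1$ is precisely your relation, and without it your bound $\rho^{\chi/p-\chi_0/p_0}(\rho')^{-\chi_1/p_1}$ is not uniform in the radii. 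So state it explicitly as an assumption rather than as an implicit ``natural'' identity; with that, your two remaining technical points (the symmetric treatment of $\rho\leq\rho'$ versus $\rho'\leq\rho$, and the rearrangement of the absolutely summable double series $\sum_{k,j}\alpha_k\beta_j b_kc_j$ into a single block expansion) are handled correctly.
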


Finally, we recall estimates for the heat semigroup $\left\{  e^{t\Delta
}\right\}  _{t\geqslant0}$ as well as a Yamazaki-type estimate \cite{Ya2000}
in $\mathcal{P}\mathcal{D}_{p,q,\chi}$-spaces (see \cite[Lemma 3.6 and Lemma 5.1]{Fe2016}).

\begin{lemma}
\label{disBlock}

(i) Let $1<p_{1},p_{2}\leqslant\infty$, $1\leqslant q_{1}\leqslant
q_{2}\leqslant\infty$, $\chi_{1},\chi_{2}\geqslant0$, $m\in\left\{
0\right\}  \cup\mathbb{N}$ and $\omega_{p_{2},\chi_{2}}\leqslant
\omega_{p_{1},\chi_{1}}$, where $\omega_{p_{1},\chi_{i}}=\dfrac
{n+\chi_{i}}{p_{i}}$. Moreover, suppose that $0\leqslant\dfrac{\chi_{1}%
}{p_{1}-1}=\dfrac{\chi_{2}}{p_{2}-1}<n$ when $p_{1}\leqslant p_{2}$. Assume
also that $q_{i}=\infty$ when $p_{i}=\infty$. Then, there exists a constant
$C>0$ such that 
\[
\left\Vert \nabla_{x}^{m}e^{t\Delta}\varphi\right\Vert _{\mathcal{P}%
\mathcal{D}_{p_{2},q_{2},\chi_{2}}}\leqslant Ct^{-\frac{m}{2}-\frac{1}%
{2}(\omega_{p_{1},\chi_{1}}-\omega_{p_{2},\chi_{2}})}\left\Vert
\varphi\right\Vert _{\mathcal{P}\mathcal{D}_{p_{1},q_{1},\chi_{1}}},
\]
for all $\varphi\in\mathcal{P}\mathcal{D}_{p_{1},q_{1},\chi_{1}}$.

(ii) (Yamazaki-type estimate) Let $1<p<r<\infty$ and $\chi,\alpha   {\geqslant} 0$ be
such that $\frac{\alpha}{r-1}=\frac{\chi}{p-1}<n$. Then, there exists a
constant $C>0$  {satisfying}
\[
\int_{0}^{\infty}s^{\frac{1}{2}(\omega_{p,\chi}-\omega_{r,\alpha})-\frac
{1}{2}}\left\Vert \nabla_{x}e^{s\Delta}\varphi\right\Vert _{\mathcal{P}%
\mathcal{D}_{r,1,\alpha}}ds\leqslant C\left\Vert \varphi\right\Vert
_{\mathcal{P}\mathcal{D}_{p,1,\chi}},
\]
for all $\varphi\in\mathcal{P}\mathcal{D}_{p,1,\chi}$, where $\omega
_{r,\alpha}=\dfrac{n+\alpha}{r}$ and $\omega_{p,\chi}=\dfrac{n+\chi}{p}.$
\end{lemma}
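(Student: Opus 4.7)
My plan is to derive both parts from their Morrey–Lorentz analogues already recorded in Lemma \ref{heatestimate} via a duality argument based on the predual relation of Lemma \ref{DualBlock}. If the block space is $\mathcal{P}\mathcal{D}_{P,Q,X}$, its dual is $\mathcal{M}_{P',Q',\Lambda}$ with $\Lambda = X/(P-1)$, and a short computation using $P'-1 = 1/(P-1)$ gives
\begin{equation*}
\tau_{P',\Lambda} = \frac{n-\Lambda}{P'} = n - \omega_{P,X},
\end{equation*}
so differences of the Morrey–Lorentz scaling exponent translate into the block exponent: $\tau_{p_2',\lambda_2} - \tau_{p_1',\lambda_1} = \omega_{p_1,\chi_1} - \omega_{p_2,\chi_2}$. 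This identity is what will align the two scales of estimates.

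For part (i), given $\varphi \in \mathcal{P}\mathcal{D}_{p_1,q_1,\chi_1}$, I would use duality to write
\begin{equation*}
\|\nabla_x^m e^{t\Delta}\varphi\|_{\mathcal{P}\mathcal{D}_{p_2,q_2,\chi_2}} = \sup_{\|\psi\|_{\mathcal{M}_{p_2',q_2',\lambda_2}}\leq 1}\bigl|\langle \nabla_x^m e^{t\Delta}\varphi,\psi\rangle\bigr|,
\end{equation*}
with $\lambda_i = \chi_i/(p_i-1)$. Since $e^{t\Delta}$ is convolution with the even Gaussian, integration by parts transfers $\nabla_x^m e^{t\Delta}$ to $\psi$ up to a sign, and then Hölder pairing gives
\begin{equation*}
\bigl|\langle \varphi,\nabla_x^m e^{t\Delta}\psi\rangle\bigr| \leq \|\varphi\|_{\mathcal{P}\mathcal{D}_{p_1,q_1,\chi_1}} \|\nabla_x^m e^{t\Delta}\psi\|_{\mathcal{M}_{p_1',q_1',\lambda_1}}.
\end{equation*}
Applying Lemma \ref{heatestimate}(ii) with source $\mathcal{M}_{p_2',q_2',\lambda_2}$ and target $\mathcal{M}_{p_1',q_1',\lambda_1}$ supplies the factor $t^{-m/2 - (\omega_{p_1,\chi_1}-\omega_{p_2,\chi_2})/2}$. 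The three standing assumptions of (i) translate exactly: $\omega_{p_2,\chi_2} \leq \omega_{p_1,\chi_1}$ becomes $\tau_{p_1',\lambda_1} \leq \tau_{p_2',\lambda_2}$; the equality $\chi_1/(p_1-1) = \chi_2/(p_2-1)$ in the case $p_1 \leq p_2$ becomes the required $\lambda_1 = \lambda_2$ in the case $p_2' \leq p_1'$; and $q_1 \leq q_2$ becomes $q_2' \leq q_1'$. Taking the supremum over $\psi$ yields (i); endpoint cases with some $p_i = \infty$, which fall outside the pure duality scheme, would be dispatched by the density statement of Lemma \ref{SeparabilityWM}.

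For part (ii), the same dualization reduces the Yamazaki-type bound to showing the adjoint inequality
\begin{equation*}
\Bigl\|\int_0^\infty s^{\frac12(\omega_{p,\chi}-\omega_{r,\alpha}) - \frac12} \nabla e^{s\Delta} f(s)\,ds\Bigr\|_{\mathcal{M}_{p',\infty,\chi/(p-1)}} \leq C \sup_{s>0} \|f(s)\|_{\mathcal{M}_{r',\infty,\alpha/(r-1)}},
\end{equation*}
which, after the $\tau \leftrightarrow \omega$ translation above, is exactly the time-integrated Yamazaki estimate in the Morrey–Lorentz scale that is classical in this line of work (cf.\ \cite{Ya2000,Fe2016,FeXuan2023}). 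I would first restrict the integration to $s \in [\varepsilon, 1/\varepsilon]$ and work on the dense subset provided by Lemma \ref{SeparabilityWM}; there, part (i) gives strong continuity of $s \mapsto \nabla e^{s\Delta}\varphi$ in the relevant block space and legitimizes exchanging the time integral with the pairing. Removing the truncation by dominated convergence finishes the argument. The main obstacle is precisely this dualization of a time-integrated block-space norm: block spaces are preduals rather than reflexive, so identifying the correct weak-$*$ limit of the pairing and ensuring that the $L^\infty$-in-time selector $f(s)$ can be chosen measurably enough to be paired against $\nabla e^{s\Delta}\varphi$ is the delicate point, and is exactly where the separability and density afforded by Lemma \ref{SeparabilityWM} are essential.
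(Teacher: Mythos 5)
First, note that the paper does not prove Lemma~\ref{disBlock} at all: it is quoted verbatim from \cite[Lemmas 3.6 and 5.1]{Fe2016}, so your attempt has to be measured against the arguments there. For part (i) your duality scheme is essentially sound in the main range of exponents: the bookkeeping $\tau_{P',\Lambda}=n-\omega_{P,X}$ with $\Lambda=X/(P-1)$ is correct, and the three hypotheses of (i) do translate exactly into the hypotheses of Lemma~\ref{heatestimate}(ii) on the dual side, so pairing $\nabla_x^m e^{t\Delta}\varphi$ against $\psi\in\mathcal{M}_{p_2',q_2',\lambda_2}$ and moving the heat kernel onto $\psi$ gives the right time factor. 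Two caveats: the endpoint cases ($p_i=\infty$, and $q_2=\infty$ more generally) are covered neither by Lemma~\ref{DualBlock} nor by Lemma~\ref{heatestimate}(ii) as stated, and density (Lemma~\ref{SeparabilityWM}) does not manufacture the missing endpoint estimates; and a pairing bound $\sup_\psi|\langle h,\psi\rangle|<\infty$ only computes $\|h\|_{\mathcal{PD}_{p_2,q_2,\chi_2}}$ once you know $h$ already lies in the block space (block spaces are preduals, not duals), so membership needs a separate argument, e.g.\ first taking $\varphi$ a finite sum of blocks and passing to the limit.

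The genuine gap is in part (ii), where your reduction is circular. Dualizing, you reduce the Yamazaki-type bound to the time-integrated estimate $\bigl\Vert\int_0^\infty s^{\frac12(\omega_{p,\chi}-\omega_{r,\alpha})-\frac12}\nabla e^{s\Delta}f(s)\,ds\bigr\Vert_{\mathcal{M}_{p',\infty,\lambda}}\lesssim\sup_s\Vert f(s)\Vert_{\mathcal{M}_{r',\infty,\lambda}}$ and declare it ``classical''. But in this line of work that Morrey--Lorentz statement is a \emph{consequence} of Lemma~\ref{disBlock}(ii): the paper's Lemma~\ref{LinearEst} is exactly this estimate (in the case where the weight exponent vanishes) and is obtained in \cite{FeXuan2023} ``by a duality argument and Assertion (ii) in Lemma~\ref{disBlock}''; \cite{Ya2000} proves it only on Lorentz spaces, not on Morrey--Lorentz spaces. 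Moreover the reduced statement carries exactly the same borderline difficulty as the original: applying the dispersive estimate of Lemma~\ref{heatestimate}(ii) pointwise in $s$ produces the integrand $Cs^{-1}\sup_t\Vert f(t)\Vert_{r',\infty,\lambda}$, so $\int_0^\infty s^{-1}ds$ diverges and no progress has been made. The missing idea is the one used in \cite[Lemma 5.1]{Fe2016} (following Yamazaki): prove the inequality directly on the block-space side by taking two exponent pairs $p_0<p<p_1$ on the ray $\chi_i/(p_i-1)=\chi/(p-1)$, observing from part (i) that $\varphi\mapsto\Vert\nabla e^{s\Delta}\varphi\Vert_{\mathcal{PD}_{r,1,\alpha}}$ is bounded from $\mathcal{PD}_{p_i,1,\chi_i}$ into weighted $L^\infty$ (hence weak-type) spaces in the variable $s$, and then using the real interpolation inclusion $\mathcal{PD}_{p,1,\chi}\hookrightarrow(\mathcal{PD}_{p_0,1,\chi_0},\mathcal{PD}_{p_1,1,\chi_1})_{\theta,1}$ of Lemma~\ref{Interpolation}, with second index $1$, to land in the strong $L^1(ds)$ norm at the critical weight. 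This interpolation must be performed on the predual (block) side precisely because real interpolation is not available on the Morrey side, which is why your strategy of pushing the problem to the Morrey--Lorentz scale cannot be completed as written.
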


\label{S3-P}
\subsection{Boussinesq system}\label{MatrixEqs}
In order to be convenient for the reader, henceforth, one denotes $\lambda=n-p$.
We {will consider} the Boussinesq system in the
$\mathcal{M}_{p,\infty,\lambda}$-setting. Using the free-divergence condition for
$u$, system (\ref{BouEq}) can be rewritten as
\begin{equation}
\left\{
\begin{array}
[c]{rll}%
u_{t}-\Delta u+\mathbb{P}\operatorname{div}(u\otimes u)\!\! & =\kappa
\mathbb{P}(\theta g) + \mathbb{P}\dive F\quad & x\in\mathbb{R}^{n},\,t>0,\hfill\\
\operatorname{div}u\!\! & =\;0\quad & x\in\mathbb{R}^{n},\,t\geq0,\\
\theta_{t}-\Delta\theta+\operatorname{div}(\theta u)\!\! & =\; \dive f\quad &
x\in\mathbb{R}^{n},\,t>0,\\
u(x,0)\!\! & =\;u_{0}(x)\quad & x\in\mathbb{R}^{n},\\
\theta(x,0)\!\! & =\;\theta_{0}(x)\quad & x\in\mathbb{R}^{n},
\end{array}
\right.  \label{BouEq1}%
\end{equation}
where the Leray projector $\mathbb{P}$ can be expressed in terms of the Riesz
transforms $\mathcal{R}_{j}=\partial_{j}(-\Delta)^{-\frac{1}{2}},$ namely
$(\mathbb{P})_{k,j}=\delta_{kj}+\mathcal{R}_{k}\mathcal{R}_{j}$ for each
$k,j=1,2...,n$. Riesz transforms $\mathcal{R}_{j}$ are continuous from
$\mathcal{M}_{p,q,\lambda}$ to itself, for each $j=1,2...n$ (see \cite{Fe2016}).  {For the convenience in estimates in the rest of this paper, we consider the forms of the external force and the reference temperature in the system \eqref{BouEq1} by $\dive F$ (for $F$ is a second order tensor) and $\dive f$ (for $f$ is a vector field), respectively.}

We set $L:=%
\begin{bmatrix}
-\Delta & 0\\
0 & -\Delta
\end{bmatrix}
$ acting on the Cartesian product space $ {{\bf X}_{p,\lambda}}=\mathcal{M}_{p,\infty,\lambda}^{\sigma
}\times\mathcal{M}_{p,\infty,\lambda}$. Therefore, using Duhamel's principle in a matrix form, we arrive at the following
integral formulation for (\ref{BouEq1}):

\begin{equation}%
\begin{bmatrix}
u(t)\\
\theta(t)
\end{bmatrix}
=e^{-tL}%
\begin{bmatrix}
u_{0}\\
\theta_{0}%
\end{bmatrix}
+B\left(
\begin{bmatrix}
u\\
\theta
\end{bmatrix}
,%
\begin{bmatrix}
u\\
\theta
\end{bmatrix}
\right)  (t)+T_{g}(\theta)(t) + \mathcal{C}\begin{bmatrix}
F\\
f
\end{bmatrix}(t), \label{mildsol}%
\end{equation}
where the bilinear and linear operators used in the above equation are given respectively by
\begin{equation}
B\left(
\begin{bmatrix}
u\\
\theta
\end{bmatrix}
,%
\begin{bmatrix}
v\\
\xi
\end{bmatrix}
\right)  (t):=-\int_{0}^{t}\nabla_{x} \cdot e^{-(t-s)L}%
\begin{bmatrix}
\mathbb{P}(u\otimes v)\\
u\xi
\end{bmatrix}
(s)ds, \label{Bilinear}%
\end{equation}
\begin{equation}
T_{g}(\theta)(t):=\int_{0}^{t}e^{-(t-s)L}%
\begin{bmatrix}
\kappa\mathbb{P}(\theta g)\\
0
\end{bmatrix}
(s)ds \label{Couple}%
\end{equation}
and
\begin{equation}
\mathcal{C}
\begin{bmatrix}
F\\
f
\end{bmatrix} (t):=\int_{0}^{t}\nabla_{x} \cdot e^{-(t-s)L}%
\begin{bmatrix}
\mathbb{P} (F)\\
 f
\end{bmatrix}
(s)ds.
\end{equation}
\begin{remark}
\label{Rem-form-mild} It is worth noting that the integral formulation
(\ref{mildsol}) should be meant in a dual sense in the $\mathcal{M}%
_{p,\infty,\lambda}$-setting by employing the corresponding predual space
according to Lemma \ref{DualBlock}. { This means that
\begin{eqnarray*}
	\tvh{\begin{bmatrix}
			u(t)\\	\theta(t)
		\end{bmatrix},\begin{bmatrix}
			\varphi \\
			\psi 
	\end{bmatrix}}
	&=&\tvh{e^{-tL}\begin{bmatrix}
			u_0\\
			\theta_0
		\end{bmatrix},\begin{bmatrix}
			\varphi \\
			\psi 
	\end{bmatrix}}+
	\tvh{\int_0^te^{-(t-s)L}	\left( \mathcal{G}\begin{bmatrix}u\\
			\theta
		\end{bmatrix}(s) + \mathcal{F}(s) \right)ds, \begin{bmatrix}
			\varphi \\	\psi 
	\end{bmatrix}},
\end{eqnarray*}
for all $t>0$ and all $\begin{bmatrix}
	\varphi\\ \psi
\end{bmatrix} \in \mathcal{PD} _{\frac{p}{p-1},1,\frac{\lambda}{p-1}}\times \mathcal{PD} _{\frac{p}{p-1},1,\frac{\lambda}{p-1}}$. Here, we denoted that
$$\mathcal{G}\begin{bmatrix} v\\
		\eta
	\end{bmatrix}:= \begin{bmatrix}
		\mathbb{P}[-\dive (v\otimes v)+\kappa \eta g] \\
		-\dive (\eta v)
	\end{bmatrix}.$$}
\end{remark}
 {
\begin{definition}
	Let $(u_0,\theta_0)\in   {\bf X}_{p,\lambda}$. A pair functions $(u(x,t),\theta(x,t))$ satisfying
\begin{equation*}
		\lim\limits_{t\to 0^+}\left\langle \begin{bmatrix}
		u(t)\\ \theta(t)
	\end{bmatrix},\begin{bmatrix}
		\varphi\\ \psi
	\end{bmatrix}\right\rangle  =\left\langle \begin{bmatrix}
		u_0\\ \theta_0
	\end{bmatrix},\begin{bmatrix}
		\varphi\\ \psi
	\end{bmatrix}\right\rangle ,\text{ for all } \begin{bmatrix}
	\varphi\\ \psi
	\end{bmatrix} \in \mathcal{PD} _{\frac{p}{p-1},1,\frac{\lambda}{p-1}}\times \mathcal{PD} _{\frac{p}{p-1},1,\frac{\lambda}{p-1}},
\end{equation*}
is said a global mild solution for the initial value problem \eqref{BouEq1}, if $(u, \theta)$ has the integral formula \eqref{mildsol} in sense of
distribution in Remark \ref{Rem-form-mild}.
\end{definition}}

\section{Linear estimates and periodic solutions for linear  {systems}}\label{S3}
We now study the following linear system
\begin{align}\label{LinearizedSystem}
	\begin{cases}
		\dfrac{\partial }{\partial t} \begin{bmatrix}
			u\\
			\theta
		\end{bmatrix}
		+ L\begin{bmatrix}
			u\\
			\theta
		\end{bmatrix} = \mathcal{G}\begin{bmatrix}0\\
			\eta
		\end{bmatrix}+
		\mathcal{F}(t) \medskip\\
		\begin{bmatrix}
			u(0)\\
			\theta(0)
		\end{bmatrix}  = \begin{bmatrix}
			u_0\cr \theta_0
		\end{bmatrix}
		\in {{\bf X}_{p,\lambda},}
	\end{cases}
\end{align}
 {where $\cal G: BC(\r_+,{\bf X}_{p,\lambda})\to  BC(\r_+,{\bf X}_{p,\lambda})$ is given by
	$$\mathcal{G}\begin{bmatrix} v\\
		\eta
	\end{bmatrix}:= \begin{bmatrix}
		\mathbb{P}[-\dive (v\otimes v)+\kappa \eta g] \\
		-\dive (\eta v)
	\end{bmatrix} $$ with $g\in BC(\mathbb{R}_+,\cal M_{b,\infty,\lambda}) \;(\text{for } b>\frac{p}{2})$
	and
	$\mathcal{F} =\begin{bmatrix}
		\mathbb{P} \text{ div } F \\
		\text{ div } f 
	\end{bmatrix} \in BC(\mathbb{R}_+, {\bf X}_{\frac{p}{2},\lambda}). $ 
	\begin{definition}\label{MildSol1}
		Let $(u_0,\theta_0)\in {\bf X}_{p,\lambda} $. A pair functions $(u(x,t),\theta(x,t))$ is said a global mild solution for the initial value problem \eqref{LinearizedSystem}, if
	\begin{equation}\label{limito0}
		\lim\limits_{t\to 0^+}\left\langle \begin{bmatrix}
			u(t)\\ \theta(t)
		\end{bmatrix},\begin{bmatrix}
			\varphi\\ \psi
		\end{bmatrix}\right\rangle  =\left\langle \begin{bmatrix}
			u_0\\ \theta_0
		\end{bmatrix},\begin{bmatrix}
			\varphi\\ \psi
		\end{bmatrix}\right\rangle ,\text{ for all } \begin{bmatrix}
			\varphi\\ \psi
		\end{bmatrix} \in \mathcal{PD} _{\frac{p}{p-1},1,\frac{\lambda}{p-1}}\times \mathcal{PD} _{\frac{p}{p-1},1,\frac{\lambda}{p-1}},
	\end{equation}
		 and $(u, \theta)$ has the following integral formula
		\begin{equation}\label{sol1}
			\begin{bmatrix}
				u(t)\\
				\theta(t) 
			\end{bmatrix} = e^{-tL}\begin{bmatrix}
				u_0\\
				\theta_0
			\end{bmatrix} + \int_0^t e^{-(t-s)L} \left( \mathcal{G}\begin{bmatrix}0\\
				\eta
			\end{bmatrix}(s) + \mathcal{F}(s) \right) ds
		\end{equation} in sense of
		distribution for all $t > 0$, i.e.,
\begin{eqnarray*}
	\tvh{\begin{bmatrix}
			u(t)\\	\theta(t)
		\end{bmatrix},\begin{bmatrix}
			\varphi \\
			\psi 
	\end{bmatrix}}
	&=&\tvh{e^{-tL}\begin{bmatrix}
			u_0\\
			\theta_0
		\end{bmatrix},\begin{bmatrix}
			\varphi \\
			\psi 
	\end{bmatrix}}+
	\tvh{\int_0^te^{-(t-s)L}	\left( \mathcal{G}\begin{bmatrix}0\\
			\eta
		\end{bmatrix}(s) + \mathcal{F}(s) \right)ds, \begin{bmatrix}
				\varphi \\	\psi 
	\end{bmatrix}},
\end{eqnarray*}
for all $\begin{bmatrix}
	\varphi\\ \psi
\end{bmatrix} \in \mathcal{PD} _{\frac{p}{p-1},1,\frac{\lambda}{p-1}}\times \mathcal{PD} _{\frac{p}{p-1},1,\frac{\lambda}{p-1}}.$
\end{definition}}

We concern the linear operator
\begin{equation}
\mathcal{L}%
\begin{bmatrix}
f_{1}\\
f_{2}%
\end{bmatrix}
(x)=\int_{0}^{\infty}\nabla_{x} \cdot e^{-sL}%
\begin{bmatrix}
f_{1}\\
f_{2}%
\end{bmatrix}
(\cdot,s)ds \label{linearOp}.%
\end{equation}
By a duality argument and  {Assertion (ii) in Lemma \ref{disBlock}}, we are able to
estimate (\ref{linearOp}) in the weak-Morrey setting as follows (see \cite[Lemma 3.2]{FeXuan2023} for the proof):
\begin{lemma}
\label{LinearEst} Let $n\geqslant3$, $1<r<l<\infty$ and $0\leq\chi <n$
satisfy $\tau_{r,\chi}-\tau_{l,\chi}=1$, where $\tau_{l,\chi}%
=\dfrac{n-\chi}{l}$ and $\tau_{r,\chi}=\dfrac{n-\chi}{r}$. Denoting
$\left\Vert \cdot\right\Vert _{l,\infty,\chi}=\left\Vert \cdot\right\Vert
_{ {{\bf X}_{l,\chi}}}$, we have the estimate
\begin{equation}
\left\Vert \mathcal{L}%
\begin{bmatrix}
f_{1}\\
f_{2}%
\end{bmatrix}
\right\Vert _{  {{\bf X}_{l,\chi}}}\leqslant C_1\sup_{t>0}\left\Vert
\begin{bmatrix}
f_{1}\\
f_{2}%
\end{bmatrix}
(\cdot,t)\right\Vert _{ {{\bf X}_{r,\chi}}}, \label{linearEs}%
\end{equation}
for all $%
\begin{bmatrix}
f_{1}\\
f_{2}%
\end{bmatrix}
\in L^{\infty}(\mathbb R_+,  {{\bf X}_{r,\chi}}),$ where $C_1>0$ is a constant and the supremum over $t$ is
taken in the essential sense.
\end{lemma}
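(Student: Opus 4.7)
The strategy is a duality argument combined with the Yamazaki-type estimate in Lemma \ref{disBlock}(ii). By Lemma \ref{DualBlock}, $\mathcal{M}_{l,\infty,\chi}$ is the dual of the block space $\mathcal{P}\mathcal{D}_{l',1,\chi/(l-1)}$, so bounding $\|\mathcal{L}(f_{1},f_{2})^{T}\|_{{\bf X}_{l,\chi}}$ reduces to estimating the dual pairing of $\mathcal{L}(f_{1},f_{2})^{T}$ with test pairs $(\varphi,\psi)$ in the unit ball of the product predual $\mathcal{P}\mathcal{D}_{l',1,\chi/(l-1)}\times\mathcal{P}\mathcal{D}_{l',1,\chi/(l-1)}$. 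Transferring $\nabla_{x}$ and $e^{-sL}=\mathrm{diag}(e^{s\Delta},e^{s\Delta})$ onto the test pair via integration by parts in $x$ and self-adjointness of the heat semigroup, one obtains
\[
\left\langle \mathcal{L}\begin{bmatrix}f_{1}\\ f_{2}\end{bmatrix},\begin{bmatrix}\varphi\\ \psi\end{bmatrix}\right\rangle =-\int_{0}^{\infty}\!\left\langle \begin{bmatrix}f_{1}(\cdot,s)\\ f_{2}(\cdot,s)\end{bmatrix},\nabla_{x}e^{s\Delta}\begin{bmatrix}\varphi\\ \psi\end{bmatrix}\right\rangle ds.
\]
A componentwise H\"older-type bound in the predual pairing, followed by pulling $\sup_{s>0}\|(f_{1},f_{2})^{T}(\cdot,s)\|_{{\bf X}_{r,\chi}}$ outside the integral, then reduces matters to showing
\[
\int_{0}^{\infty}\|\nabla_{x}e^{s\Delta}\varphi\|_{\mathcal{P}\mathcal{D}_{r',1,\chi/(r-1)}}\,ds\leq C\|\varphi\|_{\mathcal{P}\mathcal{D}_{l',1,\chi/(l-1)}}
\]
and the analogous estimate for $\psi$.

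I would obtain this inequality from Lemma \ref{disBlock}(ii) applied with the parameters $(l',\chi/(l-1))$ playing the role of $(p,\chi)$ there, and $(r',\chi/(r-1))$ playing the role of $(r,\alpha)$. The compatibility condition reads $\frac{\chi/(l-1)}{l'-1}=\frac{\chi/(r-1)}{r'-1}=\chi<n$, which is automatic since $p'-1=1/(p-1)$, while $1<l'<r'<\infty$ follows from $1<r<l<\infty$. A short computation yields $\omega_{p',\chi/(p-1)}=n-\tau_{p,\chi}$ for any admissible $p$, so the weight exponent of $s$ in the Yamazaki integrand becomes
\[
\tfrac{1}{2}\bigl(\omega_{l',\chi/(l-1)}-\omega_{r',\chi/(r-1)}\bigr)-\tfrac{1}{2}=\tfrac{1}{2}\bigl(\tau_{r,\chi}-\tau_{l,\chi}\bigr)-\tfrac{1}{2}=0,
\]
using the hypothesis $\tau_{r,\chi}-\tau_{l,\chi}=1$. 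Lemma \ref{disBlock}(ii) then furnishes precisely the required inequality; taking the supremum over unit-ball test pairs and invoking Lemma \ref{DualBlock} produces \eqref{linearEs}.

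The main obstacle will be justifying the formal manipulations rigorously, since $\mathcal{M}_{l,\infty,\chi}$ is non-reflexive and $\mathcal{L}$ is defined by an improper integral in $s$. I would first test against $(\varphi,\psi)\in C_{c}^{\infty}\times C_{c}^{\infty}$, which Lemma \ref{SeparabilityWM} ensures is dense in the block preduals, and truncate the time integral to $[0,T]$; the uniform control provided by the Yamazaki bound then permits interchanging the dual pairing with the $s$-integral and passing to the limit $T\to\infty$. The Leray projector $\mathbb{P}$ implicit in the first component is harmless, being bounded on both $\mathcal{M}_{r,\infty,\chi}$ and $\mathcal{P}\mathcal{D}_{r',1,\chi/(r-1)}$.
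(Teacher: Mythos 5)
Your proposal is correct and follows exactly the route the paper indicates for this lemma: a duality argument via the predual identification $(\mathcal{P}\mathcal{D}_{l',1,\chi/(l-1)})^{*}=\mathcal{M}_{l,\infty,\chi}$ of Lemma \ref{DualBlock}, reduction to the Yamazaki-type estimate of Lemma \ref{disBlock}(ii) with $(l',\chi/(l-1))$ and $(r',\chi/(r-1))$, and the computation $\omega_{p',\chi/(p-1)}=n-\tau_{p,\chi}$ making the time weight vanish thanks to $\tau_{r,\chi}-\tau_{l,\chi}=1$ (this is the proof given in the cited reference \cite[Lemma 3.2]{FeXuan2023}). Your parameter checks and the density/truncation justification are sound, so no changes are needed.
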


In order to establish the boundedness of the mild solution, we define the norm on the Cartesian product space
 { ${\bf X}_{p,\lambda}$:=$\mathcal M^\sigma_{p,\infty,\lambda}(\mathbb R^n)  \times  \mathcal M_{p,\infty,\lambda}(\mathbb R^n)$} by
$$\norm{.}_{ {{\bf X}_{p,\lambda}}} = \norm{.}_{ \mathcal M^\sigma_{p,\infty,\lambda}}+ \norm{.}_{\mathcal M_{p,\infty,\lambda}}.$$
Moreover, we denote the space of continuous and bounded functions from $\r_+$ to the space ${\bf X}_{p,\lambda}$ by
 $$H_{p,\infty}= {BC}(\r_+,  {{\bf X}_{p,\lambda}}) $$
which is a Banach space endowed with the norm  
 $$\norm{\begin{bmatrix}
 		u\\
 		\theta
 \end{bmatrix}}_{H_{p,\infty}}:=\sup\limits_{t>0}\left( \norm{u(\cdot,t)}_{p,\infty,\lambda}+\norm{\theta(\cdot,t)}_{p,\infty,\lambda} \right).$$ 

Applying inequality \eqref{linearEs}, we are able to justify the following bilinear estimate (see \cite[Lemma 3.3]{FeXuan2023}):
\begin{lemma}
\label{Bestimate} Let   $n\geqslant3$, $2<p\leqslant n$,
$\lambda=n-p$, and consider the bilinear form $B(\cdot,\cdot)$ given in
(\ref{Bilinear}). There exists a constant $K>0$ such that
\begin{equation}
\left\Vert B\left(
\begin{bmatrix}
u\\
\theta
\end{bmatrix}
,%
\begin{bmatrix}
v\\
\xi
\end{bmatrix}
\right)  \right\Vert _{H_{p,\infty}}\leqslant K\left\Vert
\begin{bmatrix}
u\\
\theta
\end{bmatrix}
\right\Vert _{H_{p,\infty}}\left\Vert
\begin{bmatrix}
v\\
\xi
\end{bmatrix}
\right\Vert_{H_{p,\infty}}, \label{bestimate}%
\end{equation}
for all $%
\begin{bmatrix}
u\\
\theta
\end{bmatrix}
,%
\begin{bmatrix}
v\\
\xi
\end{bmatrix}
\in H_{p,\infty}$.
\end{lemma}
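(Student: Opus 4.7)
The plan is to factor the argument into two independent pieces: a pointwise-in-time H\"older estimate in a lower-exponent weak-Morrey space for the quadratic integrand, followed by an application of the half-line linear bound Lemma \ref{LinearEst} that recovers the original exponent via the smoothing of $\nabla e^{-sL}$.

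First I would invoke Lemma \ref{heatestimate}(i) with $p_{0}=p_{1}=p$, $q_{0}=q_{1}=s=\infty$, $\lambda_{0}=\lambda_{1}=\lambda$, and output parameters $r=p/2$, $\beta=\lambda$ (the arithmetic constraint $\beta/r=\lambda_{0}/p_{0}+\lambda_{1}/p_{1}$ reads $2\lambda/p=2\lambda/p$ and thus forces $\beta=\lambda$, while the admissibility condition $r>1$ is exactly what the hypothesis $p>2$ buys). Combined with the boundedness of the Riesz transforms, hence of $\mathbb{P}$, on $\mathcal{M}_{p/2,\infty,\lambda}$, this yields
\begin{equation*}
\sup_{s>0}\left\Vert \begin{bmatrix}\mathbb{P}(u\otimes v)(s)\\ u\xi(s)\end{bmatrix}\right\Vert_{{\bf X}_{p/2,\lambda}} \leqslant C\,\left\Vert \begin{bmatrix}u\\ \theta\end{bmatrix}\right\Vert_{H_{p,\infty}}\left\Vert \begin{bmatrix}v\\ \xi\end{bmatrix}\right\Vert_{H_{p,\infty}}.
\end{equation*}

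Next I would reconcile the finite-interval convolution $\int_{0}^{t}$ in \eqref{Bilinear} with the half-line operator $\mathcal{L}$ of \eqref{linearOp}. Fix $t>0$, change variables $\tau=t-s$, and zero-extend the resulting integrand past $\tau=t$: setting
\begin{equation*}
F_{t}(\tau):=\mathbf{1}_{[0,t]}(\tau)\begin{bmatrix}\mathbb{P}(u\otimes v)(t-\tau)\\ u\xi(t-\tau)\end{bmatrix},
\end{equation*}
one has $B\!\left(\begin{bmatrix}u\\ \theta\end{bmatrix}\!,\begin{bmatrix}v\\ \xi\end{bmatrix}\right)\!(t)=-\mathcal{L}(F_{t})$. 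Lemma \ref{LinearEst} applies with $r=p/2$, $l=p$, $\chi=\lambda=n-p$ because $\tau_{r,\chi}-\tau_{l,\chi}=2-1=1$; it produces $\|B(\cdot,\cdot)(t)\|_{{\bf X}_{p,\lambda}}\leqslant C_{1}\sup_{\tau>0}\|F_{t}(\tau)\|_{{\bf X}_{p/2,\lambda}}$. Since $F_{t}$ vanishes outside $[0,t]$, the right-hand side is dominated by the time-uniform H\"older bound above, and taking the supremum over $t>0$ closes the estimate with $K=C\,C_{1}$.

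The main thing to watch is purely bookkeeping of indices: verifying that the two lemmas are compatible at the chosen exponents. Everything hinges on the critical scaling relation $\lambda=n-p$, which simultaneously makes the H\"older arithmetic trivial and forces the condition $\tau_{p/2,\lambda}-\tau_{p,\lambda}=1$ required by Lemma \ref{LinearEst}, together with the mild restriction $p>2$ that ensures $p/2>1$ in both the H\"older step and the boundedness of $\mathbb{P}$ on $\mathcal{M}_{p/2,\infty,\lambda}$.
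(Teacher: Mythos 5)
Your proposal is correct and follows essentially the route the paper intends: the paper does not write out a proof but cites \cite[Lemma 3.3]{FeXuan2023} and explicitly says the bound follows by "applying inequality \eqref{linearEs}", i.e. exactly your combination of the weak-Morrey H\"older inequality (with $p_0=p_1=p$, $r=p/2$, $\beta=\lambda$) plus boundedness of $\mathbb{P}$, and then Lemma \ref{LinearEst} with $r=p/2$, $l=p$, $\chi=n-p$ applied to the zero-extended, time-reflected integrand. The index bookkeeping you check ($p>2$ giving $p/2>1$, and $\tau_{p/2,\lambda}-\tau_{p,\lambda}=2-1=1$) is precisely what makes the cited lemmas applicable, so no gap.
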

  
 Now we state and prove the uniqueness of
mild solutions for (\ref{BouEq})\ in the $\mathcal{M}_{p,\infty,\lambda}$-setting.
\begin{theorem}\label{well-posedness}
\label{Theo-uniq} Let $n\geqslant3$, $1<p\leqslant n$, $\lambda=n-p$,  and  {$\dfrac{p}{2}<b$. Suppose that $g(\cdot,t)\in \mathcal M_{b,\infty,\lambda}$ and $|||g|||_{\beta,b}$
is bounded, where $\beta=1-\dfrac{p}{2b}.$ For $\begin{bmatrix}
	u_0\\
	\theta_0
\end{bmatrix}\in {\bf X}_{p,\lambda}$, $\eta
\in BC(\mathbb R_+; \mathcal{M}_{p,\infty,\lambda})$ and $\begin{bmatrix}
	F\\
	f
\end{bmatrix}
\in H_{\frac{p}{2},\infty}$}, the linear system (\ref{LinearizedSystem})  { have} a unique bounded mild solution   
$\begin{bmatrix}
	u\\
	\theta
\end{bmatrix}
\in H_{p,\infty}$. Furthermore,
 \begin{align}\label{estimate1}
	\left\Vert
	\begin{bmatrix} u\\
		\theta
	\end{bmatrix}
	\right\Vert_{H_{p,\infty}}
	\leqslant C\left\Vert  
	\begin{bmatrix}
		u_0\\
		\theta_0
	\end{bmatrix}\right\Vert _{ { {\bf X}_{p,\lambda}}}+  \kappa MC_2|||g|||_{\beta,b} \sup_{t>0}\norm{   {\eta(t)}}_{p,\infty,\lambda}+   C_1  \left\Vert
	\begin{bmatrix}
		F\\ f
	\end{bmatrix}
	\right\Vert_{H_{\frac{p}{2},\infty}},
\end{align}
where  $|||g|||_{\beta,b}=\sup\limits_{t>0}t^{\beta}\left\Vert g(\cdot,t)\right\Vert _{b,\infty,\lambda}$.
\end{theorem}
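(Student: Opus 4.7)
The strategy is to split the mild-solution formula \eqref{sol1} into three pieces---the linear semigroup term $e^{-tL}(u_0,\theta_0)$, the coupling integral $T_g(\eta)(t)$, and the divergence-forcing integral $\mathcal{C}(F,f)(t)$---estimate each separately in the $H_{p,\infty}$-norm, and sum. Because \eqref{sol1} is explicit in the data, uniqueness is then automatic. The semigroup piece is immediate: Lemma \ref{heatestimate}(ii) applied component-wise with source and target both equal to $(p,\infty,\lambda)$ and $m=0$ gives a vanishing $t$-exponent, hence the uniform bound $\norm{e^{-tL}(u_0,\theta_0)}_{\mathbf{X}_{p,\lambda}}\leq C\norm{(u_0,\theta_0)}_{\mathbf{X}_{p,\lambda}}$.

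The coupling term is the main obstacle. First apply the H\"older inequality of Lemma \ref{heatestimate}(i) with $p_0=p$, $p_1=b$, $\lambda_0=\lambda_1=\lambda$ to obtain
\[
\norm{\eta(s)g(s)}_{r,\infty,\lambda}\leq C\norm{\eta(s)}_{p,\infty,\lambda}\norm{g(s)}_{b,\infty,\lambda},\qquad \tfrac{1}{r}=\tfrac{1}{p}+\tfrac{1}{b},
\]
absorbing the Leray projector $\mathbb{P}$ through its continuity on Morrey-Lorentz spaces. Applying Lemma \ref{heatestimate}(ii) from $(r,\infty,\lambda)$ to $(p,\infty,\lambda)$ produces the singular kernel $(t-s)^{-(1-\beta)}$, since the exponent $\tfrac{1}{2}(\tau_{r,\lambda}-\tau_{p,\lambda})=\tfrac{p}{2b}=1-\beta$ follows from $\lambda=n-p$. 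Using $\norm{g(s)}_{b,\infty,\lambda}\leq s^{-\beta}|||g|||_{\beta,b}$ and factoring out $\sup_s\norm{\eta(s)}_{p,\infty,\lambda}$ reduces the time integral to the Beta-function identity
\[
\int_0^t(t-s)^{-(1-\beta)}s^{-\beta}\,ds = B(\beta,1-\beta) =: M,
\]
independent of $t$. This is the delicate step: the H\"older pair and the heat-kernel gain must align so that the convolution is both finite (forcing $\beta\in(0,1)$, i.e.\ $b>p/2$, which is exactly the hypothesis) and $t$-independent, yielding the middle summand $\kappa MC_2|||g|||_{\beta,b}\sup_s\norm{\eta(s)}_{p,\infty,\lambda}$ of \eqref{estimate1}.

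For the forcing term, Lemma \ref{LinearEst} applies directly with $r=p/2$, $l=p$, $\chi=\lambda$; the scaling condition $\tau_{r,\chi}-\tau_{l,\chi}=1$ reduces to $\tfrac{n-\lambda}{p/2}-\tfrac{n-\lambda}{p}=1$, which follows again from $\lambda=n-p$, producing $\norm{\mathcal{C}(F,f)}_{H_{p,\infty}}\leq C_1\norm{(F,f)}_{H_{p/2,\infty}}$. Summing the three bounds yields \eqref{estimate1}. Norm continuity of $(u(t),\theta(t))$ on $(0,\infty)$ follows from dominated convergence applied to the two integrals through the integrable kernels above and from norm continuity of $e^{-tL}$ for $t>0$; the distributional initial condition \eqref{limito0} is checked termwise by shifting the heat semigroup onto the test function through the duality of Lemma \ref{DualBlock}, then using density of $C_c^\infty$ in the predual block space from Lemma \ref{SeparabilityWM} to reduce to the classical strong continuity of $e^{t\Delta}$ on smooth functions, while the two integral terms vanish as $t\to 0^+$ by the $t$-uniform bounds just obtained. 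Uniqueness is automatic because \eqref{sol1} is an explicit formula in $(u_0,\theta_0,\eta,F,f)$.
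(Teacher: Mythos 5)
Your proposal follows essentially the same route as the paper's own proof: the same three-term splitting of \eqref{sol1}, H\"older's inequality combined with the Morrey--Lorentz heat estimate \eqref{disp} to produce the $t$-independent Beta-function constant $M$ for the coupling term, Lemma \ref{LinearEst} (with $r=p/2$, $l=p$) for the divergence forcing, and the duality/semigroup-shift argument for the initial trace \eqref{limito0}, with uniqueness immediate from the explicitness of the formula. The only soft spot---asserting that the two Duhamel integrals vanish in the weak-$*$ sense as $t\to 0^+$ ``by the $t$-uniform bounds,'' which by themselves give boundedness rather than vanishing---is left at the same level of detail as in the paper, which likewise states this limit without further justification.
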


\begin{proof}

Following Definition \ref{MildSol1} of global mild solution, we have that the vector $\begin{bmatrix}
u\\\theta
\end{bmatrix}$ given by the integral formula \eqref{sol1} is a solution of the linear system \eqref{LinearizedSystem} if it is bounded and verifies the limit condition \eqref{limito0}. 

 {Now, we prove the boundedness for $\begin{bmatrix}
		u\\
		\theta
	\end{bmatrix}$. Indeed, we have}
\begin{align}
\left\Vert
\begin{bmatrix}
u(t)\\
\theta(t)
\end{bmatrix}
\right\Vert _{ { {\bf X}_{p,\lambda}}}  &  =\left\Vert  e^{-tL}
\begin{bmatrix}
	u_0\\
	\theta_0
\end{bmatrix}   +  \int_{0}^{t}e^{-(t-s)L}%
\begin{bmatrix}
\kappa\mathbb{P}(\eta g)\\
0
\end{bmatrix}
(s)ds    +\int_{0}^{t} e^{-(t-s)L}%
\begin{bmatrix}
	\mathbb{P} \text{ div } F\\
	\text{ div }f
\end{bmatrix}
(s)ds\right\Vert _{  { {\bf X}_{p,\lambda}}}\nonumber\\
&\leqslant \left\Vert  e^{-tL}
\begin{bmatrix}
	u_0\\
	\theta_0
\end{bmatrix}\right\Vert _{  { {\bf X}_{p,\lambda}}}+   \left\Vert \int_{0}^{t}e^{-(t-s)L}%
\begin{bmatrix}
\kappa\mathbb{P}(\eta g)\\
0
\end{bmatrix}
(s)ds\right\Vert _{  { {\bf X}_{p,\lambda}}}\cr
&+\left\Vert \int_{0}^{t}\nabla
_{x}\cdot e^{-(t-s)L}%
\begin{bmatrix}
	\mathbb{P} F\\
	f
\end{bmatrix}
(s)ds \right\Vert _{ { {\bf X}_{p,\lambda}}}\nonumber\nonumber\\
&  \leqslant C\left\Vert  
\begin{bmatrix}
	u_0\\
	\theta_0
\end{bmatrix}\right\Vert _{  {{\bf X}_{p,\lambda}}}+I_{1}(t)+I_{2}(t). \label{EST1}%
\end{align}

  { For bound} $I_1(t)$, using \eqref{disp} with $\dfrac{1}{d}=\dfrac{1}{p}%
+\dfrac{1}{b}$ and $\dfrac{p}{2b}<1$, H\"{o}lder's inequality
\eqref{HolderWM}, and the continuity of Leray projector $\mathbb{P}$, we imply that
\begin{align}
I_{1}(t)  &  \leqslant\int_{0}^{t}\left\Vert e^{-(t-s)L}%
\begin{bmatrix}
\kappa\mathbb{P}(\eta g)\\
0
\end{bmatrix}
(s)\right\Vert _{ {{\bf X}_{p,\lambda}}}ds\nonumber\\
&  \leqslant\kappa C_2\int_{0}^{t}(t-s)^{-\frac{1}{2}\left(  \frac{p}%
{d}-1\right)  }\left\Vert
\begin{bmatrix}
\eta g\\
0
\end{bmatrix}
(s)\right\Vert _{ {{\bf X}_{d,\lambda}}}ds\nonumber\\
&  \leqslant\kappa C_2\int_{0}^{t}(t-s)^{-\frac{p}{2b}}\left\Vert
g(s)\right\Vert _{b,\infty,\lambda}\left\Vert \eta(s)\right\Vert
_{p,\infty,\lambda}ds\nonumber\\
&  \leqslant\kappa C_2\sup_{t>0}t^{1-\frac{p}{2b}}\left\Vert g(t)\right\Vert
_{b,\infty,\lambda}\sup_{t>0}\left\Vert    {\eta(t)}\right\Vert _{p,\infty,\lambda
}\int_{0}^{t}(t-s)^{-\frac{p}{2b}}s^{-1+\frac{p}{2b}}ds\nonumber\\
&  \leq \kappa C_2|||g|||_{\beta,b} \sup_{t>0}\norm{ {\eta(t)}}_{p,\infty,\lambda}  \int_{0}^{1}(1-z)^{-\frac{p}{2b}}z^{-1+\frac{p}{2b}}dz\nonumber\\
&  \leq \kappa MC_2|||g|||_{\beta,b}  \sup_{t>0}\norm{   {\eta(t)}}_{p,\infty,\lambda}, \label{EST4}%
\end{align}
where $M:=\int_{0}^{1}(1-s)^{-\frac{p}{2b}}s^{-1+\frac{p}{2b}}ds<\infty$.

   { For bound} $I_2(t)$, using the Lemma \ref{LinearEst} we get that
\begin{align}\label{ESTT}
	I_{2}(t)  & = \left\Vert  \mathcal C
	\begin{bmatrix}
		F\\
		f
	\end{bmatrix}(t)
	\right\Vert _{  {{\bf X}_{p,\lambda}}} 
	  \leqslant 
	  C_1 \sup_{t>0}\left\Vert
	\begin{bmatrix}
		F(\cdot, t)\\
		f(\cdot, t)
	\end{bmatrix}
	\right\Vert _{  {{\bf X}_{\frac{p}{2},\lambda}}}=C_1  \left\Vert
	\begin{bmatrix}
		F\\ f
	\end{bmatrix}
	\right\Vert_{H_{\frac{p}{2},\infty}}.
\end{align}
Combining inequalities \eqref{EST1}, \eqref{EST4} and \eqref{ESTT}, we receive the final estimate as desired
 \begin{align*}
 	\left\Vert
 	\begin{bmatrix} u\\
 		\theta
 	\end{bmatrix}
 	\right\Vert_{H_{p,\infty}}
 	\leqslant C\left\Vert  
 	\begin{bmatrix}
 		u_0\\
 		\theta_0
 	\end{bmatrix}\right\Vert _{ {{\bf X}_{p,\lambda}}}+  \kappa MC_2|||g|||_{\beta,b} \sup_{t>0}\norm{   {\eta(t)}}_{p,\infty,\lambda}+   C_1  \left\Vert
 	\begin{bmatrix}
 		F\\ f
 	\end{bmatrix}
 	\right\Vert_{H_{\frac{p}{2},\infty}}.
 \end{align*}
 
  { It remains to clarify that
 		\begin{equation*}
 			\lim\limits_{t\to 0^+}\left\langle \begin{bmatrix}
 				u(t)\\ \theta(t)
 			\end{bmatrix},\begin{bmatrix}
 				\varphi\\ \psi
 			\end{bmatrix}\right\rangle  =\left\langle \begin{bmatrix}
 				u_0\\ \theta_0
 			\end{bmatrix},\begin{bmatrix}
 				\varphi\\ \psi
 			\end{bmatrix}\right\rangle ,\text{ for all } \begin{bmatrix}
 				\varphi\\ \psi
 			\end{bmatrix} \in \mathcal{PD} _{\frac{p}{p-1},1,\frac{\lambda}{p-1}} \times \mathcal{PD} _{\frac{p}{p-1},1,\frac{\lambda}{p-1}}.
 		\end{equation*}
 Indeed, it is clear that
 		\begin{eqnarray*}
 		\lim\limits_{t\to 0^+}	\tvh{\begin{bmatrix}
 					u(t)\\	\theta(t)
 				\end{bmatrix},\begin{bmatrix}
 					\varphi \\
 					\psi 
 			\end{bmatrix}}
 			&=&\lim\limits_{t\to 0^+} \tvh{e^{-tL}\begin{bmatrix}
 					u_0\\
 					\theta_0
 				\end{bmatrix},\begin{bmatrix}
 					\varphi \\
 					\psi 
 			\end{bmatrix}}+\lim\limits_{t\to 0^+}
 			\tvh{\int_0^te^{-(t-s)L}	\left( \mathcal{G}\begin{bmatrix}0\\
 					\eta
 				\end{bmatrix}(s) + \mathcal{F}(s) \right)ds, \begin{bmatrix}
 					\varphi \\	\psi 
 			\end{bmatrix}}\cr
 		&=&\lim\limits_{t\to 0^+} \tvh{\begin{bmatrix}
 				u_0\\
 				\theta_0
 			\end{bmatrix},e^{-tL}\begin{bmatrix}
 				\varphi \\
 				\psi 
 		\end{bmatrix}} +\begin{bmatrix}
 		0\\ 0
 	\end{bmatrix}\cr
 	&=&\lim\limits_{t\to 0^+} \tvh{\begin{bmatrix}
 	u_0\\
 	\theta_0
 	\end{bmatrix},\begin{bmatrix}
 	e^{t\bigtriangleup}\varphi \\
 	e^{t\bigtriangleup}\psi 
 \end{bmatrix}}  \cr
&=& \tvh{\begin{bmatrix}
u_0\\
\theta_0
\end{bmatrix},\begin{bmatrix}
 \varphi \\ \psi 
\end{bmatrix}}.
 		\end{eqnarray*} 
 	Our proof is complete. }
\end{proof}

 {Before investigating on the periodicity of the solution to linear system, we recall the definition of periodic function with periodicity $T$ on the half line-axis $\mathbb{R}_+$ as follows:
A function  $h \in BC(\r_+, X )$ is called periodic on time half line  if there is a number $T>0$ such that  $h(t+T)=h(t)$ for all $t>0$.}

Now we state and prove the main result of this section.
\begin{theorem}\label{PeriodicLinearCasse}
	Let $n\geqslant3$, $1<p\leqslant n$, $\lambda=n-p$,   $ {\frac{p}{2}<b}$ and $\beta=1-\frac{p}{2b}$. Assume that the external forces $\eta
	\in   {BC}(\mathbb R_+; \mathcal{M}_{p,\infty,\lambda})$,  { $\begin{bmatrix}
		F\\
		f
	\end{bmatrix}
	\in H_{\frac{p}{2},\infty}$} and $g(\cdot,t)\in \mathcal M_{b,\infty,\lambda},\;\forall t>0$ (with $|||g|||_{\beta,b}$ bounded)
  are periodic functions   {(with respect to the time) with the same periodicity $T$}. Then,  {there exists a unique initial data $(\hat{u}_0,\,\hat{\theta}_0)$ which guarantees that} the linear system \eqref{LinearizedSystem}  {with this initial data} has a unique $T$-periodic mild solution $(\hat{u},\hat{\theta}) \in H_{p,\infty}$. Moreover, $(\hat{u},\hat{\theta})$   {satisfies}
	\begin{equation}\label{estimate11}
		\norm{ \begin{bmatrix}
				\hat{u}\\
				\hat{\theta}
		\end{bmatrix}}_{H_{p,\infty}} \leq (C+1)\left( \kappa MC_2|||g|||_{\beta,b} \sup_{t>0}\norm{   {\eta(t)}}_{p,\infty,\lambda}+   C_1  \left\Vert
	\begin{bmatrix}
	F\\ f
\end{bmatrix}
\right\Vert_{H_{\frac{p}{2},\infty}}\right).
	\end{equation}
\end{theorem}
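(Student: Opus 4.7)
\textbf{Proof plan for Theorem~\ref{PeriodicLinearCasse}.} I would argue by a Massera-type principle on the half-line via a time-$T$ Poincar\'e return map. Define $S:{\bf X}_{p,\lambda}\to{\bf X}_{p,\lambda}$ by $S(v_0):=(u(T),\theta(T))$, where $(u,\theta)$ is the unique bounded mild solution of \eqref{LinearizedSystem} with initial datum $v_0$ supplied by Theorem~\ref{well-posedness}. Because $\eta$, $g$, and $\mathcal F$ are $T$-periodic in time, a time-translation argument together with the uniqueness part of Theorem~\ref{well-posedness} shows that $(u,\theta)$ is $T$-periodic on $\mathbb R_+$ if and only if $v_0$ is a fixed point of $S$. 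Formula~\eqref{sol1} makes $S$ explicitly affine: $S(v_0)=e^{-TL}v_0+w$, where
\[
w:=\int_0^T e^{-(T-s)L}\Bigl(\mathcal G\begin{bmatrix}0\\ \eta\end{bmatrix}(s)+\mathcal F(s)\Bigr)\,ds\in{\bf X}_{p,\lambda}.
\]
The theorem thus reduces to producing a unique fixed point of this affine map.

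Starting from the zero initial datum, iterate: $S^k(0)=\sum_{j=0}^{k-1}e^{-jTL}w=(u(kT),\theta(kT))$. Applying Theorem~\ref{well-posedness} with $v_0=0$ gives the uniform bound $K:=\kappa M C_2|||g|||_{\beta,b}\sup_{t>0}\|\eta(t)\|_{p,\infty,\lambda}+C_1\|\mathcal F\|_{H_{p/2,\infty}}$ on $\|S^k(0)\|_{{\bf X}_{p,\lambda}}$. Form the Ces\`aro means $\bar v_n:=\tfrac1n\sum_{k=0}^{n-1}S^k(0)$, still bounded by $K$. By Lemma~\ref{DualBlock}, ${\bf X}_{p,\lambda}$ is the dual of the separable Banach space $\mathcal{PD}_{p/(p-1),1,\lambda/(p-1)}\times\mathcal{PD}_{p/(p-1),1,\lambda/(p-1)}$ (Lemma~\ref{SeparabilityWM}), so Banach--Alaoglu extracts a subsequence $\bar v_{n_j}\rightharpoonup^{\ast}\hat v_0$ in ${\bf X}_{p,\lambda}$. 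Two facts identify $\hat v_0$ as a fixed point of $S$: first, the telescoping identity $S(\bar v_n)-\bar v_n=\tfrac1n(S^n(0)-0)$ shows $S(\bar v_n)-\bar v_n\to 0$ in norm; second, since $e^{-tL}$ on ${\bf X}_{p,\lambda}$ is the adjoint of the heat semigroup on the predual block space (uniformly bounded by Lemma~\ref{disBlock}(i)), the map $S$ is weak-$\ast$ sequentially continuous. Passing to the weak-$\ast$ limit along $n_j$ yields $S(\hat v_0)=\hat v_0$, and Theorem~\ref{well-posedness} with initial datum $\hat v_0$ produces the sought $T$-periodic mild solution $(\hat u,\hat\theta)$.

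Estimate~\eqref{estimate11} then follows by combining weak-$\ast$ lower semicontinuity of $\|\cdot\|_{{\bf X}_{p,\lambda}}$ (giving $\|\hat v_0\|_{{\bf X}_{p,\lambda}}\leq K$) with \eqref{estimate1} applied to $\hat v_0$. For uniqueness of $\hat v_0$, by the linearity of $S$ it suffices to show that any $v\in{\bf X}_{p,\lambda}$ satisfying $e^{-TL}v=v$ vanishes: iteration gives $e^{-kTL}v=v$ for every $k\geq 1$, hence for $\varphi\in C_c^\infty$ one has $\langle v,\varphi\rangle=\langle v,e^{kT\Delta}\varphi\rangle$; choosing source parameters $(p_1,\chi_1)$ with $\chi_1/(p_1-1)=\lambda$ and $p_1<p/(p-1)$, Lemma~\ref{disBlock}(i) gives polynomial decay $\|e^{kT\Delta}\varphi\|_{\mathcal{PD}_{p/(p-1),1,\lambda/(p-1)}}\to 0$, whence $\langle v,\varphi\rangle=0$; the density of $C_c^\infty$ in the predual (Lemma~\ref{SeparabilityWM}) then yields $v=0$. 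I expect the main obstacle to be establishing the weak-$\ast$ continuity of $S$ and the triviality of the homogeneous-periodic subspace in the non-reflexive weak-Morrey setting; both rely on the block-space duality machinery of Section~\ref{S2} rather than on classical mean-ergodic theorems, which would require reflexivity.
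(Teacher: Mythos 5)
Your construction of the periodic solution is essentially the paper's own: the same Poincar\'e map $\mathcal{P}(v_0)=(u(T),\theta(T))$, the same Ces\`aro means of iterates starting from $0$ bounded via \eqref{estimate1}, the same extraction of a weak-$^*$ limit through Banach--Alaoglu using the separable predual $\mathcal{P}\mathcal{D}_{\frac{p}{p-1},1,\frac{\lambda}{p-1}}\times\mathcal{P}\mathcal{D}_{\frac{p}{p-1},1,\frac{\lambda}{p-1}}$, the same telescoping identity and weak-$^*$ continuity of the affine map obtained by moving $e^{-TL}$ onto the predual, and the same derivation of \eqref{estimate11} from weak-$^*$ lower semicontinuity plus \eqref{estimate1}. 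Where you genuinely diverge is the uniqueness step. The paper takes two $T$-periodic solutions, observes their difference equals $e^{-tL}$ applied to the difference of initial data, and kills it by letting $t\to\infty$ using the dispersive estimate \eqref{disp} together with periodicity; note that this particular computation bounds the $\mathcal{M}_{p,\infty,\lambda}$-norm by the $\mathcal{M}_{d,\infty,\lambda}$-norm of the initial difference with $d<p$, a quantity not controlled by the hypotheses, so it implicitly asks more of the data than is stated. You instead exploit the affine structure: uniqueness reduces to showing $e^{-TL}v=v$ forces $v=0$ in ${\bf X}_{p,\lambda}$, which you get by pairing with $C_c^\infty$ test functions, using Lemma \ref{disBlock}(i) to obtain polynomial decay of $\left\Vert e^{kT\Delta}\varphi\right\Vert_{\mathcal{P}\mathcal{D}_{\frac{p}{p-1},1,\frac{\lambda}{p-1}}}$ (your choice $\chi_1/(p_1-1)=\lambda$, $1<p_1<\frac{p}{p-1}$ does satisfy the lemma's compatibility conditions and yields a strictly positive decay rate), and then the density of $C_c^\infty$ in the predual from Lemma \ref{SeparabilityWM}. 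This duality-side argument stays entirely within the assumed function spaces and so is arguably cleaner than the paper's decay-in-weak-Morrey argument; the paper's route, when it applies, has the advantage of directly exhibiting the asymptotic decay of the difference of solutions rather than only the triviality of the homogeneous fixed-point equation. Both yield the claimed uniqueness of the initial datum and of the periodic solution, so your proposal is correct.
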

\begin{proof}
	We improve the method in \cite[Theorem 2.2]{Huy2014} to point out the existence and uniqueness of the periodic mild solution of the system \eqref{LinearizedSystem}.
	
	For each initial data $(x,\, y)\in   {{\bf X}_{p,\lambda}}$, Theorem \ref{well-posedness} shows that  there exists a unique mild solution $(u,\, \theta)\in H_{p,\infty}$ to 
	\eqref{LinearizedSystem} with the initial data $    {\begin{bmatrix}
		u_0\\
		\theta_0
		\end{bmatrix} }=\begin{bmatrix}
	x\\
	y
	\end{bmatrix}$. Therefore, we are able to define the Poincar\'e map $\P :  {{\bf X}_{p,\lambda} \to  {\bf X}_{p,\lambda}}$ as follows: For each $(x,\, y) \in  {{\bf X}_{p,\lambda}}$, we put
	\begin{equation}
		\begin{split}\label{Po}
			\P\begin{bmatrix}
				x\\
				y
			\end{bmatrix}&:=\begin{bmatrix}
				u(T)\\
				\theta(T)
			\end{bmatrix}
		\end{split}
	\end{equation}
	where $\begin{bmatrix}
			u\\
			\theta
		\end{bmatrix}\in H_{p,\infty}$
	is the unique mild solution of \eqref{LinearizedSystem} with $ {\begin{bmatrix}
			u_0\\	\theta_0
	\end{bmatrix} =\begin{bmatrix}
		x\\	y
	\end{bmatrix}.}$

According to the formula \eqref{sol1} of the solutions, we deduce that 
	\begin{eqnarray}\label{Po1}
		\P\begin{bmatrix}
			x\\
			y
		\end{bmatrix} = \begin{bmatrix}
			u(T)\\
			\theta(T)
		\end{bmatrix} =e^{-TL }\begin{bmatrix}
			x\\
			y
		\end{bmatrix}+\int_0^T  e^{-(T-s)L}    {\left( \mathcal{G} \begin{bmatrix}
			0\\	\eta
		\end{bmatrix}(s) + \mathcal{F}(s)\right)}ds
	\end{eqnarray}
	with $\begin{bmatrix}
		u\\
		\theta
	\end{bmatrix}$  as in \eqref{Po}. 
	Thereby, from $T$-periodicity of $F$, $f$, $\eta $ and $ g$, it follows that
	\begin{eqnarray*}
		\begin{bmatrix}
			u((k+1)T)\\
			\theta((k+1)T)
		\end{bmatrix} &=& e^{-(k+1)TL}   {\begin{bmatrix}
			u_0\\
			\theta_0
			\end{bmatrix} }+\int\limits^{(k+1)T}_{0} e^{-((k+1)T-s)L} \left( \mathcal{G} \begin{bmatrix}
			0\\
			\eta 
		\end{bmatrix}(s) + \mathcal{F}(s)\right)ds \cr
		&=& e^{-(k+1)TL}  {\begin{bmatrix}
			u_0\\
			\theta_0
			\end{bmatrix} }+ \int\limits^{kT}_{0}  e^{-((k+1)T-s)L} \left( \mathcal{G} \begin{bmatrix}
			0\\
			\eta
		\end{bmatrix}(s) + \mathcal{F}(s)\right) ds \cr
		&&+ \int\limits^{(k+1)T}_{kT}  e^{-((k+1)T-s)L} \left( \mathcal{G} \begin{bmatrix}
			0\\
			\eta
		\end{bmatrix}(s) + \mathcal{F}(s)\right) ds\cr
		&=& e^{-TL}e^{-kTL}   {\begin{bmatrix}
			u_0\\
			\theta_0
			\end{bmatrix} }+ \int\limits^{kT}_{0}e^{-TL}e^{-(kT-s)L} \left( \mathcal{G} \begin{bmatrix}
			0\\
			\eta
		\end{bmatrix}(s) + \mathcal{F}(s)\right)ds \cr
		&&+\int\limits^{T}_{0}e^{-(T-s)L} \left( \mathcal{G} \begin{bmatrix}
			0\\
			\eta
		\end{bmatrix}(s) + \mathcal{F}(s)\right)ds\cr
		&=& e^{-TL}\begin{bmatrix}
			u(kT)\\
			\theta(kT)
		\end{bmatrix} + \int\limits^{T}_{0}e^{-(T-s)L} \left( \mathcal{G} \begin{bmatrix}
			0\\
			\eta
		\end{bmatrix} (s)+ \mathcal{F}(s)\right)ds\hbox{ for all }k\in \n.
	\end{eqnarray*}
	
	This implies  that  
	$\P^k\begin{bmatrix}
		x\\
		y
	\end{bmatrix}= \begin{bmatrix}
		u(kT)\\
		\theta(kT)
	\end{bmatrix}\hbox{ for all }k\in \n.$
	Hence,  $\left\{\P^k\begin{bmatrix}
		x\\
		y
	\end{bmatrix}\right\}_{k\in \n}$ is a  bounded sequence  in $   {\bf X}_{p,\lambda}$.
	
	For each $n\in \n$ we define the Ces\`aro sume  $\mathbf{P}_n$ as follows
	\begin{equation}\label{cesa}
		\mathbf{P}_n:=\frac{1}{n}\sum_{k=1}^n\P^k :
		    {\bf X}_{p,\lambda}\to 	   {\bf X}_{p,\lambda}.
	\end{equation}
	To prove the existence of initial data $\begin{bmatrix}
		\hat	u_0\\\hat \theta_0
	\end{bmatrix}=\begin{bmatrix}
	\hat	x\\\hat y
	\end{bmatrix}$ satisfying Theorem \ref{PeriodicLinearCasse}, we consider a specific initial data $\begin{bmatrix}
		x\\y
	\end{bmatrix}= \begin{bmatrix}
		0\\0
	\end{bmatrix}$    $\in  {\bf X}_{p,\lambda}$. By using the inequality \eqref{estimate1}, we obtain
	\begin{equation}\label{boundp}
		\sup_{k\in\n}\norm{\P^k\begin{bmatrix}
				0\\0
		\end{bmatrix}}_{  {{\bf X}_{p,\lambda}}}\le  {\kappa MC_2} |||g|||_{\beta,b} \sup_{t>0}\norm{   {\eta(t)}}_{p,\infty,\lambda} + M\norm{\begin{bmatrix}
				F\\f
		\end{bmatrix}}_{H_{\frac{p}{2},\infty}}.
	\end{equation}
	The boundedness of  $\left\{\P^k\begin{bmatrix}
		0\\
		0
	\end{bmatrix}\right\}_{k\in \n}$ in  
	$   {{\bf X}_{p,\lambda}}$ implies  that the sequence
	$$\left\{\mathbf{P}_n\begin{bmatrix}
		0\\
		0
	\end{bmatrix}\right\}_{n\in \n}=\tap{\frac{1}{n}\sum_{k=1}^n\P^k\begin{bmatrix}
			0\\
			0
	\end{bmatrix}}_{n\in \n}$$
	is clearly a bounded sequence in  
	$ {{\bf X}_{p,\lambda}}$. By \eqref{boundp} we obtain  
	\begin{equation}\label{boundP}
		\sup_{n\in\n}\norm{\mathbf{P}_n\begin{bmatrix}
				0\\
				0
		\end{bmatrix}}_{  {{\bf X}_{p,\lambda}}}\le  {\kappa MC_2} |||g|||_{\beta,b} \sup_{t>0}\norm{    {\eta(t)}}_{p,\infty,\lambda} + M\norm{\begin{bmatrix}
				F\\
				f
		\end{bmatrix}}_{H_{\frac{p}{2},\infty}}.
	\end{equation}  
	Since $ {{\bf X}_{p,\lambda}=} \mathcal{M}^\sigma_{p,\infty,\lambda} \times \mathcal{M}_{p,\infty,\lambda}$ has a separable pre-dual $\mathcal{P}\mathcal{D}_{\frac{p}{p-1},1,\frac{\lambda}{p-1}} \times \mathcal{P}\mathcal{D}_{\frac{p}{p-1},1,\frac{\lambda}{p-1}}$, by Banach-Alaoglu's Theorem there exists a subsequence $\left\{\mathbf{P}_{n_k}\begin{bmatrix}
		0\\
		0
	\end{bmatrix}\right\}$ of 
	$\left\{\mathbf{P}_{n}\begin{bmatrix}
		0\\
		0
	\end{bmatrix}\right\}$ such that
	\begin{equation}\label{suse9}
		\left\{\mathbf{P}_{n_k}\begin{bmatrix}
			0\\
			0
		\end{bmatrix}\right\}\ {\xrightarrow{weak\hbox{-}^*}}{}\  \begin{bmatrix}
			\hat{x}\\
			\hat{y}
		\end{bmatrix}\in  {{\bf X}_{p,\lambda}}
	\end{equation}
	with
	\begin{equation}\label{suse}
		\norm{\begin{bmatrix}
				\hat{x}\\
				\hat{y}
		\end{bmatrix}}_{  {{\bf X}_{p,\lambda}}}\le  {\kappa MC_2} |||g|||_{\beta,b} \sup_{t>0}\norm{   {\eta(t)}}_{p,\infty,\lambda} +  M\norm{\begin{bmatrix}
				F\\
				f
		\end{bmatrix}}_{H_{\frac{p}{2},\infty}}.
	\end{equation}
	By simple computations and using formula \eqref{cesa} we receive 
	$$\P\mathbf{P}_n\begin{bmatrix}
		0\\
		0
	\end{bmatrix}-\mathbf{P}_n\begin{bmatrix}
		0\\
		0
	\end{bmatrix}=\frac{1}{n}\left(\P^{n+1}\begin{bmatrix}
		0\\
		0
	\end{bmatrix}-\P\begin{bmatrix}
		0\\
		0
	\end{bmatrix} \right).$$
	Since the sequence  $\left\{\P^{n+1}\begin{bmatrix}
		0\\
		0
	\end{bmatrix}\right\}_{n\in\n}$ is bounded in $   {{\bf X}_{p,\lambda}}$,
	we get
	$$\lim_{n\to\infty}\left(\P\mathbf{P}_n\begin{bmatrix}
		0\\
		0
	\end{bmatrix}-\mathbf{P}_n\begin{bmatrix}
		0\\
		0
	\end{bmatrix} \right)=\lim_{n\to\infty}\frac{1}{n}\left(\P^{n+1}\begin{bmatrix}
		0\\
		0
	\end{bmatrix}-\P\begin{bmatrix}
		0\\
		0
	\end{bmatrix}\right)=0 \hbox{ strongly in }   {{\bf X}_{p,\lambda}}.$$
	As a consequence, for the subsequence  $\left\{\mathbf{P}_{n_k}\begin{bmatrix}
		0\\
		0
	\end{bmatrix}\right\}$, we have 
	$\P\mathbf{P}_{n_k}\begin{bmatrix}
		0\\
		0
	\end{bmatrix}-\mathbf{P}_{n_k}\begin{bmatrix}
		0\\
		0
	\end{bmatrix} \longrightarrow 0$ strongly in $   {{\bf X}_{p,\lambda}}$
	This limit  together with \eqref{suse9}  implies that 
	\begin{equation}\label{suse2}
		\P\mathbf{P}_{n_k}\begin{bmatrix}
			0\\
			0
		\end{bmatrix}\ {\xrightarrow{weak\hbox{-}^*}}{}\ \begin{bmatrix}
			\hat{x}\\
			\hat{y}
		\end{bmatrix}\in  {{\bf X}_{p,\lambda}}.
	\end{equation}
	  {Now, we claim that} $\P\begin{bmatrix}
		\hat{x}\\
		\hat{y}
	\end{bmatrix}=\begin{bmatrix}
		\hat{x}\\
		\hat{y}
	\end{bmatrix}$. To do this,
	using the formula \eqref{Po1} and denoting by $\tvh{\cdot,\cdot}$ the dual pair between $  {{\bf X}_{p,\lambda}}$ and $\mathcal{P}\mathcal{D}_{\frac{p}{p-1},1,\frac{\lambda}{p-1}} \times \mathcal{P}\mathcal{D}_{\frac{p}{p-1},1,\frac{\lambda}{p-1}}$. 
	Then, since $ {e^{-TL}}$ leaves $\mathcal{P}\mathcal{D}_{\frac{p}{p-1},1,\frac{\lambda}{p-1}} \times \mathcal{P}\mathcal{D}_{\frac{p}{p-1},1,\frac{\lambda}{p-1}}$ invariant, for all $\begin{bmatrix}
		\varphi\\
		\psi
	\end{bmatrix}\in \mathcal{P}\mathcal{D}_{\frac{p}{p-1},1,\frac{\lambda}{p-1}} \times \mathcal{P}\mathcal{D}_{\frac{p}{p-1},1,\frac{\lambda}{p-1}}$, we see that
	\begin{eqnarray}
		\tvh{\P\mathbf{P}_{n_k}\begin{bmatrix}
				0\\
				0
			\end{bmatrix},\begin{bmatrix}
				\varphi\\
				\psi
		\end{bmatrix}}
		&=&\tvh{e^{-TL}\mathbf{P}_{n_k}\begin{bmatrix}
				0\\
				0
			\end{bmatrix},\begin{bmatrix}
				\varphi\\
				\psi
		\end{bmatrix}}+
		\tvh{\int_0^Te^{-(T-s)L} { \left( \mathcal{G}\begin{bmatrix}0\\
				\hat\theta
			\end{bmatrix}(s) + \mathcal{F}(s)\right)   ds, \begin{bmatrix}
					\varphi\\	\psi
		\end{bmatrix}}}\cr
		&=&\tvh{\mathbf{P}_{n_k}\begin{bmatrix}
				0\\
				0
			\end{bmatrix},   {e^{-TL}}\begin{bmatrix}
				\varphi\\
				\psi
		\end{bmatrix}}+\tvh{\int_0^Te^{-(T-s)L}{ \left( \mathcal{G}\begin{bmatrix}0\\
			\hat\theta
		\end{bmatrix}(s) + \mathcal{F}(s)\right)   ds, \begin{bmatrix}
			\varphi\\	\psi
	\end{bmatrix}}}\cr
		&\xrightarrow{n_k\to\infty}&\tvh{\begin{bmatrix}
				\hat{x}\\
				\hat{y}
			\end{bmatrix},   {e^{-TL}}\begin{bmatrix}
				\varphi\\
				\psi
		\end{bmatrix}}+\tvh{\int_0^Te^{-(T-s)L}{ \left( \mathcal{G}\begin{bmatrix}0\\
			\hat\theta
		\end{bmatrix}(s) + \mathcal{F}(s)\right)   ds, \begin{bmatrix}
			\varphi\\	\psi
	\end{bmatrix}}}\cr
		&=&\tvh{  {e^{-TL}}\begin{bmatrix}
				\hat{x}\\
				\hat{y}
			\end{bmatrix},\begin{bmatrix}
				\varphi\\
				\psi
		\end{bmatrix}}+\tvh{\int_0^Te^{-(T-s)L}{ \left( \mathcal{G}\begin{bmatrix}0\\
			\hat\theta
		\end{bmatrix}(s) + \mathcal{F}(s)\right)   ds, \begin{bmatrix}
			\varphi\\	\psi
	\end{bmatrix}}}\cr
		&=&\tvh{\P\begin{bmatrix}
				\hat{x}\\
				\hat{y}
			\end{bmatrix},\begin{bmatrix}
				\varphi\\
				\psi
		\end{bmatrix}}.
	\end{eqnarray} 
	This leads to the fact that 
	\begin{equation}\label{suse3}
		\P\mathbf{P}_{n_k}\begin{bmatrix}
			0\\
			0
		\end{bmatrix}\ {\xrightarrow{weak\hbox{-}^*}}{}\  \P\begin{bmatrix}
			\hat{x}\\
			\hat{y}
		\end{bmatrix}\in 
		 { {\bf X}_{p,\lambda}}.
	\end{equation} 
	It now follows from \eqref{suse2} and \eqref{suse3} that 
	\begin{equation}\label{suse4}\P\begin{bmatrix}
			\hat{x}\\
			\hat{y}
		\end{bmatrix}=\begin{bmatrix}
			\hat{x}\\
			\hat{y}
		\end{bmatrix}.
	\end{equation}
	Taking now the element $\begin{bmatrix}
		\hat{x}\\
		\hat{y}
	\end{bmatrix}\in    { {\bf X}_{p,\lambda}}$ as an initial condition,  by Theorem \ref{well-posedness} there exists a unique mild solution $\begin{bmatrix}
		\hat{u}(\cdot)\\
		\hat{\theta}(\cdot)
	\end{bmatrix}\in {H_{p,\infty}}$ satisfying $   {\begin{bmatrix}
		\hat u_0\\
		\hat \theta_0
		\end{bmatrix}}=\begin{bmatrix}
		\hat{x}\\
		\hat{y}
	\end{bmatrix}$.
	From the definition of Poincar\'e map $\P$ we arrive at $   {\begin{bmatrix}
		\hat u_0\\
		\hat \theta_0
		\end{bmatrix}}=\begin{bmatrix}
		\hat{u}(T)\\
		\hat{\theta}(T)
	\end{bmatrix}$. 
Therefore,	the solution $\begin{bmatrix}
		\hat{u}(t)\\
		\hat{\theta}(t)
	\end{bmatrix}$ is periodic function with the  {periodicity} $T$.
	The inequality \eqref{estimate11} now follows from inequalities \eqref{estimate1} and \eqref{suse}.
	
We now proceed with proving the uniqueness of the periodic mild solution. Indeed, let  {$\begin{bmatrix}
		\hat u^1\\
		\hat \theta^1
		\end{bmatrix}$ and
		$ \begin{bmatrix}
		\hat u^2\\
		\hat \theta^2
		\end{bmatrix}$} be two $T$-periodic mild solutions to the system \eqref{LinearizedSystem} which belong to  $ {H_{p,\infty}}$. Then, setting
	$\begin{bmatrix}
		v\\
		\eta
	\end{bmatrix}= { \begin{bmatrix}
		\hat{u}^1\\
		\hat{\theta}^1
	\end{bmatrix}-\begin{bmatrix}
		\hat{u}^2\\
		\hat{\theta}^2
		\end{bmatrix}}$ we get that $\begin{bmatrix}
		v\\
		\eta
	\end{bmatrix}$ is $T$-periodic and, by the formula \eqref{sol1},
	\begin{equation}\label{Stokes2}
		\begin{bmatrix}
			v(t)\\
			\eta(t)
		\end{bmatrix}=e^{-tL }\left(    {\begin{bmatrix}
			\hat u_0^1\\
			\hat \theta_0^1
		\end{bmatrix} - \begin{bmatrix}
			\hat u_0^2\\
			\hat \theta_0^2
			\end{bmatrix}}\right)\hbox{ for }t>0.
	\end{equation}
 {	Then, by utilizing the dispersive estimates in Assertion (ii) of Lemma \ref{disp} we have that for $t>0$, 
	\begin{eqnarray*}
		\norm{\begin{bmatrix}
				v(t)\\
				\eta(t)
		\end{bmatrix}}_{ { {\bf X}_{p,\lambda}}}&=&
		\norm{e^{-tL }\left( \begin{bmatrix}
				\hat u_0^1\\
				\hat \theta_0^1
			\end{bmatrix} - \begin{bmatrix}
				\hat u_0^2\\
				\hat \theta_0^2
			\end{bmatrix} \right)}_{ { {\bf X}_{p,\lambda}}} =	
\norm{\begin{bmatrix}
e^{t\Delta}(\hat{u}_0^1-\hat{u}_0^2)\\e^{t\Delta}(\hat{\theta}_0^1 - \hat{\theta}_0^2)
\end{bmatrix}}_{{\bf X}_{p,\lambda}}
\cr
&=&  \norm{e^{t\Delta}(\hat{u}_0^1-\hat{u}_0^2)}_{p,\infty,\lambda} + \norm{e^{t\Delta}(\hat{\theta}_0^1 - \hat{\theta}_0^2)}_{p,\infty,\lambda}\cr
&\leq& t^{-\frac{1}{2}\left(\frac{p}{d}-1 \right)}\norm{(\hat{u}_0^1-\hat{u}_0^2)}_{d,\infty,\lambda} + t^{-\frac{1}{2}\left(\frac{p}{d}-1 \right)}\norm{(\hat{\theta}_0^1 - \hat{\theta}_0^2)}_{d,\infty,\lambda} \cr
&\leq &t^{-\frac{1}{2}\left(\frac{p}{d}-1 \right)  }	\norm{ \begin{bmatrix}
				\hat u_0^1\\
				\hat \theta_0^1
			\end{bmatrix} - \begin{bmatrix}
				\hat u_0^2\\
				\hat \theta_0^2
		\end{bmatrix}}_{ { {\bf X}_{d,\lambda}}},
	\end{eqnarray*}
where $\dfrac{1}{d}=\dfrac{1}{p}+\dfrac{1}{b}$.
	Combining this inequality with the fact that $t^{-\frac{1}{2}\left(\frac{p}{d}-1 \right)}$ tends to zero as $t$ tends to infinity (due to $\dfrac{p}{d}-1>0$), we deduce that }	
	\begin{equation}\label{Stab}
		\lim_{t\to \infty}\norm{\begin{bmatrix}
				v(t)\\
				\eta(t)
		\end{bmatrix}}_{  { {\bf X}_{p,\lambda}}}=0.
	\end{equation}
	This fact together with the periodicity of $v$ and $\eta$ implies that $\begin{bmatrix}
		v(t)\\
		\eta(t)
	\end{bmatrix}=0$ for all $t\geqslant 0$. This yields    $ { \begin{bmatrix}
		\hat{u}^1\\
		\hat{\theta}^1
	\end{bmatrix}=\begin{bmatrix}
		\hat{u}^2\\
		\hat{\theta}^2
		\end{bmatrix}}$ and the uniqueness {of such solution} holds.
\end{proof}

\section{Periodic solutions for Boussinesq systems and their polynomial stability}\label{S4}
\subsection{Existence of periodic solutions}  {In this section, we investigate the well-posedness of  periodic solution of the system (\ref{BouEq1}) on the space $H_{p,\infty}=  {BC}(\r_+,   { {\bf X}_{p,\lambda}}) $, where $\lambda=n-p$.
For this purpose, we will use the well-posed results for inhomogeneous linear system obtained in Theorem \ref{PeriodicLinearCasse}.}
\begin{theorem}\label{wellposed}(Well-posedness of periodic mild solutions). Let $n\geqslant 3$, $2<p\leq n$ and $  {\dfrac{p}{2}<b}$. Assume that the external forces $g(\cdot,t)\in \mathcal M_{b,\infty,\lambda}$ for all $t>0$ and
  { $\begin{bmatrix}F\\  f
 	\end{bmatrix} \in H_{\frac{p}{2},\infty}$ are periodic functions   {with respect to the time and they have the same periodicity $T$}}. The well-posedness of periodic mild solutions of Boussinesq system is stated as: if the norms $|||g|||_{\beta,b}=\sup\limits_{t>0}t^{1-\frac{p}{2b}}\norm{g(\cdot,t)}_{b,\infty,\lambda}$ and  { $\norm{\begin{bmatrix}F\\ f
 	\end{bmatrix}}_{H_{\frac{p}{2},\infty}}$} are small enough,   { then there exists a unique initial data $\begin{bmatrix}
 	\hat u_0\\
 	\hat\theta_0
 \end{bmatrix}$ which guarantees that the system \eqref{BouEq1} with this initial data has a unique $T$-periodic mild solution $\begin{bmatrix}
		\hat u\\
		\hat\theta
	\end{bmatrix}$ in a small ball of $H_{p,\infty}$.}
\end{theorem}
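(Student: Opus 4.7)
The plan is to construct the periodic solution via a Banach fixed point argument in a small closed ball $B_\rho$ of $T$-periodic functions inside $H_{p,\infty}$, by reducing the nonlinear problem to an iterated application of the linear periodic result of Theorem~\ref{PeriodicLinearCasse}. Given a $T$-periodic input $\begin{bmatrix} v \\ \xi \end{bmatrix} \in B_\rho$, I interpret the bilinear interaction in \eqref{BouEq1} as additional divergence-type forcing by setting $\tilde F := F - v\otimes v$ and $\tilde f := f - v\xi$, both of which are $T$-periodic. Applying Theorem~\ref{PeriodicLinearCasse} to the linear system \eqref{LinearizedSystem} with $\eta = \xi$ and external data $\begin{bmatrix} \tilde F \\ \tilde f \end{bmatrix}$ produces a unique $T$-periodic mild solution $\Phi\begin{bmatrix} v \\ \xi \end{bmatrix} \in H_{p,\infty}$. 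Since $\mathcal{C}\begin{bmatrix} -v\otimes v \\ -v\xi \end{bmatrix} = B\bigl(\begin{bmatrix} v \\ \xi \end{bmatrix}, \begin{bmatrix} v \\ \xi \end{bmatrix}\bigr)$, a fixed point of $\Phi$ is automatically a $T$-periodic mild solution of \eqref{BouEq1}, with initial datum $\begin{bmatrix}\hat u_0 \\ \hat \theta_0\end{bmatrix}$ furnished by Theorem~\ref{PeriodicLinearCasse}.

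To verify self-mapping, I apply the H\"older inequality \eqref{HolderWM} with $(p_0,p_1,\lambda_0,\lambda_1) = (p,p,n-p,n-p)$, which gives output exponent $r = p/2$ and Morrey parameter $n-p = \lambda$, so that
\[
\norm{\begin{bmatrix} v\otimes v \\ v\xi \end{bmatrix}}_{H_{p/2,\infty}} \leq C_0\norm{\begin{bmatrix} v \\ \xi \end{bmatrix}}_{H_{p,\infty}}^{2}.
\]
Combined with \eqref{estimate11}, this yields
\[
\norm{\Phi\begin{bmatrix} v \\ \xi \end{bmatrix}}_{H_{p,\infty}} \leq (C+1)\kappa M C_2\, |||g|||_{\beta,b}\, \rho + (C+1)C_1 \norm{\begin{bmatrix} F \\ f \end{bmatrix}}_{H_{p/2,\infty}} + (C+1)C_1 C_0\, \rho^{2},
\]
so choosing $\rho$ small and then $|||g|||_{\beta,b}$ and $\norm{\begin{bmatrix} F \\ f \end{bmatrix}}_{H_{p/2,\infty}}$ sufficiently small forces the right-hand side to be bounded by $\rho$. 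For the contraction, I expand
\[
v_{1}\otimes v_{1} - v_{2}\otimes v_{2} = (v_{1}-v_{2})\otimes v_{1} + v_{2}\otimes(v_{1}-v_{2})
\]
and similarly for $v_{1}\xi_{1} - v_{2}\xi_{2}$. Since $\Phi\begin{bmatrix} v_1 \\ \xi_1 \end{bmatrix} - \Phi\begin{bmatrix} v_2 \\ \xi_2 \end{bmatrix}$ is itself the unique $T$-periodic solution of the linear system with $\eta = \xi_{1}-\xi_{2}$ and forcing built from these differences, the same H\"older bound combined with \eqref{estimate11} produces a Lipschitz constant of the form $(C+1)\kappa M C_2\, |||g|||_{\beta,b} + 2(C+1)C_1 C_0\,\rho$, which is strictly less than $1$ under the smallness hypothesis.

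Banach's fixed point theorem then supplies the unique fixed point $\begin{bmatrix} \hat u \\ \hat \theta \end{bmatrix} \in B_\rho$, completing the proof. The main delicate point, more conceptual than technical, is that the bilinear map $B(U,U)(t) = -\int_{0}^{t}\nabla\cdot e^{-(t-s)L}(\cdots)\,ds$ is \emph{not} $T$-periodic on its own when the inputs are. Consequently the construction cannot treat $B$ as a stand-alone forcing added after the linear periodic problem is solved; instead it must be embedded inside the linear system as divergence-type forcing $\begin{bmatrix} \mathbb{P}\dive(-v\otimes v) \\ \dive(-v\xi) \end{bmatrix}$, so that the mean-ergodic/Poincar\'e-map mechanism inside Theorem~\ref{PeriodicLinearCasse} selects the correct initial datum making the \emph{entire} Duhamel expression $T$-periodic. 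Once this conceptual setup is in place, the remainder is a routine smallness-parameter verification.
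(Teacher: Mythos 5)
Your construction is essentially the paper's: both define the map $\Phi$ on a small ball of $T$-periodic functions by absorbing the quadratic terms into the divergence-form forcing of the linear system (the paper's $\widetilde{\mathcal{F}}$, your $\tilde F=F-v\otimes v$, $\tilde f=f-v\xi$), invoke Theorem \ref{PeriodicLinearCasse} to produce the periodic output and its initial datum, and close with a Banach fixed point argument under the same smallness conditions on $\rho$, $|||g|||_{\beta,b}$ and the forcing. The only cosmetic difference is that you estimate the modified forcing via the H\"older inequality \eqref{HolderWM} and then apply \eqref{estimate11} wholesale (also to the difference system for the contraction), whereas the paper re-expands the Duhamel formula and uses the bilinear estimate \eqref{bestimate} together with the initial-data bound \eqref{susenonlinear}; the two bookkeepings are equivalent.
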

\begin{proof}
	Firstly, the ball including the set of all $T$-periodic functions $\begin{bmatrix}v\\
	\eta
\end{bmatrix} \in H_{p,\infty}$ with radius $\rho$ and centered at $\begin{bmatrix}0\\
	0
\end{bmatrix}$ is denoted by $B_\rho^T$.
 { For each $\begin{bmatrix}v\\
		\eta
	\end{bmatrix} \in  B_\rho^T$, by using Theorem \ref{PeriodicLinearCasse}, there exists a unique initial data $\begin{bmatrix}
		u_0\\
		\theta_0
	\end{bmatrix} \in {\bf X}_{p,\lambda}$ such that with this initial data the following linear system has a unique $T$-periodic solution $\begin{bmatrix}
	u\\
	\theta
	\end{bmatrix}\in {H_{p,\infty}}$:
\begin{align}\label{Linonho}
	\begin{cases}
		\dfrac{\partial }{\partial t} \begin{bmatrix}
			u\\
			\theta
		\end{bmatrix}
		+ L\begin{bmatrix}
			u\\
			\theta
		\end{bmatrix} = \mathcal{G}\begin{bmatrix}0\\
			\eta
		\end{bmatrix}+
		\widetilde{ \mathcal{F}}(t)  \medskip\\
		\begin{bmatrix}
			u(0)\\
			\theta(0)
		\end{bmatrix}  = \begin{bmatrix}
			u_0\cr \theta_0
		\end{bmatrix}
		\in {\bf X}_{p,\lambda},
	\end{cases}
\end{align}
  here $$\widetilde{ \mathcal{F}}(t)=\begin{bmatrix}
		\mathbb{P} \dive[-  v\otimes v+F] \\
		\dive (-\eta v+ f) 
	\end{bmatrix}(t).$$
Moreover, the $T$-periodic solution $\begin{bmatrix}
	u\\
	\theta
\end{bmatrix}$ satisfies}
\begin{eqnarray}\label{2.19n}
	\begin{bmatrix}
		u(t)\\
		\theta(t) 
	\end{bmatrix} &=& { e^{-tL}\begin{bmatrix}
		u_0\\
		\theta_0
	\end{bmatrix} + \int_0^t e^{-(t-s)L} \left[\mathcal{G}\begin{bmatrix}
		0\\
		\eta
	\end{bmatrix}(s) + \widetilde{ \mathcal{F}}(s)\right] ds.} \cr
	&=& { e^{-tL}\begin{bmatrix}
		u_0\\
		\theta_0
	\end{bmatrix} + \int_0^t e^{-(t-s)L}\left[\begin{bmatrix}
		\mathbb{P}[\kappa \eta g] \\
		0
	\end{bmatrix}(s)  +\begin{bmatrix}
	\mathbb{P} \dive[-  v\otimes v+F] \\
		\dive (-\eta v+ f) 
	\end{bmatrix}(s)\right]ds  }
\cr
&=&  { e^{-tL}\begin{bmatrix}
	u_0\\
	\theta_0
\end{bmatrix} + \int_0^t e^{-(t-s)L}\left[\mathcal{G}\begin{bmatrix}
	v \\
	\eta 
\end{bmatrix}(s) +\mathcal{F}(s)\right]ds, }\cr
&=& {   e^{-tL}\begin{bmatrix}
	u_0\\
	\theta_0
\end{bmatrix} + B\left(\begin{bmatrix}
	v\\
	\eta
\end{bmatrix}, \begin{bmatrix}
	v\\
	\eta
\end{bmatrix} \right)(t) + T_g(\eta)(t)+\cal C\begin{bmatrix}
	F\\
	f
\end{bmatrix} (t),}
\end{eqnarray}	
 { here $B$ and $T_g$ given by \eqref{Bilinear} and \eqref{Couple} respectively,
	and
	\begin{equation}\label{limit}
		\lim\limits_{t\to 0^+}\left\langle \begin{bmatrix}
			u(t)\\ \theta(t)
		\end{bmatrix},\begin{bmatrix}
			\varphi\\ \psi
		\end{bmatrix}\right\rangle  =\left\langle \begin{bmatrix}
			u_0\\ \theta_0
		\end{bmatrix},\begin{bmatrix}
			\varphi\\ \psi
		\end{bmatrix}\right\rangle ,\text{ for all } \begin{bmatrix}
			\varphi\\ \psi
		\end{bmatrix} \in \mathcal{PD} _{\frac{p}{p-1},1,\frac{\lambda}{p-1}}\times \mathcal{PD} _{\frac{p}{p-1},1,\frac{\lambda}{p-1}}.
	\end{equation}
}
 {Therefore, we can define the transformation  $\Phi: B_{\rho}^T \to B_{\rho}^T$ as follows
	\begin{equation}
		\Phi \begin{bmatrix}v\\
			\eta
		\end{bmatrix}(t): = \begin{bmatrix}
		u(t)\\
		\theta(t)
		\end{bmatrix},
	\end{equation} here $\begin{bmatrix}
	u\\
	\theta
\end{bmatrix}$ is given by \eqref{2.19n} (which is $T$-periodic solution of the system \eqref{Linonho}).}

Now, we prove that the map $\Phi$ is a contraction operator. Indeed, utilizing Lemma \ref{Bestimate} and Theorem \ref{well-posedness}, we recognize the fact that for all $\begin{bmatrix}v\\
		\eta
\end{bmatrix}$ belongs to the ball   $B_\rho^T$, then
	\begin{eqnarray}\label{Phibound}
		\norm{\Phi \begin{bmatrix}v\\
				\eta
		\end{bmatrix}(t)}_{  {{\bf X}_{p,\lambda}}}&\leq& \norm{e^{-tL}\begin{bmatrix}
				u_0\\
				\theta_0
		\end{bmatrix}}_{ {{\bf X}_{p,\lambda}}} + \norm{B\left(\begin{bmatrix}
				v\\
				\eta
			\end{bmatrix}, \begin{bmatrix}
				v\\
				\eta
			\end{bmatrix} \right)(t)}_{  {{\bf X}_{p,\lambda}}} + \norm{T_g(\eta)(t)}_{   {{\bf X}_{p,\lambda}}}+ \norm{  {\cal C}\begin{bmatrix}
			F\\
			f
			\end{bmatrix} (t)}_{ {{\bf X}_{p,\lambda}}}\cr
		&\leq& C\norm{\begin{bmatrix}
				u_0\\
				\theta_0
		\end{bmatrix}}_{ {{\bf X}_{p,\lambda}}} + K\norm{\begin{bmatrix}
				v\\
				\eta
		\end{bmatrix}}^2_{H_{p,\infty}} + \kappa MC_2|||g|||_{\beta,b} \sup_{t>0}\norm{   {\eta(t)}}_{p,\infty,\lambda}+C_1\norm{\begin{bmatrix}
				F\\f 
		\end{bmatrix}}_{H_{\frac{p}{2},\infty}}\cr
		&\leq& C\norm{\begin{bmatrix}
				u_0\\
				\theta_0
		\end{bmatrix}}_{ {{\bf X}_{p,\lambda}}} + K\rho^2 + \kappa MC_2\rho  |||g|||_{\beta,b} +C_1\norm{\begin{bmatrix}
		F\\f 
	\end{bmatrix}}_{H_{\frac{p}{2},\infty}}.
	\end{eqnarray}
 {By using inequality \eqref{suse} in the proof of Theorem \ref{PeriodicLinearCasse}, we have
\begin{equation}\label{susenonlinear}
	\norm{\begin{bmatrix}
			u_0\\
			\theta_0
	\end{bmatrix}}_{{\bf X}_{p,\lambda}}\leq 	  \kappa MC_2 |||g|||_{\beta,b} \sup_{t>0}\norm{ \eta(t)}_{p,\infty,\lambda} +  C_1\norm{\begin{bmatrix}
F\\
f
\end{bmatrix}}_{H_{\frac{p}{2},\infty}}.
\end{equation}  
Plugging \eqref{susenonlinear} into \eqref{Phibound}, we obtain that
 \begin{eqnarray}\label{Phibound2}
 	\norm{\Phi \begin{bmatrix}v\\
 			\eta
 	\end{bmatrix}}_{ {{\bf X}_{p,\lambda}}} (t)&\leq&  \kappa C MC_2 |||g|||_{\beta,b} \sup_{t>0}\norm{ \eta(t)}_{p,\infty,\lambda} +  C C_1\norm{\begin{bmatrix}
 	F\\
 	f
 \end{bmatrix}}_{H_{\frac{p}{2},\infty}}\cr
&+&   K\rho^2 + \kappa MC_2\rho  |||g|||_{\beta,b} +C_1\norm{\begin{bmatrix}
 			F\\f 
 	\end{bmatrix}}_{H_{\frac{p}{2},\infty}}\cr
 &\leq&  K\rho^2 + \rho\kappa MC_2(C+1) |||g|||_{\beta,b} +C_1(C+1)\norm{\begin{bmatrix}
 		F\\f 
 \end{bmatrix}}_{H_{\frac{p}{2},\infty}}<\rho
 	\end{eqnarray} 
	if $\rho$,  $\norm{\begin{bmatrix}
	F\\f 
\end{bmatrix}}_{H_{\frac{p}{2},\infty}}$ and $|||g|||_{\beta,b}$ are small enough. Therefore, we obtain that $\Phi(B_\rho^T)\subset B_\rho^T$.}

 {	Moreover, for all $\begin{bmatrix}v\\
		\eta
\end{bmatrix}$ and $\begin{bmatrix}\omega\\
	\xi
\end{bmatrix}  \in B_\rho^T$, by the same way as above we can define
	\begin{eqnarray}
  \Phi \begin{bmatrix}\omega\\
  	\xi
  \end{bmatrix} (t)&=&e^{-tL}\begin{bmatrix}
			\omega_0\\
			\xi_0
			\end{bmatrix} + B\left(\begin{bmatrix}
				\omega\\
				\xi
			\end{bmatrix}, \begin{bmatrix}
				\omega\\
				\xi
			\end{bmatrix} \right)(t) + T_g(\xi)(t)+\cal C\begin{bmatrix}
				F\\
				f
			\end{bmatrix} (t),
	\end{eqnarray}
	where $\begin{bmatrix}
		\omega_0\\
		\xi_0
	\end{bmatrix}$ is the initial data which guarantees that $\Phi \begin{bmatrix}\omega\\
	\xi
	\end{bmatrix}$ is a unique $T$-periodic solution of the system \eqref{Linonho} with the right hand side of the first equation is
$$ \mathcal{G}\begin{bmatrix}0\\
			\xi
		\end{bmatrix}(t)+
		\begin{bmatrix}
	\mathbb{P} \dive[-  \omega\otimes \omega+F] \\
	\dive (-\xi \omega+ f) 
\end{bmatrix}(t).$$
}	
Hence, it implies that 

	\begin{eqnarray}\label{1contraction}
		\norm{\Phi \begin{bmatrix} \omega\\ \xi
			\end{bmatrix} (t)- \Phi \begin{bmatrix}v\\
				\eta
		\end{bmatrix}(t)}_{{\bf X}_{p,\lambda}} &\leq&\norm{e^{-tL}\left[ \begin{bmatrix}
		\omega_0\\ \xi_0
	\end{bmatrix}- \begin{bmatrix}
	v_0\\
	\eta_0
\end{bmatrix}\right]}_{{\bf X}_{p,\lambda}}+ \norm{B\left(\begin{bmatrix}
				\omega\\ \xi
			\end{bmatrix}, \begin{bmatrix}
				\omega\\ \xi
			\end{bmatrix} \right)(t) - B\left(\begin{bmatrix}
				v\\
				\eta
			\end{bmatrix}, \begin{bmatrix}
				v\\
				\eta
			\end{bmatrix} \right)(t)}_{ {\bf X}_{p,\lambda}} \cr
&+& \norm{T_g(\xi)(t) - T_g(\eta)(t)}_{{\bf X}_{p,\lambda}}\cr
		&\leq&C\norm{\begin{bmatrix}
				\omega_0\\ \xi_0
			\end{bmatrix}- \begin{bmatrix}
				v_0\\
				\eta_0
			\end{bmatrix}}_{{\bf X}_{p,\lambda}}+ K\left( \norm{\begin{bmatrix}
				\omega\\ \xi
		\end{bmatrix}}_{H_{p,\infty}} + \norm{\begin{bmatrix}
				v\\
				\eta
		\end{bmatrix}}_{H_{p,\infty}} \right)\norm{\begin{bmatrix}
				\omega-v\\
				\xi-\eta
		\end{bmatrix}}_{H_{p,\infty}} \cr
		&&+ \kappa MC_2 |||g|||_{\beta,b}  \norm{\begin{bmatrix}
				\omega-v\\
				\xi-\eta
		\end{bmatrix}}_{H_{p,\infty}}\cr
		&\leq&C\norm{\begin{bmatrix}
				\omega_0\\ \xi_0
			\end{bmatrix}- \begin{bmatrix}
				v_0\\
				\eta_0
		\end{bmatrix}}_{{\bf X}_{p,\lambda}}+ \left(2\rho K+ \kappa MC_2 |||g|||_{\beta,b}\right) \norm{\begin{bmatrix}
				\omega-v\\
				\xi-\eta
		\end{bmatrix}}_{H_{p,\infty}}.
	\end{eqnarray}
By the same way to prove  \eqref{suse}, we can show that
 \begin{equation*}
	\norm{\begin{bmatrix}
			\omega_0\\ \xi_0
		\end{bmatrix}- \begin{bmatrix}
			v_0\\
			\eta_0
	\end{bmatrix}}_{{\bf X}_{p,\lambda}}\leq \kappa MC_2 |||g|||_{\beta,b} \sup_{t>0}\norm{\xi(t)- \eta(t)}_{p,\infty,\lambda}
\end{equation*}
Plugging this into \eqref{1contraction}, we have
\begin{eqnarray*}
	\norm{\Phi \begin{bmatrix}\omega\\ \xi
		\end{bmatrix} (t)- \Phi \begin{bmatrix}v\\
			\eta
		\end{bmatrix}(t)}_{{\bf X}_{p,\lambda}} &\leq&
	\kappa C MC_2 |||g|||_{\beta,b} \sup_{t>0}\norm{\xi(t)- \eta(t)}_{p,\infty,\lambda}\cr
&&+ \left(2\rho K+ \kappa MC_2 |||g|||_{\beta,b}\right) \norm{\begin{bmatrix}
			\omega-v\\
			\xi-\eta
	\end{bmatrix}}_{H_{p,\infty}}\cr
 &\leq&
   \Big(2\rho K+ \kappa MC_2(C +1) |||g|||_{\beta,b}\Big) \norm{\begin{bmatrix}
		\omega-v\\
		\xi-\eta
\end{bmatrix}}_{H_{p,\infty}}
\end{eqnarray*}
 {due to $\norm{\begin{bmatrix}
			\omega\\ \xi
	\end{bmatrix}}_{H_{p,\infty}}<\rho $ and $ \norm{\begin{bmatrix}
			v\\	\eta
	\end{bmatrix}}_{H_{p,\infty}}<\rho$.}
This shows that the transformation $\Phi$ is a contraction operator if $2\rho K+ \kappa MC_2(C +1) |||g|||_{\beta,b}<1$ provided by $\rho$ and $|||g|||_{\beta,b}$ small enough.  Based on the fixed point argument there exists a unique fixed point $\begin{bmatrix}
		\hat u\\
		\hat\theta
	\end{bmatrix}$ of $\Phi$ in $B_\rho^T$.  {Combining this with the limit \eqref{limit},  we obtain that $\begin{bmatrix}
		\hat u\\
		\hat\theta
	\end{bmatrix}$ is the unique solution of the system \eqref{BouEq1} (with the initial data $\begin{bmatrix}
	\hat u_0\\
	\hat\theta_0
	\end{bmatrix}$) in $B_\rho^T$ with $\rho$ small enough.}
\end{proof}

\subsection{Asymptotic stability}
In order to establish the asymptotic stability of the mild solution obtained in Theorem \ref{wellposed}, 
 { we denote the Cartesian product space $\mathcal{M}^\sigma_{q,\infty,\mu}\times \mathcal{M}_{r,\infty,\nu}$ by ${\bf X}_{q,\mu;r,\nu}$ and we define the norm on ${\bf X}_{q,\mu;r,\nu}$ by
	$$\norm{\begin{bmatrix} u\\ \theta
	\end{bmatrix}}_{{\bf X}_{q,\mu;r,\nu}}: = \norm{u}_{q,\infty,\mu} + \norm{\theta}_{r,\infty,\nu}.$$
We will establish the asymptotic stability of the mild solution obtained in Theorem \ref{wellposed} in the space ${\bf X}_{q,\mu;r,\nu}$. For this purpose, we define the following time-dependent functional space
$$H_{q,r,\infty}: = \left\{ \begin{bmatrix}
	u\cr \theta
\end{bmatrix} \in H_{p,\infty} : \sup_{t>0}\norm{\begin{bmatrix}
		t^{\alpha/2}u\cr t^{\gamma/2}\theta
\end{bmatrix}}_{{\bf X}_{q,\lambda;r,\lambda}} <\infty  \right\}$$
endowed with the norm
$$\norm{ \begin{bmatrix} u\\ 
		\theta
\end{bmatrix} }_{H_{q,r,\infty}}: = \norm{\begin{bmatrix}
		u\cr \theta
\end{bmatrix}}_{H_{p,\infty}} + \sup_{t>0}\norm{\begin{bmatrix}
		t^{\alpha/2}u\cr t^{\gamma/2}\theta
\end{bmatrix}}_{ {\bf X}_{q,\lambda;r,\lambda}},$$
where $\alpha = 1-\dfrac{p}{q}$ and $\gamma = 1-\dfrac{p}{r}$ with $1<p<q \leq r<\infty$.}

Now, we extend the bilinear estimate obtained in Theorem \ref{Bestimate} to $H_{q,r,\infty}$ as follows
\begin{theorem}(Bilinear estimate in $H_{q,r,\infty}$)
	Let $n\geqslant 3$ and $1<p<q \leq r<\infty$. Let $B(\cdot,\cdot)$ be the bilinear form \eqref{Bilinear}. There exists a constant $K > 0$ such that
	\begin{equation}\label{BBestimate}
		\norm{B\left( \begin{bmatrix}
				u\\
				\theta
			\end{bmatrix}, \begin{bmatrix}
				v\\
				\xi
			\end{bmatrix} \right)}_{H_{q,r,\infty}} \leq K \norm{\begin{bmatrix}
				u\\
				\theta
		\end{bmatrix}}_{H_{q,r,\infty}}\norm{\begin{bmatrix}
				v\\
				\xi
		\end{bmatrix}}_{H_{q,r,\infty}}
	\end{equation} 
	for all $\begin{bmatrix}
		u\\
		\theta
	\end{bmatrix}, \begin{bmatrix}
		v\\
		\xi
	\end{bmatrix} \in H_{q,r,\infty}$.
\end{theorem}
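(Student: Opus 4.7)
The plan is to split the $H_{q,r,\infty}$-norm of $B$ into two pieces. The underlying $H_{p,\infty}$-norm is controlled directly by Lemma \ref{Bestimate} together with the elementary inequality $\norm{\cdot}_{H_{p,\infty}}\le\norm{\cdot}_{H_{q,r,\infty}}$ built into the definition of the target norm. The remaining task is to bound the weighted supremum $\sup_{t>0}\norm{(t^{\alpha/2}B_1(t),\,t^{\gamma/2}B_2(t))}_{{\bf X}_{q,\lambda;r,\lambda}}$, where $B_1(t)=-\int_0^t\nabla_x e^{(t-s)\Delta}\mathbb{P}(u\otimes v)(s)\,ds$ and $B_2(t)=-\int_0^t\nabla_x e^{(t-s)\Delta}(u\xi)(s)\,ds$ are the two components of $B$, by a constant multiple of $\norm{(u,\theta)}_{H_{q,r,\infty}}\norm{(v,\xi)}_{H_{q,r,\infty}}$.

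For $B_1$, I would first apply H\"older's inequality \eqref{HolderWM} with $p_0=p_1=q$, $q_0=q_1=\infty$ and $\lambda_0=\lambda_1=\lambda$ to obtain $\norm{u(s)\otimes v(s)}_{q/2,\infty,\lambda}\le C\norm{u(s)}_{q,\infty,\lambda}\norm{v(s)}_{q,\infty,\lambda}$, and then invoke the dispersive estimate \eqref{disp} with $m=1$ from $\mathcal{M}_{q/2,\infty,\lambda}$ to $\mathcal{M}_{q,\infty,\lambda}$ (using the continuity of the Leray projector $\mathbb{P}$ on weak-Morrey spaces), which yields the kernel $(t-s)^{-\frac{1}{2}-\frac{p}{2q}}$ since $\tau_{q/2,\lambda}-\tau_{q,\lambda}=\frac{p}{q}$. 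Inserting the weighted bound $\norm{u(s)}_{q,\infty,\lambda}\le s^{-\alpha/2}\norm{(u,\theta)}_{H_{q,r,\infty}}$ and the analogous estimate for $v$, then substituting $s=tz$, reduces the time integral to a beta function whose convergence follows from $p<q$ (so that $\frac{1}{2}+\frac{p}{2q}<1$ and $\alpha<1$); the homogeneity of the substitution produces exactly the prefactor $t^{-\alpha/2}$.

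The temperature component $B_2$ is treated in the same spirit, but H\"older \eqref{HolderWM} is now applied with $p_0=q$, $p_1=r$, $q_0=q_1=\infty$ and $\lambda_0=\lambda_1=\lambda$, producing the product index $s'=qr/(q+r)$ and Morrey parameter $\lambda$, so that $\norm{u(s)\xi(s)}_{s',\infty,\lambda}\le C\norm{u(s)}_{q,\infty,\lambda}\norm{\xi(s)}_{r,\infty,\lambda}$. The dispersive estimate \eqref{disp} from $\mathcal{M}_{s',\infty,\lambda}$ to $\mathcal{M}_{r,\infty,\lambda}$ again gives the kernel $(t-s)^{-\frac{1}{2}-\frac{p}{2q}}$, because $\tau_{s',\lambda}-\tau_{r,\lambda}=\frac{p}{q}$; combining with the weighted bounds $s^{-\alpha/2}$ for $u$ and $s^{-\gamma/2}$ for $\xi$ and rescaling by $s=tz$ yields a beta integral which converges provided $\frac{\alpha+\gamma}{2}<1$, with prefactor $t^{-\gamma/2}$. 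Summing the two weighted bounds with the $H_{p,\infty}$-estimate from Lemma \ref{Bestimate} yields \eqref{BBestimate}. The main obstacle I expect is only the careful index bookkeeping: verifying that the various $\tau$-exponents really combine so as to produce precisely $t^{-\alpha/2}$ and $t^{-\gamma/2}$, and that the hypothesis $1<p<q\le r<\infty$ is exactly what guarantees both convergence of the beta integrals and the monotonicity of $\tau$ required for \eqref{disp} (the coincidence of the Morrey parameter $\lambda$ on source and target being automatic here).
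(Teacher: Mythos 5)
Your proposal is correct and follows essentially the same route as the paper's own proof: the same splitting of $B$ into the components $B_1$, $B_2$, the same H\"older estimates into $\mathcal{M}_{q/2,\infty,\lambda}$ and $\mathcal{M}_{d,\infty,\lambda}$ with $\tfrac1d=\tfrac1q+\tfrac1r$, the same dispersive kernel $(t-s)^{-\frac12-\frac{p}{2q}}$, the same beta-function computation yielding the exact weights $t^{-\alpha/2}$ and $t^{-\gamma/2}$, and the same use of Lemma \ref{Bestimate} for the $H_{p,\infty}$ part of the norm. The index bookkeeping you flag indeed works out exactly as you predict, matching the paper's computation.
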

\begin{proof}
	We have that
	\begin{eqnarray}
		B\left(\begin{bmatrix}
			u\\
			\theta
		\end{bmatrix}, \begin{bmatrix}
			v\\
			\xi
		\end{bmatrix} \right)(t) &:=& -\int_0^t \nabla_x e^{-(t-s)L}\begin{bmatrix}\mathbb{P}(u\otimes v)\\
			u\xi
		\end{bmatrix}(s)ds \cr
		&=& -\int_0^t \begin{bmatrix}
			\nabla_x e^{(t-s)\Delta}\mathbb{P}(u\otimes v)\\
			\nabla_x e^{(t-s)\Delta}(u \xi)
		\end{bmatrix}(s)ds\cr
		&=& \begin{bmatrix}
			B_1(u,v)(t)\\
			B_2(u,\xi)(t)
		\end{bmatrix},
	\end{eqnarray}
	where
	\begin{equation}
		B_1(u,v)(t):= -\int_0^t \nabla_x e^{(t-s)\Delta}\mathbb{P}(u\otimes v)(s)ds,\,\, B_2(u,\xi)(t):= \int_0^t \nabla_xe^{(t-s)\Delta}(u \xi)(s)ds.
	\end{equation}
	Let $\dfrac{1}{d}=\dfrac{1}{q}+\dfrac{1}{r}$. By using Assertion (ii) in Lemma  \ref{heatestimate} and the continuous of Leray projector $\mathbb{P}$, we estimate that
	\begin{eqnarray}\label{ine1}
		&&\norm{\begin{bmatrix}
				t^{\frac{\alpha}{2}}B_1(u,v)(t)\\
				t^{\frac{\gamma}{2}}B_2(u,\xi)(t)
		\end{bmatrix}}_{ {{\bf X}_{q,\lambda;r,\lambda}}} \leq C\int_0^t \norm{\begin{bmatrix}
				t^{\frac{\alpha}{2}}(t-s)^{-\frac{1}{2}-\frac{p}{2q}}(u\otimes v)\\
				t^{\frac{\gamma}{2}}(t-s)^{-\frac{1}{2}-\frac{p}{2q}}(u \xi)
			\end{bmatrix}(s)}_{  {{\bf X}_{\frac{q}{2},\lambda;d,\lambda}}}  ds\cr
		&&\leq C\int_0^t \left( t^{\frac{\alpha}{2}}(t-s)^{-\frac{1}{2}-\frac{p}{2q}}\norm{(u\otimes v)(s)}_{\frac{q}{2},\infty,   {\lambda}} +  t^{\frac{\gamma}{2}}(t-s)^{-\frac{1}{2}-\frac{p}{2q}}\norm{(u \xi)(s)}_{d,\infty,   {\lambda}} \right) ds\cr
		&&:= C(J_1 + J_2),
	\end{eqnarray}
	where
	\begin{eqnarray*}
		&&J_1= \int_0^t t^{\frac{\alpha}{2}}(t-s)^{-\frac{1}{2}-\frac{p}{2q}}\norm{(u\otimes v)(s)}_{\frac{q}{2},\infty,   {\lambda}} ds\cr
		&&J_2= \int_0^t t^{\frac{\gamma}{2}}(t-s)^{-\frac{1}{2}-\frac{p}{2q}}\norm{(u \xi)(s)}_{d,\infty,   {\lambda}} ds.
	\end{eqnarray*}
	
From H\"older inequality in Lemma \ref{HolderWM} we can estimate $J_1$ and $J_2$ as follows
	\begin{eqnarray}\label{ine2}
		J_1 &\leq&  \int_0^t t^{\frac{\alpha}{2}}(t-s)^{-\frac{1}{2}-\frac{p}{2q}}\norm{u(s)}_{q,\infty,  {\lambda}}\norm{v(s)}_{q,\infty,  {\lambda}} ds\cr
		&\leq& t^{\frac{\alpha}{2}}\int_0^t(t-s)^{-\frac{1}{2}-\frac{p}{2q}}s^{-\alpha} ds \sup_{t>0}t^{\frac{\alpha}{2}}\norm{u(t)}_{q,\infty,  {\lambda}}\sup_{t>0}t^{\frac{\alpha}{2}}\norm{v(t)}_{q,\infty,  {\lambda}} \cr
	&\leq& t^{\frac{1}{2}-\frac{p}{2q}-\frac{\alpha}{2}}\int_0^1 (1-s)^{-\frac{1}{2}-\frac{p}{2q}}s^{-\alpha} ds \sup_{t>0}t^{\frac{\alpha}{2}}\norm{u(t)}_{q,\infty,  {\lambda}}\sup_{t>0}t^{\frac{\alpha}{2}}\norm{v(t)}_{q,\infty, {\lambda}} \cr
		&\leq& \int_0^1 (1-s)^{-\frac{1}{2}-\frac{p}{2q}}s^{-\alpha} ds \sup_{t>0}t^{\frac{\alpha}{2}}\norm{u(t)}_{q,\infty,  {\lambda}}\sup_{t>0}t^{\frac{\alpha}{2}}\norm{v(t)}_{q,\infty,  {\lambda}} \cr
		&\leq& \left( \int_0^{1/2} (1-s)^{-\frac{1}{2}-\frac{p}{2q}}s^{-\alpha} ds + \int_{1/2}^1 (1-s)^{-\frac{1}{2}-\frac{p}{2q}}s^{-\alpha} ds  \right)\cr
		&&\times \sup_{t>0} \norm{\begin{bmatrix}
				t^{\frac{\alpha}{2}}u(t)\\
				t^{\frac{\gamma}{2}}\theta(t)
		\end{bmatrix}}_{  {{\bf X}_{q,\lambda;r,\lambda}}} 
		\sup_{t>0} \norm{\begin{bmatrix}
				t^{\frac{\alpha}{2}}v(t)\\
				t^{\frac{\gamma}{2}}\xi(t)
		\end{bmatrix}}_{ {{\bf X}_{q,\lambda;r,\lambda}}}\cr
		&\leq& \left( 2^{\frac{1}{2}+\frac{p}{2q}}\int_0^{1/2}s^{-\alpha} ds + 2^{1-\frac{p}{q}}\int_{1/2}^1 (1-s)^{-\frac{1}{2}-\frac{p}{2q}} ds  \right)\cr
		&&\times \sup_{t>0} \norm{\begin{bmatrix}
				t^{\frac{\alpha}{2}}u(t)\\
				t^{\frac{\gamma}{2}}\theta(t)
		\end{bmatrix}}_{ {{\bf X}_{q,\lambda;r,\lambda}}} 
		\sup_{t>0} \norm{\begin{bmatrix}
				t^{\frac{\alpha}{2}}v(t)\\
				t^{\frac{\gamma}{2}}\xi(t)
		\end{bmatrix}}_{  {{\bf X}_{q,\lambda;r,\lambda}}}\cr
		&\leq& \left( \frac{q 2^{\frac{1}{2}-\frac{p}{2q}}}{p} + \frac{2^{\frac{1}{2}-\frac{p}{2q}}}{\frac{1}{2}-\frac{p}{2q}} \right)\sup_{t>0} \norm{\begin{bmatrix}
				t^{\frac{\alpha}{2}}u(t)\\
				t^{\frac{\gamma}{2}}\theta(t)
		\end{bmatrix}}_{  {{\bf X}_{q,\lambda;r,\lambda}}} 
		\sup_{t>0} \norm{\begin{bmatrix}
				t^{\frac{\alpha}{2}}v(t)\\
				t^{\frac{\gamma}{2}}\xi(t)
		\end{bmatrix}}_{  {{\bf X}_{q,\lambda;r,\lambda}}}\cr
		&\leq& C_1 \sup_{t>0} \norm{\begin{bmatrix}
				t^{\frac{\alpha}{2}}u(t)\\
				t^{\frac{\gamma}{2}}\theta(t)
		\end{bmatrix}}_{  {{\bf X}_{q,\lambda;r,\lambda}}} 
		\sup_{t>0} \norm{\begin{bmatrix}
				t^{\frac{\alpha}{2}}v(t)\\
				t^{\frac{\gamma}{2}}\xi(t)
		\end{bmatrix}}_{  {{\bf X}_{q,\lambda;r,\lambda}}},
	\end{eqnarray}
	where
	$$C_1 = \frac{q 2^{\frac{1}{2}-\frac{p}{2q}}}{p} + \frac{2^{\frac{1}{2}-\frac{p}{2q}}}{\frac{1}{2}-\frac{p}{2q}}.$$
By similar arguments, we can estimate that
	\begin{eqnarray}\label{ine3}
		J_2 &\leq& \int_0^1 (1-s)^{-\frac{1}{2}-\frac{p}{2q}}s^{-\frac{\alpha+\gamma}{2}} ds \sup_{t>0}t^{\frac{\alpha}{2}}\norm{u(t)}_{q,\infty,  {\lambda}}\sup_{t>0}t^{\frac{\gamma}{2}}\norm{\xi(t)}_{r,\infty,  {\lambda}} \cr 
		&\leq& C_2 \sup_{t>0} \norm{\begin{bmatrix}
				t^{\frac{\alpha}{2}}u(t)\\
				t^{\frac{\gamma}{2}}\theta(t)
		\end{bmatrix}}_{  {{\bf X}_{q,\lambda;r,\lambda}}} 
		\sup_{t>0} \norm{\begin{bmatrix}
				t^{\frac{\alpha}{2}}v(t)\\
				t^{\frac{\gamma}{2}}\xi(t)
		\end{bmatrix}}_{ {{\bf X}_{q,\lambda;r,\lambda}}},
	\end{eqnarray}
	where
	$$C_2 = \frac{2^{\frac{1}{2}-\frac{p}{2q}}}{\frac{p}{2q}+\frac{p}{2r}} + \frac{2^{\frac{1}{2}-\frac{p}{2r}}}{\frac{1}{2}-\frac{p}{2q}}.$$
	Combining \eqref{ine1}, \eqref{ine2}, \eqref{ine3} and the bilinear estimate \eqref{bestimate} in $H_{p,\infty}$, we obtain the bilinear estimate \eqref{BBestimate} in $H_{q,r,\infty}$.
\end{proof}

\begin{theorem}\label{stability}(Polynomial stability)
Let $n\geqslant 3$, $2<p<q \leq r <\infty$, $r>q'=\dfrac{q}{q-1}$, $p\leq  n$ and $b>\dfrac{p}{2}$ such that $\dfrac{1}{p}<\dfrac{1}{b}+\dfrac{1}{r}<\min\left\{ \dfrac{2}{p} + \dfrac{1}{q},\, 1\right\}$. The mild solution $\begin{bmatrix}
		\hat u\cr \hat \theta
	\end{bmatrix}$ obtained in Theorem \ref{wellposed} (where $\norm{\begin{bmatrix}
			\hat u\cr \hat \theta
	\end{bmatrix}}_{ { {\bf X}_{p,\lambda}}}$ is small enough) is polynomial stable in the sense that: if $\begin{bmatrix}
		v\cr \xi
	\end{bmatrix}$ is  { another} mild solution of the system \eqref{BouEq1} with the initial data $\begin{bmatrix}
		v_0\cr \xi_0
	\end{bmatrix}$ and the fields $g$ and $\omega$ such that $||| g |||_{\beta,b} = \sup_{t>0}{t^\beta}\norm{g(\cdot,t)}_{b,\infty,\lambda}$ and
	$|||g-\omega|||_{\beta,b}=\sup\limits_{t>0} t^{\beta}\norm{g(\cdot,t) - \omega(\cdot,t)}_{b,\infty,  {\lambda}}$ (where $\beta=1-\frac{p}{2b}$) and
	$\norm{\begin{bmatrix}\hat{u}_0-v_0\cr \hat{\theta}_0-\xi_0 \end{bmatrix}}_{   { {\bf X}_{p,\lambda}}}$ are small enough. Then, we have
	\begin{equation}\label{PolyStab}
		\sup_{t>0}\norm{\begin{bmatrix}
				t^{\frac{\alpha}{2}}(\hat{u}(\cdot,t)-v(\cdot,t))\cr t^{\frac{\gamma}{2}}(\hat{\theta}(\cdot,t)-\xi(\cdot,t))
		\end{bmatrix}}_{  {{\bf X}_{q,\lambda;r,\lambda}}} <C,
	\end{equation}
	where $\alpha = 1-\dfrac{p}{q}$, $\gamma = 1-\dfrac{p}{r}$ and $C$ is a positive constant.
\end{theorem}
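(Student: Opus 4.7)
The plan is to set $(W,Z)=(\hat u-v,\hat\theta-\xi)$ and subtract the integral formulations \eqref{mildsol} for $(\hat u,\hat\theta)$ (with field $g$) and $(v,\xi)$ (with field $\omega$). Using bilinearity of $B$ together with the decomposition $T_g(\hat\theta)-T_\omega(\xi)=T_g(Z)+T_{g-\omega}(\xi)$, the common forcing $\mathcal{C}\begin{bmatrix}F\\f\end{bmatrix}$ cancels, producing the perturbation identity
\begin{align*}
\begin{bmatrix}W\\Z\end{bmatrix}(t) &= e^{-tL}\begin{bmatrix}\hat u_0-v_0\\ \hat\theta_0-\xi_0\end{bmatrix} + B\left(\begin{bmatrix}W\\Z\end{bmatrix},\begin{bmatrix}\hat u\\ \hat\theta\end{bmatrix}\right) + B\left(\begin{bmatrix}v\\ \xi\end{bmatrix},\begin{bmatrix}W\\Z\end{bmatrix}\right) \\
&\quad + T_g(Z) + T_{g-\omega}(\xi).
\end{align*}
The claim \eqref{PolyStab} thus reduces to bounding $\|(W,Z)\|_{H_{q,r,\infty}}$ uniformly in time.

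Before invoking \eqref{BBestimate}, I need both $(\hat u,\hat\theta)$ and $(v,\xi)$ to lie in $H_{q,r,\infty}$ with controlled norms. My plan is to bootstrap this by rerunning the contraction argument of Theorem \ref{wellposed} in the refined space $H_{p,\infty}\cap H_{q,r,\infty}$: the heat kernel estimate \eqref{disp} with $\tau_{p,\lambda}-\tau_{q,\lambda}=\alpha$ and $\tau_{p,\lambda}-\tau_{r,\lambda}=\gamma$ controls the initial data; \eqref{BBestimate} controls the bilinear term; and the Morrey--Lorentz H\"older inequality \eqref{HolderWM} combined with \eqref{disp} controls $T_g(\cdot)$ and the forcing $\mathcal{C}\begin{bmatrix}F\\f\end{bmatrix}$. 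The exponent constraints $\frac{1}{p}<\frac{1}{b}+\frac{1}{r}<\min\{\frac{2}{p}+\frac{1}{q},1\}$ are precisely what ensures that the Beta-type integrals arising in these estimates converge at both endpoints and yield the required decay $t^{-\alpha/2}$ (resp. $t^{-\gamma/2}$) on the $q$- (resp. $r$-) component. Under the smallness hypotheses this produces bounds on $\|(\hat u,\hat\theta)\|_{H_{q,r,\infty}}$ and $\|(v,\xi)\|_{H_{q,r,\infty}}$.

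With that regularity secured I take the $H_{q,r,\infty}$ norm in the perturbation identity. The semigroup term is bounded by $C\|(\hat u_0-v_0,\hat\theta_0-\xi_0)\|_{{\bf X}_{p,\lambda}}$ via \eqref{disp}; the two bilinear contributions are controlled by \eqref{BBestimate} by $K\bigl(\|(\hat u,\hat\theta)\|_{H_{q,r,\infty}}+\|(v,\xi)\|_{H_{q,r,\infty}}\bigr)\|(W,Z)\|_{H_{q,r,\infty}}$; and by the same H\"older--dispersive mechanism as in the proof of Theorem \ref{well-posedness}, $T_g(Z)$ and $T_{g-\omega}(\xi)$ contribute $C\kappa|||g|||_{\beta,b}\|(W,Z)\|_{H_{q,r,\infty}}$ and $C\kappa|||g-\omega|||_{\beta,b}\|(v,\xi)\|_{H_{q,r,\infty}}$ respectively, via the Beta-function identity $\int_0^t(t-s)^{-\frac{p}{2}(1/b+1/r-1/q)}s^{-\gamma/2-(1-p/(2b))}\,ds=Ct^{-\alpha/2}$. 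The smallness hypotheses on $\|(\hat u,\hat\theta)\|$, $\|(v,\xi)\|$ and $|||g|||_{\beta,b}$ then allow absorbing every coefficient of $\|(W,Z)\|_{H_{q,r,\infty}}$ from the right to the left side, yielding \eqref{PolyStab}. The main obstacle is the bootstrap step: verifying that the periodic solution of Theorem \ref{wellposed} actually sits in $H_{q,r,\infty}$, which requires simultaneously tracking the two distinct time-weight exponents $\alpha/2$ and $\gamma/2$ through every term of the integral equation, and is precisely what the intricate admissible range on $(p,q,r,b)$ is engineered to allow.
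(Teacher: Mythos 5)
Your overall scheme (subtract the two integral formulations, estimate the difference in $H_{q,r,\infty}$ using the dispersive estimate, the bilinear estimate \eqref{BBestimate} and Beta-type bounds for the coupling terms, then use smallness) is the same as the paper's, and your exponent bookkeeping is right: the identity $\int_0^t(t-s)^{-\frac{p}{2}(\frac1b+\frac1r-\frac1q)}s^{-\frac{\gamma}{2}-(1-\frac{p}{2b})}ds=Ct^{-\alpha/2}$ and its convergence are exactly what the hypothesis $\frac1p<\frac1b+\frac1r<\min\{\frac2p+\frac1q,1\}$ guarantees, matching the paper's estimates of $I_4$, $I_5$, $I_6$. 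The genuine gap is in how you treat the perturbed solution $(v,\xi)$. Your perturbation identity retains the terms $B\left(\begin{bmatrix}v\\ \xi\end{bmatrix},\begin{bmatrix}W\\ Z\end{bmatrix}\right)$ and $T_{g-\omega}(\xi)$, so closing the estimate requires $\sup_{t>0}\left\Vert\begin{bmatrix}t^{\alpha/2}v\\ t^{\gamma/2}\xi\end{bmatrix}\right\Vert_{{\bf X}_{q,\lambda;r,\lambda}}$ to be finite, and for the absorption of the coefficient $K\Vert(v,\xi)\Vert_{H_{q,r,\infty}}$ it must in addition be small. Neither is a hypothesis of Theorem \ref{stability}: only the data difference, $|||g|||_{\beta,b}$ and $|||g-\omega|||_{\beta,b}$ are small, while $(v,\xi)$ is an arbitrary mild solution. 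Your proposed bootstrap (``rerun the contraction in $H_{p,\infty}\cap H_{q,r,\infty}$'') would produce \emph{some} small solution with data $(v_0,\xi_0)$ in the refined space, but to conclude it coincides with the given $(v,\xi)$ you would need an unconditional uniqueness theorem for mild solutions in $BC(\mathbb{R}_+,{\bf X}_{p,\lambda})$, which is neither assumed nor among the tools used here. (For the reference solution $(\hat u,\hat\theta)$ the bootstrap is legitimate, since that solution is the one actually produced by the fixed-point construction; the paper tacitly assumes $\Vert(\hat u,\hat\theta)\Vert_{H_{q,r,\infty}}$ small, so making that step explicit is a reasonable addition.)

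The paper sidesteps this by a different algebraic decomposition in \eqref{StaEq}: the quadratic difference is written through $(\hat u-v)\otimes\hat u$, $(v-\hat u)\otimes(\hat u-v)$ and $\hat u\otimes(\hat u-v)$, and the coupling through $T_g(\hat\theta-\xi)+T_{g-\omega}(\xi-\hat\theta)+T_{g-\omega}(\hat\theta)$, so that only the constructed small solution $(\hat u,\hat\theta)$ and the difference $(W,Z)$ ever enter the estimates; the price is a term quadratic in $(W,Z)$, which is handled not by linear absorption but by a fixed-point argument for the map $\Phi$ on a small ball $\mathbb{B}_\rho\subset H_{q,r,\infty}$. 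You can repair your argument with the algebraic substitutions $B\left(\begin{bmatrix}v\\ \xi\end{bmatrix},\begin{bmatrix}W\\ Z\end{bmatrix}\right)=B\left(\begin{bmatrix}\hat u\\ \hat\theta\end{bmatrix},\begin{bmatrix}W\\ Z\end{bmatrix}\right)-B\left(\begin{bmatrix}W\\ Z\end{bmatrix},\begin{bmatrix}W\\ Z\end{bmatrix}\right)$ and $T_{g-\omega}(\xi)=T_{g-\omega}(\xi-\hat\theta)+T_{g-\omega}(\hat\theta)$, after which your analytic estimates carry over verbatim and the argument aligns with the paper's; note that even then one should address (as the paper does only implicitly) why the actual difference agrees with the fixed point of the perturbation equation in the small ball.
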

\begin{proof}
Since $\begin{bmatrix}
		\hat u\cr \hat \theta
	\end{bmatrix}$ and $\begin{bmatrix}
		v\cr \xi
	\end{bmatrix}$ are solutions of the equation \eqref{mildsol},	we have $\begin{bmatrix}
		\hat u-v\cr \hat \theta-\xi
	\end{bmatrix}$ satisfies the following integral equation
	\begin{eqnarray}\label{StaEq}
		\begin{bmatrix}
			\hat u-v\\
			\hat \theta-\xi 
		\end{bmatrix}(t) &=& e^{-tL}\begin{bmatrix}
			\hat{u}_0-v_0\\
			\hat{\theta}_0-\xi_0
		\end{bmatrix} + B\left(\begin{bmatrix}
			\hat u\\
			\hat \theta
		\end{bmatrix}, \begin{bmatrix}
			\hat u\\
			\hat \theta
		\end{bmatrix} \right)(t) - B\left(\begin{bmatrix}
			v\\
			\xi
		\end{bmatrix}, \begin{bmatrix}
			v\\
			\xi
		\end{bmatrix} \right)(t) + T_g(\hat \theta)(t)- T_\omega(\xi)(t)\cr
		&=&e^{-tL}\begin{bmatrix}
			\hat u_0-v_0\\
			\hat \theta_0-\xi_0
		\end{bmatrix} + \int_0^t \nabla_x e^{-(t-s)L} \left( \begin{bmatrix}
			\mathbb{P}(\hat u\otimes \hat u)\\
			\hat u\hat \theta
		\end{bmatrix}(s) - \begin{bmatrix}
			\mathbb{P}(v\otimes v)\\
			v\xi
		\end{bmatrix}(s) \right)ds \cr
		&&+ T_g(\hat \theta)(t)-T_\omega(\xi)(t)\cr
		&=&e^{-tL}\begin{bmatrix}
			\hat{u}_0-v_0\\
			\hat{\theta}_0-\xi_0
		\end{bmatrix} + \int_0^t \nabla_x e^{-(t-s)L} \left( \begin{bmatrix}
			\mathbb{P}((\hat u-v)\otimes \hat u)\\
			(\hat u-v)\hat \theta
		\end{bmatrix}(s) + \begin{bmatrix}
			\mathbb{P}(v\otimes (\hat u-v))\\
			v(\hat \theta-\xi)
		\end{bmatrix}(s) \right)ds \cr
		&&+ T_g(\hat \theta-\xi)(t)+T_{g-\omega}(\xi)(t)\cr
		&=&e^{-tL}\begin{bmatrix}
			\hat{u}_0-v_0\\
			\hat{\theta}_0-\xi_0
		\end{bmatrix} + \int_0^t \nabla_x e^{-(t-s)L} \left( \begin{bmatrix}
			\mathbb{P}((\hat u-v)\otimes \hat u)\\
			(\hat u-v)\hat \theta
		\end{bmatrix}(s) + \begin{bmatrix}
			\mathbb{P}((v-\hat u)\otimes (\hat u-v))\\
			(v-\hat u)(\hat \theta-\xi)
		\end{bmatrix}(s) \right)ds \cr
		&&+ \int_0^t \nabla_x e^{-(t-s)L} \begin{bmatrix}
			\mathbb{P}(\hat u\otimes (\hat u-v))\\
			\hat u(\hat \theta-\xi)
		\end{bmatrix}(s) ds \cr
		&&+ T_g(\hat \theta-\xi)(t)+T_{g-\omega}(\xi - \hat\theta)(t) + T_{g-\omega}(\hat\theta)(t).
	\end{eqnarray}
	
	To prove the polynomial stability \eqref{PolyStab}, we show that integral equation \eqref{StaEq} has a mild solution in a small ball of $H_{q,r,\infty}$. First, we have
	\begin{eqnarray}
		\norm{\begin{bmatrix}
				t^{\frac{\alpha}{2}}(\hat u-v)\\ 
				t^{\frac{\gamma}{2}}(\hat\theta-\xi) 
			\end{bmatrix}(t)}_{  {{\bf X}_{q,\lambda;r,\lambda}}} &=& \norm{e^{-tL}\begin{bmatrix}
				t^{\frac{\alpha}{2}}(\hat{u}_0-v_0)\\
				t^{\frac{\gamma}{2}}(\hat{\theta}_0-\xi_0)
		\end{bmatrix}}_{  {{\bf X}_{q,\lambda;r,\lambda}}} \cr
		&&+ \int_0^t \norm{\nabla_x e^{-(t-s)L} \begin{bmatrix}
				t^{\frac{\alpha}{2}}\mathbb{P}((\hat u-v)\otimes \hat u)\\
				t^{\frac{\gamma}{2}}(\hat u-v)\theta
			\end{bmatrix}(s)}_{ {{\bf X}_{q,\lambda;r,\lambda}}} ds \cr
		&& + \int_0^t \norm{\nabla_x e^{-(t-s)L}\begin{bmatrix}
				t^{\frac{\alpha}{2}}\mathbb{P}((v-\hat u)\otimes (\hat u-v))\\
				t^{\frac{\gamma}{2}}(v-\hat u)(\theta-\xi)
			\end{bmatrix}(s)}_{  {{\bf X}_{q,\lambda;r,\lambda}}} ds \cr
		&& + \int_0^t \norm{\nabla_x e^{-(t-s)L} \begin{bmatrix}
				t^{\frac{\alpha}{2}}\mathbb{P}(\hat u\otimes (\hat u-v))\\
				t^{\frac{\gamma}{2}}\hat u(\hat \theta-\xi)
			\end{bmatrix}(s)}_{  {{\bf X}_{q,\lambda;r,\lambda}}} ds \cr
		&&+ \norm{t^{\frac{\alpha}{2}}T_g(\hat\theta-\xi)(t)}_{  {{\bf X}_{q,\lambda;r,\lambda}}} \cr
		&&+ \norm{t^{\frac{\alpha}{2}}T_{g-\omega}(\xi-\hat\theta)(t)}_{  {{\bf X}_{q,\lambda;r,\lambda}}}\cr
		&&+ \norm{t^{\frac{\alpha}{2}}T_{g-\omega}(\hat\theta)(t)}_{  {{\bf X}_{q,\lambda;r,\lambda}}}\cr
		&:=& I_0 + I_1 + I_2 + I_3 + I_4+I_5+I_6,
	\end{eqnarray}
	where
	\begin{eqnarray}
		&&I_0 = \norm{e^{-tL}\begin{bmatrix}
				t^{\frac{\alpha}{2}}(\hat{u}_0-v_0)\\
				t^{\frac{\gamma}{2}}(\hat{\theta}_0-\xi_0)
		\end{bmatrix}}_{  {{\bf X}_{q,\lambda;r,\lambda}}}\cr
		&& I_1 = \int_0^t \norm{\nabla_x e^{-(t-s)L} \begin{bmatrix}
				t^{\frac{\alpha}{2}}\mathbb{P}((\hat u-v)\otimes \hat u)\\
				t^{\frac{\gamma}{2}}(\hat u-v)\hat\theta
			\end{bmatrix}(s)}_{  {{\bf X}_{q,\lambda;r,\lambda}}}ds \cr
		&& I_2 = \int_0^t \norm{\nabla_x e^{-(t-s)L}\begin{bmatrix}
				t^{\frac{\alpha}{2}}\mathbb{P}((v-\hat u)\otimes (\hat u-v))\\
				t^{\frac{\gamma}{2}}(v-\hat u)(\hat\theta-\xi)
			\end{bmatrix}(s)}_{ {{\bf X}_{q,\lambda;r,\lambda}}} ds \cr
		&& I_3 = \int_0^t \norm{\nabla_x e^{-(t-s)L}\begin{bmatrix}
				t^{\frac{\alpha}{2}}\mathbb{P}(\hat u\otimes (\hat u-v))\\
				t^{\frac{\gamma}{2}}\hat u(\hat\theta-\xi)
		\end{bmatrix}(s)}_{  {{\bf X}_{q,\lambda;r,\lambda}}} ds \cr
		&& I_4 = \norm{t^{\frac{\alpha}{2}}T_g(\hat\theta-\xi)(t)}_{  {{\bf X}_{q,\lambda;r,\lambda}}}\cr
		&& I_5 = \norm{t^{\frac{\alpha}{2}}T_{g-\omega}(\xi-\hat\theta)(t)}_{  {{\bf X}_{q,\lambda;r,\lambda}}}\cr
		&& I_6 = \norm{t^{\frac{\alpha}{2}}T_{g-\omega}(\hat\theta)(t)}_{  {{\bf X}_{q,\lambda;r,\lambda}}}.
	\end{eqnarray}
	By using estimate \eqref{disp}, $I_0$ can be estimated as follows
	\begin{equation}\label{Ine1}
		I_0 \leq C\norm{\begin{bmatrix}
				\hat{u}_0-v_0\\
				\hat{\theta}_0-\xi_0
		\end{bmatrix}}_{  { {\bf X}_{p,\lambda}}}. 
	\end{equation}
	By using the bilinear estimate in $H_{q,r,\infty}$ we have that
	\begin{eqnarray}\label{Ine2}
		&&I_1, \, I_3 \leq K\norm{\begin{bmatrix}
				\hat u-v\\
				\hat\theta -\xi
		\end{bmatrix}}_{H_{q,r,\infty}}\norm{\begin{bmatrix}
				\hat u\\
				\hat\theta
		\end{bmatrix}}_{H_{q,r,\infty}}\cr
		&&I_2 \leq K\norm{\begin{bmatrix}
				\hat u-v\\
				\hat\theta -\xi
		\end{bmatrix}}^2_{H_{q,r,\infty}}
	\end{eqnarray}
	The terms $I_4$, $I_5$ and $I_6$ can be estimated as
	\begin{eqnarray}\label{Ine3}
		I_4 &\leq& t^{\frac{\alpha}{2}}\int_0^t (t-s)^{1-\frac{p}{2b}+\frac{\gamma}{2}-\frac{\alpha}{2}-1}s^{-1+\frac{p}{2b}-\frac{\gamma}{2}}s^{\frac{\gamma}{2}}\norm{(\hat\theta-\xi)(s)}_{r,\infty,  {\lambda}}ds \sup_{t>0}t^{1-\frac{p}{2b}}\norm{g(t)}_{b,\infty,  {\lambda}}\cr
		&\leq& \int_0^1 (t-s)^{-\frac{p}{2b}+\frac{\gamma-\alpha}{2}}s^{-1+\frac{p}{2b}-\frac{\gamma}{2}}(ts)^{\frac{\gamma}{2}}\norm{(\hat\theta-\xi)(ts)}_{r,\infty,  {\lambda}}ds \sup_{t>0}t^{1-\frac{p}{2b}}\norm{g(t)}_{b,\infty,  {\lambda}}\cr
		&\leq& L |||g|||_{\beta,b} \norm{\begin{bmatrix}
				\hat u-v\\
				\hat\theta -\xi
		\end{bmatrix}}_{H_{q,r,\infty}},
	\end{eqnarray}
	\begin{eqnarray}\label{Ine4}
		I_5 &\leq& t^{\frac{\alpha}{2}}\int_0^t (t-s)^{1-\frac{p}{2b}+\frac{\gamma}{2}-\frac{\alpha}{2}-1}s^{-1+\frac{p}{2b}-\frac{\gamma}{2}}s^{1-\frac{p}{2b}}\norm{(g-\omega)(s)}_{b,\infty,  {\lambda}}ds \sup_{t>0}t^{\frac{\gamma}{2}}\norm{(\xi-\hat\theta)(t)}_{r,\infty,  {\lambda}}\cr
		&\leq& \int_0^1 (t-s)^{-\frac{p}{2b}+\frac{\gamma-\alpha}{2}}s^{-1+\frac{p}{2b}-\frac{\gamma}{2}}(ts)^{1-\frac{p}{2b}}\norm{(g-\omega)(ts)}_{b,\infty,  {\lambda}}ds \sup_{t>0}t^{\frac{\gamma}{2}}\norm{(\xi-\hat\theta)(t)}_{r,\infty,  {\lambda}}\cr
		&\leq& M \sup_{t>0}t^{1-\frac{p}{2b}}\norm{(g-\omega)(t)}_{b,\infty,  {\lambda}} \norm{\begin{bmatrix}
				\hat u-v\\
				\hat\theta -\xi
		\end{bmatrix}}_{H_{q,r,\infty}}\cr
	&\leq& M |||g-\omega|||_{\beta,b} \norm{\begin{bmatrix}
				\hat u-v\\
				\hat\theta -\xi
		\end{bmatrix}}_{H_{q,r,\infty}}
	\end{eqnarray}
	and
	\begin{eqnarray}\label{Ine5}
		I_6 &\leq& t^{\frac{\alpha}{2}}\int_0^t (t-s)^{1-\frac{p}{2b}+\frac{\gamma}{2}-\frac{\alpha}{2}-1}s^{-1+\frac{p}{2b}-\frac{\gamma}{2}}s^{1-\frac{p}{2b}}\norm{(g-\omega)(s)}_{b,\infty,  {\lambda}}ds \sup_{t>0}t^{\frac{\gamma}{2}}\norm{\hat\theta(t)}_{r,\infty,  {\lambda}}\cr
	&\leq& \int_0^1 (t-s)^{-\frac{p}{2b}+\frac{\gamma-\alpha}{2}}s^{-1+\frac{p}{2b}-\frac{\gamma}{2}}(ts)^{1-\frac{p}{2b}}\norm{(g-\omega)(ts)}_{b,\infty,  {\lambda}}ds \sup_{t>0}t^{\frac{\gamma}{2}}\norm{\hat\theta(t)}_{r,\infty,  {\lambda}}\cr
		&\leq& M \sup_{t>0}t^{1-\frac{p}{2b}}\norm{(g-\omega)(t)}_{b,\infty,  {\lambda}} \norm{\begin{bmatrix}
				\hat u\\
				\hat\theta
		\end{bmatrix}}_{H_{q,r,\infty}}\cr
		&\leq& M |||g-\omega|||_{\beta,b} \norm{\begin{bmatrix}
				\hat u\\
				\hat\theta
		\end{bmatrix}}_{H_{q,r,\infty}}.
	\end{eqnarray}
	Here, we notice that the constants $L$ and $M$ are finite since the fact that $2<p<q \leq r <\infty$, $r>q':=\dfrac{q}{q-1}$ and $p\leq \min \left\{ n,\, 2b\right\}$, $\dfrac{1}{p}<\dfrac{1}{b}+\dfrac{1}{r}<\min\left\{ \dfrac{2}{p} + \dfrac{1}{q},\, 1\right\}$.
	
	Now we consider the map: $\Phi: H_{q,r,\infty} \to H_{q,r,\infty}$ given by
	\begin{eqnarray}
		\Phi \begin{bmatrix}U\\
			\Theta
		\end{bmatrix}&:=& e^{-tL}\begin{bmatrix}
			\hat{u}_0 - v_0\\
			\hat{\theta}_0 -\xi_0
		\end{bmatrix} + \int_0^t \nabla_x e^{-(t-s)L} \left( \begin{bmatrix}
			\mathbb{P}(U\otimes \hat u)\\
			U\hat \theta
		\end{bmatrix}(s) - \begin{bmatrix}
			\mathbb{P}(U\otimes U)\\
			U\Theta
		\end{bmatrix}(s) \right)ds \cr
		&&+ \int_0^t \nabla_x e^{-(t-s)L} \begin{bmatrix}
			\mathbb{P}(\hat u\otimes U)\\
			\hat u\Theta
		\end{bmatrix}(s) ds \cr
		&&+ T_g(\Theta)(t) - T_{g-\omega}(\Theta)(t) + T_{g-\omega}(\hat\theta)(t).
	\end{eqnarray}
	
	Using estimates \eqref{Ine1}, \eqref{Ine2}, \eqref{Ine3}, \eqref{Ine4} and \eqref{Ine5}, we see that for all $\begin{bmatrix}U\\
		\Theta
	\end{bmatrix}$ belongs to the ball with radius $\rho$ and centered at $\begin{bmatrix}0\\
		0
	\end{bmatrix}$, $\mathbb{B}_\rho \subset H_{q,r,\infty}$, then
	\begin{eqnarray}
		\norm{\Phi\begin{bmatrix}U\\
				\Theta
		\end{bmatrix}}_{H_{q,r,\infty}} &\leq& C\norm{\begin{bmatrix}\hat{u}_0-v_0\\
				\hat{\theta}_0-\xi_0
		\end{bmatrix}}_{  { {\bf X}_{p,\lambda}}} + 2K\norm{\begin{bmatrix}
				U\\
				\Theta
		\end{bmatrix}}_{H_{q,r,\infty}}\norm{\begin{bmatrix}
				\hat u\\
				\hat\theta
		\end{bmatrix}}_{H_{q,r,\infty}} + K\norm{\begin{bmatrix}
				U\\
				\Theta
		\end{bmatrix}}_{H_{q,r,\infty}}^2 \cr
		&&+ L\sup_{t>0}t^{1-\frac{p}{2b}}\norm{g(t)}_{b,\infty,  {\lambda}} \norm{\begin{bmatrix}
				U\\
				\Theta
		\end{bmatrix}}_{H_{q,r,\infty}}\cr
		&&+ M \sup_{t>0}t^{1-\frac{p}{2b}}\norm{(g-\omega)(t)}_{b,\infty,  {\lambda}} \norm{\begin{bmatrix}
				U\\
				\Theta
		\end{bmatrix}}_{H_{q,r,\infty}} \cr
		&&+ M \sup_{t>0}t^{1-\frac{p}{2b}}\norm{(g-\omega)(t)}_{b,\infty,  {\lambda}} \norm{\begin{bmatrix}
				\hat u\\
				\hat\theta
		\end{bmatrix}}_{H_{q,r,\infty}}\cr
		&\leq& C\norm{\begin{bmatrix}\hat{u}_0-v_0\\
				\hat{\theta}_0-\xi_0
		\end{bmatrix}}_{  { {\bf X}_{p,\lambda}}} + 2K\norm{\begin{bmatrix}
				U\\
				\Theta
		\end{bmatrix}}_{H_{q,r,\infty}}\norm{\begin{bmatrix}
				\hat u\\
				\hat\theta
		\end{bmatrix}}_{H_{q,r,\infty}} + K\norm{\begin{bmatrix}
				U\\
				\Theta
		\end{bmatrix}}_{H_{q,r,\infty}}^2 \cr
		&&+ L|||g|||_{\beta,b} \norm{\begin{bmatrix}
				U\\
				\Theta
		\end{bmatrix}}_{H_{q,r,\infty}} + M |||g-\omega|||_{\beta,b} \norm{\begin{bmatrix}
				U\\
				\Theta
		\end{bmatrix}}_{H_{q,r,\infty}} + M |||g-\omega|||_{\beta,b} \norm{\begin{bmatrix}
				\hat u\\
				\hat\theta
		\end{bmatrix}}_{H_{q,r,\infty}}
	\end{eqnarray}
	Hence, for $\rho$, $\norm{\begin{bmatrix}
			\hat u\\
			\hat\theta
	\end{bmatrix}}_{H_{q,r,\infty}}$, $\norm{\begin{bmatrix}\hat{u}_0-v_0\\
			\hat{\theta}_0-\xi_0
	\end{bmatrix}}_{  { {\bf X}_{p,\lambda}}}$, $|||g|||_{\beta,b}$ and $|||g-\omega|||_{\beta,b}$ are small enough, then $\norm{\Phi\begin{bmatrix}U\\
			\Theta
	\end{bmatrix}}_{H_{q,r,\infty}} \leq \rho$, i.e., $\Phi(\mathbb{B}_\rho)\subset \mathbb{B}_\rho$.
	
	Furthermore, for $\begin{bmatrix}U_1\\
		\Theta_1
	\end{bmatrix}, \,\begin{bmatrix}U_2\\
		\Theta_2
	\end{bmatrix} \in \mathbb{B}_\rho $ we have that 
	\begin{eqnarray}
		\norm{\Phi\begin{bmatrix}U_1-U_2\\
				\Theta_1-\Theta_2
		\end{bmatrix}}_{H_{q,r,\infty}} &\leq& 2K\norm{\begin{bmatrix}
				U_1-U_2\\
				\Theta_1-\Theta_2
		\end{bmatrix}}_{H_{q,r,\infty}}\norm{\begin{bmatrix}
				\hat u\\
				\hat\theta
		\end{bmatrix}}_{H_{q,r,\infty}} + K\norm{\begin{bmatrix}
				U_1-U_2\\
				\Theta_1-\Theta_2
		\end{bmatrix}}_{H_{q,r,\infty}}^2 \cr
		&&+ L|||g|||_{\beta,b} \norm{\begin{bmatrix}
				U_1-U_2\\
				\Theta_1-\Theta_2
		\end{bmatrix}}_{H_{q,r,\infty}} + M |||g-\omega|||_{\beta,b} \norm{\begin{bmatrix}
				U_1-U_2\\
				\Theta_1-\Theta_2
		\end{bmatrix}}_{H_{q,r,\infty}} \cr
		&\leq& P\norm{\begin{bmatrix}
				U_1-U_2\\
				\Theta_1-\Theta_2
		\end{bmatrix}}_{H_{q,r,\infty}},
	\end{eqnarray}
	where
	$$P=2K\norm{\begin{bmatrix}
			\hat u\\
			\hat\theta
	\end{bmatrix}}_{H_{q,r,\infty}} + K\rho + L|||g|||_{\beta,b} +  M |||g-\omega|||_{\beta,b}<1$$
provided by $\rho$, $\norm{\begin{bmatrix}
			\hat u\\
			\hat\theta
	\end{bmatrix}}_{H_{q,r,\infty}}$, $\norm{\begin{bmatrix}\hat{u}_0-v_0\\
			\hat{\theta}_0-\xi_0
	\end{bmatrix}}_{  { {\bf X}_{p,\lambda}}}$, $|||g|||_{\beta,b}$ and $|||g-\omega|||_{\beta,b}$ are small enough.
	
	Therefore, $\Phi$ is a contraction for such values of $\rho$, $\norm{\begin{bmatrix}
			\hat u\\
			\hat\theta
	\end{bmatrix}}_{H_{q,r,\infty}}$, $\norm{\begin{bmatrix}\hat{u}_0-v_0\\
			\hat{\theta}_0-\xi_0
	\end{bmatrix}}_{  { {\bf X}_{p,\lambda}}}$, $|||g|||_{\beta,b}$ and $|||g-\omega|||_{\beta,b}$. Hence, by the fixed point argument, 
	there exists a unique point $\begin{bmatrix}
		U\\
		\Theta
	\end{bmatrix}\in \mathbb{B}_\rho$. This point is also a unique solution of integral equation \eqref{StaEq} in $\mathbb{B}_\rho\in H_{q,r,\infty}$. This follows the polynomial stability \eqref{PolyStab}, and our proof is completed. 
\end{proof}

\begin{remark}
\begin{itemize}
\item[(i)]\label{Rem-Theo-1} (External force) The condition of  $g$
given in our main theorems (Theorem \ref{PeriodicLinearCasse}, Theorem \ref{wellposed} and Theorem \ref{stability}) 
{does not cover the case of gravitational field $g$ for specific numbers
$p$ and $b$. In fact, if we consider the gravitational field $g(x)=G\dfrac{x}{|x|^3}$, then $g\in L^{\frac{n}{2},\infty}(\mathbb{R}^n) = \mathcal{M}_{\frac{n}{2},\infty,0}(\mathbb{R}^n)\hookrightarrow \mathcal{M}_{\frac{p}{2},\infty,n-p}$ for $2<p\leq n$, where $G$ is the gravitational constant. We can choose $p=n$ and $b=\frac{n}{2}$, then we get
$$\limsup_{t\to +\infty}t^{1-\frac{p}{2b}}\norm{g}_{b,\infty,\lambda} = \norm{g}_{\frac{n}{2},\infty,0}\limsup_{t\to +\infty}t^0 \simeq G,$$
which can not small enough as we desire in the proofs of Theorems \ref{PeriodicLinearCasse}, \ref{wellposed} and \ref{stability}. However, these theorems are still valid if we assume that the coefficent of volume expansion $\kappa$ is
small enough. This condition guarantees the proofs of above theorems still hold since it is appeared the term $\kappa \limsup\limits_{t\to +\infty}t^{1-\frac{p}{2b}}\norm{g}_{b,\infty,\lambda}\simeq \kappa G$ in the proofs of above theorems (see the same in B\'enard problem in \cite[Corollary 2]{Fe2010}).
}

\item[(ii)] Since the weak-Morrey spaces are larger than weak-Lorentz and Morrey spaces, {our stability results extend the ones obtained in previous works \cite{Al2011,Fe2006,Fe2010,HuyXuan2022}. Indeed, the works \cite{Fe2006,HuyXuan2022} and \cite{Fe2010} established the stability for the Boussinesq  and stationary Boussinesq system in the framework of weak-$L^p$ spaces. Both of these works obtained the global well-posedness of small mild solutions for Boussinesq system in $BC(\mathbb{R}_+,\,L^{n,\infty}_\sigma(\Omega)\times L^{n,\infty}(\Omega))$ (where $\Omega$ is unbounded domains of $\mathbb{R}^n$ or whole space $\mathbb{R}^n$ with $n\geqslant 3$). Moreover, the solutions obtained in \cite{Fe2006,Fe2010,HuyXuan2022} are stable in $BC(\mathbb{R}_+,\,L^{r,\infty}_\sigma(\Omega)\times L^{r,\infty}(\Omega))$  with decay $t^{-\frac{1}{2}\left(1-\frac{n}{r}\right)}$ for $t>0$ and $r>n$. This is a special case of our stability result obtained in Theorem \ref{stability}. Indeed, in Theorem \ref{stability} we can choose $p=n$ and $q=r$, then we get $\alpha=\gamma = \frac{1}{2}\left( 1 - \frac{n}{r}\right)$. Therefore, we obtain the same decay $t^{-\frac{1}{2}\left( 1 - \frac{n}{r}\right)}$ of stable results in \cite{Fe2006,Fe2010,HuyXuan2022}. \\
On the other hand, the work \cite{Al2011} established the stability for Boussinesq system in the framework of Morrey space. In particular,
the author in \cite{Al2011} obtained the global well-posedness in $BC(\mathbb{R}_+, \mathcal{M}^\sigma_{p,n-p}(\mathbb{R}^n)\times \mathcal{M}_{p,n-p}(\mathbb{R}^n))$ and then proved the stability of the obtained solutions in $BC(\mathbb{R}_+, \, \mathcal{M}^\sigma_{q,n-p}(\mathbb{R}^n)\times \mathcal{M}_{r,n-p}(\mathbb{R}^n))$ 
with the stability decays of solutions which are exactly our decay \eqref{PolyStab} obtained in Theorem \ref{stability} (for more details, see Theorem 4.1 in \cite{Al2011}). Since we have the strict inclusion $\mathcal{M}_{p,n-p}(\mathbb{R}^n)\hookrightarrow \mathcal{M}_{p,\infty,n-p}(\mathbb{R}^n)$, our stability result can be considered as an extension of \cite{Al2011} in the sense of extended phase spaces.
}
\end{itemize}
\end{remark}

\subsection{Periodic solutions of Navier-Stokes equations revisited}
If we consider the zero-temperature case, i.e., $\theta=0$, then Boussinesq system
(\ref{BouEq}) becomes Navier-Stokes equations.
\begin{equation}\label{NSE}
\left\{
\begin{array}
[c]{rll}%
u_{t}-\Delta u+\mathbb{P}\operatorname{div}(u\otimes u)\!\! & = \mathbb{P}\dive F\quad & x\in\mathbb{R}^{n},\,t>0,\hfill\\
\operatorname{div}u\!\! & =\;0\quad & x\in\mathbb{R}^{n},\,t\geqslant 0,\\
u(x,0)\!\! & =\;u_{0}(x)\quad & x\in\mathbb{R}^{n}
\end{array}
\right.
\end{equation}
The existence and stability of periodic mild solutions for Navier-Stokes equations in weak-Lorentz spaces $L^{n,\infty}(\mathbb{R}^n)$ were established in some previous works (see for example \cite{Huy2014,Ya2000} and references therein). Applying Theorem \ref{wellposed} and Theorem \ref{stability}, we can extend these results on weak-Morrey spaces $\mathcal{M}_{p,\infty,n-p}$ which are larger than weak-Lorentz spaces $L^{n,\infty}(\mathbb{R}^n)$.
Our theorem is stated as follows:
\begin{theorem}\label{NavierStokes}
Let $n\geqslant 3$ and $2<p\leq n$. Setting $\lambda=n-p$ and assume that the external force $F
	\in BC(\mathbb R_+; \mathcal{M}^\sigma_{\frac{p}{2},\infty,\lambda})$ 
  is a $T$-periodic function with respect to the time. 
\begin{itemize}
\item[(i)]  The well-posedness of periodic mild solutions of Navier-Stokes equations is stated as: if  {the norm $\norm{F}_{BC(\r_+,\cal M^\sigma_ {\frac{p}{2},\infty,\lambda})}$ is small enough, then there exists a unique initial data $\hat u_0\in \cal M^\sigma_ {p,\infty,\lambda}$ which guarantees that for this initial data the Navier-Stokes equation \eqref{NSE} has a unique periodic mild solution $\hat u$ with the periodicity $T$} in a small ball of weak-Morrey space $\mathcal{M}^\sigma_{p,\infty,\lambda}$.

\item[(ii)] Consider $q$ such that $2<p<q<\infty$. The periodic mild solution $\hat u$ obtained in Assertion (i) is polynomial stable in the sense that: if $v$ is    {another} bounded mild solution of the Cauchy problem of Navier-Stokes equations\eqref{NSE}
with the initial $v_0$ such that $\norm{\hat{u}_0-v_0}_{p,\infty,\lambda}$ is small enough. Then, we have
\begin{equation*} 
\sup_{t>0}\norm{t^{\frac{\alpha}{2}}(\hat{u}(\cdot,t)-v(\cdot,t))}_{q,\infty,\lambda} <C,
\end{equation*}
where $\alpha = 1-\dfrac{p}{q}$ and $C$ is a positive constant.
\end{itemize}
\end{theorem}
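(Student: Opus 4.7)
The plan is to deduce Theorem \ref{NavierStokes} as a direct specialization of Theorems \ref{wellposed} and \ref{stability} to the zero-temperature regime. The starting observation is that if we set $\theta \equiv 0$, $\theta_0 \equiv 0$, $f\equiv 0$ and (for concreteness) $g\equiv 0$ in \eqref{BouEq1}, the temperature equation collapses trivially and the velocity equation becomes exactly \eqref{NSE}; conversely, every mild solution $\hat u$ of \eqref{NSE} is the first coordinate of the Boussinesq mild solution $(\hat u, 0)$ in this degenerate situation. Thus, periodic NSE solutions can be recovered as periodic Boussinesq solutions of the form $(u,0)$.

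For assertion (i), I would re-run the fixed-point argument from the proof of Theorem \ref{wellposed} with $g \equiv 0$ and $f \equiv 0$, but restrict the contraction $\Phi$ to the closed subspace $\mathcal{Z} \subset H_{p,\infty}$ of $T$-periodic pairs whose second (temperature) component vanishes identically. The crucial invariance $\Phi(\mathcal{Z}) \subset \mathcal{Z}$ follows because, for $(v,0) \in \mathcal{Z}$, the auxiliary linear system \eqref{Linonho} has zero forcing in its second row, so by the uniqueness clause of Theorem \ref{PeriodicLinearCasse} its unique $T$-periodic mild solution must have zero temperature component. The contraction estimates from Theorem \ref{wellposed} persist on $\mathcal{Z}$ verbatim, yielding a unique fixed point $(\hat u, 0)$ in a small ball of $\mathcal{Z}$ as soon as $\|F\|_{BC(\mathbb{R}_+, \mathcal{M}^\sigma_{p/2,\infty,\lambda})}$ is small enough; its first coordinate $\hat u$ is the desired periodic NSE solution with the required unique initial datum $\hat u_0$.

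For assertion (ii), I would apply Theorem \ref{stability} to the pair of Boussinesq solutions $(\hat u, 0)$ and $(v, 0)$ with $g \equiv \omega \equiv 0$ and $\hat\theta_0 = \xi_0 = 0$. Every smallness hypothesis involving $g$, $g-\omega$, or the temperature difference is then trivially met, and the remaining smallness requirements on $\|\hat u_0 - v_0\|_{p,\infty,\lambda}$ and on $\|\hat u\|_{H_{p,\infty}}$ (already ensured by (i)) match our assumptions. The parameter constraints $\tfrac{1}{p} < \tfrac{1}{b} + \tfrac{1}{r} < \min\{\tfrac{2}{p} + \tfrac{1}{q}, 1\}$ and $r > q/(q-1)$ are satisfied by the choice $r = q$ and $b = p$: indeed $p, q > 2$ gives $\tfrac{1}{p} + \tfrac{1}{q} < 1$ and $q > 2 > q/(q-1)$. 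The norm on ${\bf X}_{q,\lambda;r,\lambda}$ reduces, via the vanishing temperature components, to the first-coordinate norm on $\mathcal{M}^\sigma_{q,\infty,\lambda}$, yielding exactly $\sup_{t>0} t^{\alpha/2} \|\hat u(\cdot,t) - v(\cdot,t)\|_{q,\infty,\lambda} < C$ with $\alpha = 1 - p/q$.

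The only genuine subtlety is verifying the invariance $\Phi(\mathcal{Z}) \subset \mathcal{Z}$ cleanly through the uniqueness of the periodic mild solution from Theorem \ref{PeriodicLinearCasse}; once that is in place, no new analytic estimates are required and the rest is a routine reduction via parameter bookkeeping.
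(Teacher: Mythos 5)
Your reduction is correct and is essentially the paper's own route: the paper proves Theorem \ref{NavierStokes} simply by invoking Theorems \ref{wellposed} and \ref{stability} in the zero-temperature case $\theta\equiv 0$, $g\equiv 0$, $f\equiv 0$. Your added details — the invariance of the zero-temperature subspace via the uniqueness clause of Theorem \ref{PeriodicLinearCasse}, and the admissible parameter choice $r=q$, $b=p$ for the stability hypotheses — are exactly the bookkeeping the paper leaves implicit, so no further comment is needed.
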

\subsection*{Acknowledgment}
 This research is funded by Thuyloi University under Grant Number TLU.STF.23-08.

\end{document}